\newtheorem{proposition}{Proposition}[section]
\newtheorem{definition}{Definition}[section]
\newtheorem{lemma}[definition]{Lemma}
\newtheorem{remark}[definition]{Remark}
\newtheorem{theorem}[definition]{Theorem}
\newtheorem{corollary}[definition]{Corollary}
\newcommand{\lp}{\left(}
\newcommand{\rp}{\right)}
\newcommand{\lc}{\left\{}
\newcommand{\rc}{\right\}}
\newcommand{\der}{\partial}
\newcommand{\bra}{\langle}
\newcommand{\ket}{\rangle}
\newcommand{\R}{\mathds{R}}      
\newcommand{\N}{\mathds{N}}      
\newcommand{\F}{\mathds{F}}
\newcommand{\Flder}{\rightarrow}
\newcommand{\proa}{A^*G \mbox{$\;$}_{\tau^*} \kern-3pt\times_\alpha
G \mbox{$\;$}_\beta \kern-3pt\times_{\tau^*} A^*G}
\newcommand{\tf}{\tilde f}
\newcommand{\Arg}{(t,x,\dot x)}
\newcommand{\Argt}{(t,x(t),\dot x(t))}
\newcommand{\Argo}{(t_0,x_0,\dot x_0)}
\begin{document}

\title{On the discretization of nonholonomic dynamics in $\R^n$}

\author[F. Jim\'enez]{Fernando Jim\'enez}
\address{F. Jim\'enez: Zentrum Mathematik der Technische Universit\"at M\"unchen, D-85747 Garching bei M\"unchen, Germany} \email{fjimenez@ma.tum.de}

\author[J. Scheurle]{J\"urgen Scheurle}
\address{J. Scheurle: Zentrum Mathematik der Technische Universit\"at M\"unchen, D-85747 Garching bei M\"unchen, Germany} \email{scheurle@ma.tum.de}

\thanks{This research was supported by the DFG Collaborative Research Center TRR 109, ``Discretization in Geometry and Dynamics''. 
}

\keywords{Nonholonomic mechanics, discretization as perturbation, geo\-me\-tric integration, discrete variational calculus, ordinary differential equations, differential algebraic equations.}

\subjclass[2000]{34C15; 37J15; 37N05; 65P10; 70F25.}

\maketitle

\begin{abstract}
In this paper we explore the nonholonomic Lagrangian setting of mechanical systems in local coordinates on finite-dimensional configuration manifolds. We prove existence and uniqueness of solutions by reducing the basic equations of motion to a set of ordinary differential equations on the underlying distribution manifold $D$. Moreover, we show that any $D-$preserving discretization may be understood as beeing generated by the exact evolution map of a time-periodic non-autonomous perturbation of the original continuous-time nonholonomic system. 
By means of discretizing the corresponding Lagrange-d'Alembert principle, we construct geometric integrators for the original nonholonomic system. We give precise conditions under which these integrators generate a discrete flow preserving the distribution $D$. Also, we derive corresponding consistency estimates. Finally, we carefully treat the example of the nonholonomic particle, showing how to discretize the equations of motion in a reasonable way, particularly regarding the nonholonomic constraints. 
The exploration in this paper lays the ground to analyze the dynamics of appropriate discretizations of nonholonomic mechanical systems in the Lagrangian framework  and to relate that dynamics to the dynamics of the original nonholonomic systems. We postpone this analysis to a series of forthcoming papers.   
\end{abstract}

\section{Introduction}

Nonholonomic systems play an important role in Mechanics. This kind of systems are characterized by so-called nonholonomic constraints, i.e. cons\-traints involving both configuration as well as velocity variables, and which can not be integrated to {\it purely} configuration-dependent constraints (in this case the constraints are called holonomic). Geometrically, nonholonomic constraints, when they are linear in the velocities, may be understood as a distribution $D$ on the tangent bundle of a given configuration  manifold $Q$, i.e. $D\subset TQ$. The dynamical behavior of nonholonomic systems, for instance related to perfect rolling motion, substantially differs from that of holonomic ones in various aspects. While the energy is conserved in nonholonomic systems, this is not necessarily true for other quantities which are conserved in the holonomic case. Usually there is no invariant Poisson bracket associated with nonholonomic systems. Moreover, volume forms in the Lagrangian phase space are not invariant. The corresponding equations of motion, termed {\it nonholonomic} are derived by the Lagrange-d'Alembert principle. We point out, that different equations, termed {\it vakonomic}, result from a genuine variational principle. These do not describe the motion of a nonholonomically constrained mechanical systems correctly (but others, such as underactuated control systems, etc.).  

Many authors have recently shown new interest in the nonholonomic theory and also in its
relation to recent developments in control theory and robotics. The main innovation is that nonholonomic systems are studied from a geometric perspective; see \cite{Bl,BlKrMaMu1996,CCMD,Cort,Koiller,LD} and references therein. Due to its practical importance, nonholonomic mechanics has also attracted a lot of attention by people from the numerical integration community.

Within the general framework of geometric integration (\cite{Hair}), a primary requirement for a discretized dynamical system is a reasonable correspondence of principal qualitative features of the dynamics, which is partly achieved by respecting geometric structural properties of the original continuous-time system. However, it is in general impossible to achieve a complete qualitative and quantitative correspondence (see \cite{GeMar} for a formal statement in this regard). Thus, any practical integrator is likely to disrespect some of the qualitative features of the original system.

Recent works, such as \cite{CoMa,FeZen,SDD1,SDD2,SJD,IMMM,KFMD,McPer} have introduced numerical integrators for
nonholonomic systems with very good energy behavior and preservation properties such as the
preservation of the discrete nonholonomic momentum map in case of a system with symmetry. The approaches in most of these references are based on the ideas of \cite{MarsdenWest,Mose}, where the continuous Lagrange-d'Alembert principle on $TQ$ is replaced by discrete Lagrange-d'Alembert principles on the discrete phase space $Q\times Q$. Of special interest are the seminal works on nonholonomic integration \cite{CoMa,McPer}, where a discrete version of the Lagrange-d'Alembert principle is proposed by introducing a proper discretization of the nonholonomic distribution, i.e. $D_d\subset Q\times Q$, which provides integrators with important structural preservation properties. Nevertheless, as just mentioned, this nice behaviour does not imply a {\it complete} agreement between continuous and discrete counterparts. Since the principal geometric object describing nonholonomic constraints is the distribution $D$, in this paper we focus on integrators which {\it exactly} respect the original continuous distribution $D$ or a certain perturbation of that. 

It turns out that  this $D-$preservation {\it is not} obvious from the discretization of the Lagrange-d'Alemebert principle.  This raises the question about the inavoidable discrepancies introduced by discretizating the dynamics, even if the latter is performed in some kind of structure-preserving fashion. It is needless to mention that this is a central and fundamental question for all kinds of numerical investigations, especially concerning the long-term evolution of dynamical systems. Regarding this issue, we refer to \cite{FiSch} where a positive answer to the following question is given: Is it possible to embed a numerical scheme approximating the continuous-time flow of a set of autonomous ordinary differential equations (ODE) into the time evolution corresponding to a non-autonomous perturbation of the original autonomous ODE? For convenience, we recall this result in proposition \ref{propoDisc} of the present paper. It  may be phrased as follows: {\it Any $p-$th order discretization of an autonomous ODE can equivalently be viewed as the time$-\epsilon$ period map of a suitable $\epsilon-$periodic non-autonomous perturbation of the original ODE (where $\epsilon$ is the fixed discretization lenght).} 

In the present paper we apply this result to the case of nonholonomic systems in $Q=\R^n$. For that purpose, we first rewrite the basic equations of motion as an appropriate set of ODEs in local coordinates. This turns out to be possible under some regularity conditions on the Lagrangian function and the distribution manifold, which we assume throughout the paper. Thus, we are able to apply the result from \cite{FiSch} to the derived ODE system leading to the general result that any $D-$preserving discretization of an autonomous nonholonomic problem 
may be understood as being generated by the exact evolution map of a time-periodic non-autonomous continuous-time perturbation of the original system. Moreover, by means of discretizing the corresponding Lagrange-d'Alembert principle, cf. \cite{CoMa,McPer}, we construct geometric integrators for the original nonholonomic system. We give precise conditions under which the resulting discretization schemes preserve the distribution $D$. In this case, a geomteric integrator will be called {\it velocity nonholonomic integrator}. We provide two specific examples of such an integrator, apply these to the case of {\it simple mechanical systems}, and study consistency properties with respect to the continuous-time dynamics. Finally, we consider the model of the nonholonomic particle to illustrate the theory.

\medskip

The paper is structured as follows: in $\S$\ref{NonHoloProblem} we introduce the notion of a Lagrangian nonholonomic problem in $\R^n$ with linear velocity constraints and show how it can be undertstood as a set of differential algebraic equations (DAE). For later use, we present a proof for the existence and uniqueness of solutions of a non-autonomous generalization of this system following \cite{RigidBodyDAE}. Moreover, it is proved in proposition \ref{h} that the nonholonomic DAE can be rewritten as an ODE on the constraint manifold $D$  (distribution $D$) 
underlying the nonholonomic setting. $\S$\ref{Interpo} is devoted to the exploration of the possibility of interpolating two points belonging to $D$ by a curve within $D$ itself. This fact is proved in proposition \ref{InterpolationD} taking advantage of coordinates on $D$ suggested by using an appropriate Ehresmann connection to trivialize the corresponding bundle structure. In $\S$\ref{DLNP} we first recall the above mentioned result by Fiedler and Scheurle (proposition \ref{propoDisc}). Then, this result is generalized to the present case, see corollary \ref{Coro}. 
Moreover, we introduce the notion of discretization of a  nonholonomic flow and use this together with the results obtained in $\S$\ref{Interpo} to explore the perturbation of the original nonholonomic system caused by that kind of discretizations; see remark \ref{LargeRemark}. Section \S\ref{section2} is devoted  to the construction of {\it velocity nonholonomic integrators}. This notion is introduced in definition \ref{VNI}. A special type of such integrators is developed by means of the discretization of the Lagrange-d'Alembert principle established in \cite{CoMa,McPer}. 
In proposition \ref{propoDcont} we prove that those inegrators are $D-$preserving in some sense. Using the results of the previous section, in $\S$\ref{examples}, for two specific examples of such integrators, the order of consistency is carefully explored with respect to the class of {\it simple mechanical systems}; see propositions \ref{firstorder} and \ref{secondorder}. Moreover, the case of the {\it nonholonomic particle} is considered as an elucidating toy model. In particular, this system suggests the extension of the theory proposed in $\S$\ref{DLNP}, such that a perturbation of the constraints due to discretization is allowed. Such an extension is finally presented in remark \ref{deformation}.   

Throughout the paper, we use Einstein's convention for the summation over repeated indices. The proofs of some known results which are important for the understanding of our main results are included for convenience of the reader; it will be properly highlighted.

\section{The Lagrangian nonholonomic problem}\label{NonHoloProblem}

Mathematically, the Lagrangian nonholonomic setting can be described as follows. We shall start with a configuration manifold $Q$, which is assumed to be an $n$-dimensional differentiable manifold with local coordinates denoted by $q^{i},\hspace{1mm} i=1,...,n$, and a non-integrable constant-rank distribution $D$ on $Q$ that describes the linear nonholonomic constraints. We can consider this distribution $D$ as a vector subbundle of the tangent bundle $TQ$ (velocity phase space) of the configuration manifold. Locally, the linear constraints are written as follows:
\begin{equation}\label{LC}
\phi^{\alpha}\lp q,\dot q\rp=\mu^{\alpha}_{i}\lp q\rp\dot q^{i}=0,\hspace{2mm} 1\leq \alpha\leq m,
\end{equation}
where $\mbox{rank}\lp D\rp=n-m$. The annihilator $D^{\circ}$ is locally given by
\[
D^{\circ}=\mbox{span}\lc\mu^{\alpha}=\mu_{i}^{\alpha}(q)\,dq^{i};\hspace{1mm} 1\leq \alpha\leq m\rc,
\]
where the one-forms $\mu^{\alpha}$ are supposed to be linearly independent.

In addition to the distribution, we need to specify the dynamical evolution of the system, for example by choosing a Lagrangian function $L:TQ\Flder\R$. In nonholonomic mechanics, the procedure leading from the Newtonian point of view to the Lagrangian one is given by the Lagrange-d'Alembert principle. This principle says that a curve $q:I\subset \R\Flder Q$  is an admissible motion of the system if
\[
\delta\int^{t_1}_{t_0}L\lp q\lp t\rp, \dot q\lp t\rp\rp dt=0
\]
with respect to all variations such that $\delta q\lp t\rp\in D_{q\lp t\rp}$, $t_0\leq t\leq t_1$ and the fixed end point condition is satisfied, and if the velocity of the curve itself satisfies the constraints. It is remarkable that the Lagrange-d'Alembert principle is not variational since we are imposing the constraints on the curve {\it after} the extremization. Thus, one may consider the intrinsic data defining the Lagrangian nonholonomic problem to be given by the triple $(Q,L,D)$. Using Lagrange-d'Alembert's principle, we arrive at the nonholonomic equations, which in coordinates read 
\begin{subequations}\label{LdAeqs}
\begin{align}
\frac{d}{dt}\lp\frac{\der L}{\der\dot q^{i}}\rp-\frac{\der L}{\der q^{i}}&=\lambda_{\alpha}\,\mu^{\alpha}_{i}(q),\label{Con-1}\\\label{Con-2}
\mu_{i}^{\alpha}(q)\,\dot q^{i}&=0,
\end{align}
\end{subequations}
where $\lambda_{\alpha},\hspace{1mm} \alpha=1,...,m$ are ``Lagrange multipliers''. The right-hand side of equation (\ref{Con-1}) represents the reaction forces due to the constraints, and equations (\ref{Con-2}) represent the constraints themselves.


\subsection{Local existence and uniqueness of solutions}
In this work we are mainly concerned with the case $Q=\R^n$, so we will assume in the following that we deal with this particular configuration manifold. However, some of the results can be extended to a general manifold $Q$, a fact that will be pointed out at places in the paper. 

The equations \eqref{LdAeqs}, 
\begin{subequations}\label{NhDAE1}
\begin{align}
\frac{\der^2L}{\der\dot q^i\der\dot q^j}\,\ddot q^j&=\frac{\der L}{\der q^i}-\frac{\der^2L}{\der\dot q^i\der q^j}\,\dot q^j�+\lambda_{\alpha}\,\mu^{\alpha}_i(q),\label{NhDAE1a}\\
\mu_{i}^{\alpha}(q)\,\dot q^{i}&=0,\label{NhDAE1c}
\end{align}
\end{subequations}
together with the initial conditions 
\begin{equation}\label{InitialData}
q(t_0)=q_0\in\R^n,\,\,\,\dot q(t_0)=\dot q_0\in\R^n,
\end{equation}
represent a set of $n+m$ differential-algebraic equations in $\R^{n}\times\R^m$ (called nonholonomic DAEs subsequently). We are interested in the existence and uniqueness of solutions $(q,\lambda):I\subset\R\Flder\R^n\times\R^m$ of \eqref{NhDAE1} and \eqref{InitialData}. In order to address this issue, we refer to the following general result in \cite{RigidBodyDAE}. We slightly modify our notation to state theorem \ref{EUDAETh}. In particular, we denote the configuration variables by $x$ here, in order to stick to the notation in the book 
by Rabier and Rheinboldt \cite{RigidBodyDAE}.

Consider the general non-autonomous DAE
\begin{subequations}\label{NonAutDAE}
\begin{align}
A(x)\ddot x&=F(t,x,\dot x)+\lp\nabla_{\dot x}\varphi(t,x,\dot x)\rp^{T}\lambda,\label{NonAutDAEa}\\
\varphi(t,x,\dot x)&=0,\label{NonAutDAEb}
\end{align}
\end{subequations}
where $\lambda\in\R^m$, $x\in\R^n$, $\nabla_{\dot x}=\frac{\der}{\der\dot x}$ and $(t,x,\dot x)$ denotes an arbitrary element of $\R\times\R^n\times\R^n$, while $(t,x(t),\dot x(t))$ refers to a curve $t\mapsto x(t)$ with $\dot x(t)=(dx/dt)(t)$. The coefficient functions
\begin{equation}\label{Coeff}
\lc
\begin{array}{l}
A:\R^n\Flder L(\R^n,\R^n),\\
F:\R\times\R^n\times\R^n\Flder\R^n,\\
\varphi:\R\times\R^n\times\R^n\Flder\R^m,\,\,m\leq n,
\end{array}
\rc
\end{equation}
are assumed to be defined and smooth in some appropriate open sets. With these ingredients the following general solvavility result can be established
\begin{theorem}\label{EUDAETh}
Suppose that, in \eqref{NonAutDAE}, the coefficient functions \eqref{Coeff} are defined and smooth in some neighborhood of a point $(t_0,x_0,\dot x_0)\in\R\times\R^n\times\R^n$ and that
{\rm
\begin{equation}\label{InitialCoeff}
\lc
\begin{array}{l}
(i)\,\,\,\,\,\,\,\,\,\bra A(x_0)z,z\ket,\,\,\forall\,z\in\mbox{ker}\,\nabla_{\dot x}\varphi(t_0,x_0,\dot x_0),\,\,z\neq 0,\\
(ii)\,\,\,\,\,\,\,\,\varphi(t_0,x_0,\dot x_0)=0,\\
(iii)\,\,\,\,\,\,\mbox{rank}\,\nabla_{\dot x}\varphi(t_0,x_0,\dot x_0)=m.
\end{array}
\rc
\end{equation}
}
Then there exists a $\delta>0$ such that \eqref{NonAutDAE} has a unique solution $(x(t),\lambda(t))$ for $|t-t_0|<\delta$ that satisfies the initial condition
\begin{equation}\label{InitialDataNonAutDAE}
x(t_0)=x_0,\,\,\,\,\,\dot x(t_0)=\dot x_0.
\end{equation}
Along this solution the full-rank condition
{\rm
\begin{equation}\label{FullRank}
\mbox{rank}\,\nabla_{\dot x}\,\varphi(t,x,\dot x)=m.
\end{equation}
}
holds for $|t-t_0|<\delta$.
\end{theorem}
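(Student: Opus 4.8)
The plan is to perform an \emph{index reduction}: I differentiate the algebraic constraint \eqref{NonAutDAEb} once with respect to $t$ in order to produce an equation containing $\ddot x$, and then combine this with the dynamical equation \eqref{NonAutDAEa} into a single linear system for the pair $(\ddot x,\lambda)$. If the coefficient matrix of this system is invertible near $\Argo$, I can solve for $\ddot x$ and $\lambda$ as smooth functions of $\Arg$, recasting the DAE as an explicit second-order ODE to which the classical Picard--Lindel\"of theorem applies. Differentiating \eqref{NonAutDAEb} along a curve, and abbreviating $B:=\nabla_{\dot x}\varphi\Arg$, equation \eqref{NonAutDAEa} together with the differentiated constraint take, for fixed $\Arg$, the saddle-point form
\begin{equation}\label{SaddleSystem}
\lp\begin{array}{cc} A(x) & -B^{T}\\[1mm] B & 0\end{array}\rp\lp\begin{array}{c}\ddot x\\[1mm] \lambda\end{array}\rp=\lp\begin{array}{c} F\Arg\\[1mm] -\der_t\varphi\Arg-\nabla_x\varphi\Arg\,\dot x\end{array}\rp,
\end{equation}
a linear system in $\R^{n}\times\R^{m}$.

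The heart of the argument is to show that the $(n+m)\times(n+m)$ matrix in \eqref{SaddleSystem} is invertible at $\Argo$. Suppose $(u,v)\in\R^{n}\times\R^{m}$ lies in its kernel, that is $A(x_0)u=B^{T}v$ and $Bu=0$, where now $B=\nabla_{\dot x}\varphi\Argo$. Then $u\in\ker B$, and pairing the first relation with $u$ gives $\bra A(x_0)u,u\ket=\bra B^{T}v,u\ket=\bra v,Bu\ket=0$. By hypothesis (i) the quantity $\bra A(x_0)z,z\ket$ is strictly positive for every nonzero $z\in\ker\nabla_{\dot x}\varphi\Argo=\ker B$, so necessarily $u=0$. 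It then follows that $B^{T}v=0$, and since hypothesis (iii) asserts that $B$ has rank $m$, its transpose $B^{T}$ is injective and hence $v=0$. Thus the matrix is invertible. Note that both regularity conditions (i) and (iii) enter this step in an essential way, and that no symmetry of $A$ is needed.

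By continuity of the determinant, the matrix in \eqref{SaddleSystem} stays invertible for $\Arg$ in a neighborhood of $\Argo$; there Cramer's rule yields smooth functions $G$ and $\Lambda$ with
\begin{equation}\label{ReducedODE}
\ddot x=G\Arg,\qquad\lambda=\Lambda\Arg.
\end{equation}
The second-order ODE in \eqref{ReducedODE} rewrites as a first-order system for $(x,\dot x)$ with smooth right-hand side, so Picard--Lindel\"of furnishes a unique solution $x(t)$ on an interval $|t-t_0|<\delta$ with $x(t_0)=x_0$, $\dot x(t_0)=\dot x_0$, and setting $\lambda(t):=\Lambda\Argt$ satisfies \eqref{NonAutDAEa} by construction. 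To recover the genuine \emph{algebraic} constraint \eqref{NonAutDAEb}, I observe that $G$ was built precisely so that $\frac{d}{dt}\varphi\Argt=0$ along the solution; combined with hypothesis (ii), namely $\varphi\Argo=0$, this forces $\varphi\Argt\equiv 0$. Uniqueness is immediate, since any solution of \eqref{NonAutDAE} also satisfies the differentiated constraint, hence the ODE $\ddot x=G\Arg$, and so coincides with $x(t)$ by ODE uniqueness, while $\lambda$ is pinned down by $\Lambda$. Finally, \eqref{FullRank} holds at $t_0$ by (iii); since the rank is lower semicontinuous and $m$ is maximal (as $m\leq n$), it persists for $|t-t_0|<\delta$ after shrinking $\delta$ if necessary. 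The only genuine obstacle is the invertibility of the saddle-point matrix; the rest is routine ODE theory.
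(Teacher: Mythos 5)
Your proof is correct, and it follows a genuinely different route from the paper's (which reproduces the argument of Rabier--Rheinboldt). The paper first \emph{eliminates} the multiplier: it applies the orthogonal projection $P\Arg$ onto $\mbox{ker}\,\nabla_{\dot x}\varphi\Arg$ to \eqref{NonAutDAEa}, which annihilates the term $\lp\nabla_{\dot x}\varphi\rp^T\lambda$, couples the projected equation \eqref{Proj} with the differentiated constraint \eqref{ConsDiff}, and proves that $s\mapsto\lp P\Argo P\Arg A(x)\,s\,,\,\nabla_{\dot x}\varphi\Arg\,s\rp$ is an isomorphism onto $\mbox{ker}\,\nabla_{\dot x}\varphi\Argo\times\R^m$; the multiplier $\lambda(t)$ is recovered only afterwards, in the converse step, via the identity $\mbox{ker}\,P=\mbox{rge}\,\nabla_{\dot x}\varphi^T$. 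You instead keep $(\ddot x,\lambda)$ together and invert the full $(n+m)\times(n+m)$ saddle-point matrix by the standard kernel argument: positive definiteness of $A(x_0)$ on $\mbox{ker}\,B$ (hypothesis (i)) forces $u=0$, and injectivity of $B^T$ (hypothesis (iii)) forces $v=0$. The two invertibility claims are equivalent --- yours concerns the unreduced system, the paper's the system with $\mbox{rge}\,B^T$ projected out --- and from that point on the proofs coincide: solve the resulting smooth second-order ODE by Picard--Lindel\"{o}f, and use hypothesis (ii) together with $\frac{d}{dt}\varphi\equiv 0$ along solutions to recover \eqref{NonAutDAEb}; your handling of the full-rank condition \eqref{FullRank} (maximal rank is an open condition) is also the intended one. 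What your version buys: $\lambda=\Lambda\Arg$ comes out of the \emph{same} linear solve as $\ddot x$, smoothly and uniquely, so the converse direction needs no separate range argument, and the roles of (i) and (iii) are displayed very explicitly. What the paper's version buys: the projected equation is exactly the ``ODE on the constraint manifold'' viewpoint that the rest of the paper builds on --- indeed the matrix $\lp C^{\alpha\beta}\rp$ of lemma \ref{lemmaC}, used in proposition \ref{h}, is precisely the Schur complement $B\,A^{-1}B^T$ of your saddle-point matrix, so the two formulations meet again there. (One cosmetic point: hypothesis (i) as printed omits the ``$>0$''; like the paper's own proof, you correctly read it as positive definiteness of $A(x_0)$ on the kernel.)
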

Due to its importance for future purposes and for convenience of the reader we include the proof given in \cite{RigidBodyDAE} with slight modifications.
\begin{proof}
From \eqref{InitialCoeff} $(iii)$ it follows that rank $\nabla_{\dot x}\varphi(t,x,\dot x)=m$ in some open neighborhood of $(t_0,x_0,\dot x_0)$ in $\R\times\R^n\times\R^n$ (indeed, as we will see shortly, in the nonholonomic case $\varphi$ does not depend on $t$ and $\nabla_{\dot x}\varphi$ is full-rank everywhere in $\R^n\times\R^n$ by construction). This implies that dim ker $\nabla_{\dot x}\varphi=n-m$; whence the orthogonal projection $P\Arg$ of $\R^n$ onto ker $\nabla_{\dot x}\varphi\Arg$ is a smooth function in a neighborhood of $\Argo$.

Suppose that $x=x(t)\,,\,\lambda=\lambda(t)$ is a solution of \eqref{NonAutDAE} satisfying \eqref{InitialDataNonAutDAE}, and defined for all $t$ in some sufficiently small interval $\Delta=|t-t_0|$ around $t_0$. Then, in $\Delta$, we have
\begin{equation}\label{Proj}
P\Argt A(x(t))\ddot x(t)=P\Argt F\Argt.
\end{equation}
Moreover, the constraint $\varphi\Argt=0$ holds in $\Delta$; by differentiation we obtain
\begin{equation}\label{ConsDiff}
\nabla_{\dot x}\varphi\Argt\ddot x(t)=-\nabla_x\varphi\Argt\dot x(t)-\nabla_t\varphi\Argt.
\end{equation}
Note that $P\Argo$ is a linear isomorphism from ker $\nabla_{\dot x}\varphi\Arg$ onto ker $\nabla_{\dot x}\varphi\Argo$ for $\Arg$ close to $\Argo$. Thus, in $\Delta$,\eqref{Proj} is equivalent to
\begin{eqnarray}\label{proj0}
&P\Argo P\Argt A(x(t))\ddot x(t)=\\
&P\Argo P\Argt F\Argt\nonumber
\end{eqnarray}
Next, it is proved that the linear operator $\R^n\Flder \mbox{ker}\,\nabla_{\dot x}\varphi\Argo\times\R^m$ given by
\begin{equation}\label{Operator}
s\mapsto (P\Argo P\Arg A(x)\,s\,,\,\nabla_{\dot x}\varphi\Arg\,s),\,\,s\in\R^n,
\end{equation} 
is a linear isomorphism for $\Arg$ close to $\Argo$. By continuity it is enough to show the claim for $\Arg=\Argo$. As proof's strategy we calculate the kernel of the operator in order to show that it is null. Therefore, suppose $P\Argo A(x_0)\,s=0$ and $\nabla_{\dot x}\varphi\Argo\,s=0.$ The latter equation means that $s\in\,\mbox{ker}\,\nabla_{\dot x}\varphi\Argo$, and hence the former yields
\[
\bra P\Argo A(x_0)\,s\,,\,s\ket=\bra A(x_0)\,s\,,\,s\ket=0,
\]
where the symmetry of the projector is used. But, since $A$ is assumed to be possitive definite by \eqref{InitialCoeff} $(i)$, $s=0$ and the claim follows.

With this, we see that for $\Delta$ small enough the system of equations \eqref{proj0} and \eqref{ConsDiff} can be solved for $\ddot x(t)$ by
\begin{equation}\label{soODE}
\ddot x(t)=\Gamma\Argt,
\end{equation}
where $\Gamma$ is a smooth function. Hence, the assumed solution $x(t)$ of DAE \eqref{NonAutDAE} solves the second-order ODE \eqref{soODE} on $\R^n$ uniquely (the initial conditions \eqref{InitialDataNonAutDAE} fix a particular solution of \eqref{soODE}). Finally, since $\nabla_{\dot x}\varphi\Argt^{T}$ is injective, $\lambda(t)$ can be determined uniquely from \eqref{NonAutDAEa}.

Conversely, it has to be proved that the unique solution of \eqref{soODE} determined by \eqref{InitialDataNonAutDAE} indeed solves \eqref{NonAutDAE}. As a matter of fact, since the right hand side of \eqref{soODE} is the solution of the system \eqref{proj0}, \eqref{ConsDiff}, this solution must also satisfy the relations \eqref{ConsDiff} and \eqref{Proj}. In particular, by \eqref{Proj} we have
\begin{eqnarray*}
A(x(t))\ddot x(t)-F\Argt &\in& \mbox{ker}\,P\Argt\\
                         &=& \lp\mbox{ker}\,\nabla_{\dot x}\varphi\Argt\rp^{\perp}\\
                         &=&\mbox{rge}\, \nabla_{\dot x}\varphi\Argt^T.
\end{eqnarray*}
But every such element can be written in the form $\nabla_{\dot x}\varphi\Argt^T\lambda$ for a unique $\lambda=\lambda(t)\in\R^m$. This shows that 
\[
A(x(t))\ddot x(t)=F\Argt+\nabla_{\dot x}\varphi\Argt^T\lambda(t).
\]
Finally, \eqref{ConsDiff} ensures that $\varphi\Argt$ is constant, and hence, because $\varphi\Argo$ $=0$ due to \eqref{InitialCoeff} $(ii)$, we obtain $\varphi\Argt=0$. This proves that the solution of \eqref{soODE} indeed solves \eqref{NonAutDAE}.
\end{proof}
Without further assumptions, equations \eqref{NhDAE1} together with the initial conditions \eqref{InitialData} do not fit in the general setting established by theorem \ref{EUDAETh}. Therefore, in the following we restrict ourselves to Lagrangian functions, the matrix of second partial derivatives of which, i.e. 
\begin{equation}\label{metric}
\lp\frac{\der^2L}{\der\dot q^j\der\dot q^i}\rp,
\end{equation}
only depends on the configuration variables, and is regular ({\it regular} Lagrangians) as well as positive-definite ({\it normal} Lagrangians). In other words, \eqref{metric} represents a Riemannian metric on $Q$. Needless to say, these assumptions are completely consistent with most of the Lagrangian functions showing up in mechanics, for instance in the case of simple mechanical systems that we consider in the examples later on. Thus, comparing \eqref{NonAutDAE} and \eqref{NhDAE1} we set
\begin{equation*}
A_{ji}=\frac{\der^2L}{\der\dot q^j\der\dot q^i},\,\,\,\,\,\,\, F_i=-\frac{\der^2L}{\der q^j\der\dot q^i}\,\dot q^j+\frac{\der L}{\der q^i},\,\,\,\,\,\,\,\varphi^{\alpha}=\mu^{\alpha}_i(q)\,\dot q^i,
\end{equation*}
in order to ensure the local existence and uniqueness of the solutions $(q(t),$ $\lambda(t))$ of \eqref{NhDAE1}, since
\[
\mbox{rank}\,\nabla_{\dot q^i}\,\varphi^{\alpha}=\mbox{rank} (\mu^{\alpha}_i(q))=m,
\]
which holds by the definition of the nonholonomic problem, and as long as the initial condition \eqref{InitialData} respects the constraints \eqref{NhDAE1c}. As an important remark, note that theorem \ref{EUDAETh} ensures the solvability of more general, non-autonomous systems than the nonholonomic DAEs which we consider here. This fact will play a role in subsequent sections.

\subsection{Nonholonomic DAEs as ODEs on a manifold}
Our purpose in this subsection is to transform the DAE \eqref{NhDAE1} into an ODE on a manifold (see \cite{DAERheinboldt2,GeomTreat,Reich1,Reich2,DAERheinboldt1} for other approaches and similar constructions).
Generally speaking, if $\mathcal{M}$ is a submanifold of $\R^N$, i.e. $\mathcal{M}\subset\R^N$, and $y(t)$ is a differentiable curve contained in it, then (by definition of the tangent space) its time derivative satisfies $\dot y(t)\in T_{y(t)}\mathcal{M}$ for all $t$. On the other hand, a vector field on $\mathcal{M}$ is a $C^1-$mapping $f:\mathcal{M}\Flder\R^N$ such that $f(y)\in T_{y}\mathcal{M}$ for all $y\in\mathcal{M}$. For such a vector field 
\[
\dot y=f(y)
\]
is called a differential equation on the submanifold $\mathcal{M}$, and a function $y:I\Flder\mathcal{M}$ satisfying $\dot y(t)=f(y(t))$ for all $t\in I$ is called integral curve or simply solution of that equation. Now, our purpose is to construct a vector field on the  distribution $D$ that represents the nonholonomic DAE \eqref{NhDAE1}.

Setting $\dot q^i=v^i$, the nonholonomic DAE \eqref{NhDAE1} can be rewritten as
\begin{subequations}\label{NhDAE2}
\begin{align}
\dot q^i&=v^i,\label{NhDAE2a}\\
\frac{\der^2L}{\der v^j\der v^i}\,\dot v^j&=-\frac{\der^2L}{\der q^j\der v^i}\,v^j+\frac{\der L}{\der q^i}+\lambda_{\alpha}\,\mu^{\alpha}_{i}(q),\label{NhDAE2b}\\
\mu^{\alpha}_i(q)\,v^i&=0.\label{NhDAE2c}
\end{align}
\end{subequations}
As mentioned before, the matrix \eqref{metric} is considered to be regular and postive-definite. Denoting $\lp m_{ij}\rp=\lp\frac{\der^2L}{\der v^j\der v^i}\rp$, and its inverse by $\lp m^{ij}\rp$, i.e. $\lp m^{ik}m_{kj}\rp=\lp\delta^i_j\rp$, equations \eqref{NhDAE2} may be rewritten as the following DAE on $\R^{2n + m}$:
\begin{subequations}\label{NhDAE3}
\begin{align}
\dot y&=f(y)+\lambda_{\alpha}\lp\begin{array}{c} 0\\m^{-1}\nabla_{v}\varphi^{\alpha}(y)\end{array}\rp,\label{NhDAE3a}\\
\varphi^{\alpha}(y)&=0\label{NhDAE3b},
\end{align}
\end{subequations}
where $y=(q^i,v^i)^T\in\R^{2n}$, $\lambda=(\lambda_{\alpha})^T\in\R^{m}$, $\varphi^{\alpha}(y)=\mu^{\alpha}_i(q)\,v^i$, $\nabla_{v^i}=\frac{\der}{\der v^i}$ (analogously we will denote $\nabla_{q^i}=\frac{\der}{\der q^i}$) and
\begin{equation}\label{f}
f(y)=\lp f_q^i(y),f_v^i(y)\rp^T=\lp v^i\,\,,\,\,-m^{ik}\frac{\der^2L}{\der q^j\der v^k}v^j+m^{ij}\frac{\der L}{\der q^j}\rp^T.
\end{equation}
\begin{remark}
{\rm Note that equations \eqref{NhDAE3} define a very particular DAE strongly depending on the nonholonomic structure, a fact which is taken into account in the definition \eqref{f} of $f(y)$ and in $m^{-1}\nabla_{v}\varphi^{\alpha}(y)=\lp\frac{\der^2L}{\der v^i\der v^j}\rp^{-1}\mu^{\alpha}_j(q).$ }
\end{remark}
Next, we shall prove that \eqref{NhDAE3b} determines a submanifold in $\R^{2n}$. We could omit the following result just by invoking the nonholonomic structure which establishes by construction that $D$ is a smooth submanifold of  $TQ$. Nevertheless, we include it (see \cite{DAERheinboldt2,DAERheinboldt1} for the proof) for later use in remark \ref{deformation} where we allow a deformation of the nonholonomic constraints:
\begin{theorem}\label{Subma}
Let $\phi:U\subset\R^{l}\Flder\R^m$, $1\leq m<l$, be a $C^r-$mapping, $r\geq 1$, on an open set $U\subset\R^{l}$. Then the regularity set
{\rm
\begin{equation}\label{RegularitySet}
\mathcal{R}(\phi,U)=\lc x\in U\,,\,\mbox{rge}\,\nabla\phi(x)=\R^m\rc
\end{equation}
}
is open in $\R^l$, and for $0\in\phi\lp\mathcal{R}(\phi,U)\rp $, the regular solution set
\begin{equation}\label{Submanifold}
M=M(\phi,U)=\lc x\in\mathcal{R}(\phi,U)\,,\,\phi(x)=0\rc
\end{equation}
is a nonempty (sub-)manifold of $\R^l$ of class $C^r$ and dimension $l-m$.
\end{theorem}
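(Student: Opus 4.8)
The plan is to recognize this as the standard regular value (or preimage) theorem and to prove it in two independent pieces: first the openness of the regularity set $\mathcal{R}(\phi,U)$ by a determinant/minor argument, then the submanifold property of $M$ by producing local graph charts via the implicit function theorem. The nonemptiness of $M$ will follow at once from the hypothesis $0\in\phi\lp\mathcal{R}(\phi,U)\rp$.

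For the openness, I would observe that $\nabla\phi(x)$ is the $m\times l$ Jacobian matrix, whose entries $\der\phi^\alpha/\der x^i$ are $C^{r-1}$, hence in particular continuous, functions of $x$. Since $m<l$, the rank of $\nabla\phi(x)$ is at most $m$, so the condition $\mbox{rge}\,\nabla\phi(x)=\R^m$ is equivalent to the rank being exactly $m$, which in turn is equivalent to the nonvanishing of at least one of the finitely many $m\times m$ minors of $\nabla\phi(x)$. Each such minor is a continuous function of $x$ (a polynomial in the entries of $\nabla\phi$), so the set on which it is nonzero is open; $\mathcal{R}(\phi,U)$ is then the intersection of $U$ with the union of these finitely many open sets, hence open in $\R^l$.

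For the submanifold structure I would fix an arbitrary $x_0\in M$. Because $\mbox{rank}\,\nabla\phi(x_0)=m$, after a permutation of the coordinate indices I may assume that the $m\times m$ block built from the last $m$ columns of $\nabla\phi(x_0)$ is invertible. Splitting $x=(u,w)$ with $u\in\R^{l-m}$ and $w\in\R^m$, the implicit function theorem yields an open neighborhood of $x_0$ and a unique $C^r$ map $g$ with $\phi(u,w)=0\Longleftrightarrow w=g(u)$ locally. The graph map $u\mapsto(u,g(u))$ is then a $C^r$ parametrization of $M$ near $x_0$, and its inverse (a coordinate projection) is a $C^r$ chart flattening $M$ onto an open subset of $\R^{l-m}$. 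Since $x_0\in M$ was arbitrary and $M$ is nonempty exactly because $0\in\phi\lp\mathcal{R}(\phi,U)\rp$ supplies some $x\in\mathcal{R}(\phi,U)$ with $\phi(x)=0$, this exhibits $M$ as a $C^r$ submanifold of $\R^l$ of dimension $l-m$.

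I do not anticipate any genuine obstacle, as the result is a textbook consequence of the implicit function theorem; the only point requiring a little care is the bookkeeping around the coordinate permutation chosen at each $x_0$ and the mutual $C^r$-compatibility of the resulting graph charts, which holds automatically because all transition maps are restrictions of coordinate projections composed with the $C^r$ graph functions $g$. Since the statement is invoked later (see remark \ref{deformation}) only to guarantee the existence of the constraint manifold and of $C^r$ charts on it, reproducing these standard graph charts is exactly what is needed.
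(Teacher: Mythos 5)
Your proof is correct. Note, however, that the paper itself does not contain a proof of Theorem \ref{Subma}: it states the result and explicitly defers the proof to the cited works of Rheinboldt and Fink--Rheinboldt, so there is no internal argument to compare yours against. Your two-step argument (openness of $\mathcal{R}(\phi,U)$ via the nonvanishing of some $m\times m$ minor of the continuous Jacobian, then local $C^r$ graph charts from the implicit function theorem, with nonemptiness immediate from $0\in\phi\lp\mathcal{R}(\phi,U)\rp$) is the standard regular value theorem proof and is essentially the argument found in those references, so it serves as a faithful replacement for the omitted proof. The single point worth stating explicitly is that, since $\mathcal{R}(\phi,U)$ is open, the neighborhood produced by the implicit function theorem at a point $x_0\in M$ can be shrunk to lie inside $\mathcal{R}(\phi,U)$; only then does the local zero set of $\phi$ coincide with $M$ itself, so that your graph charts genuinely parametrize $M$ rather than a possibly larger zero set containing non-regular points.
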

Fixing $l=2n$ and $\phi=\varphi$, by that theorem it is straightforward to see that \eqref{NhDAE3b} determines a submanifold $M(\varphi,\R^{2n})\subset\R^{2n}$ since
\[
\mbox{rank}\,\nabla\varphi^{\alpha}=\mbox{rank}\lp\nabla_q\varphi^{\alpha}\,,\,\nabla_v\varphi^{\alpha}\rp=\mbox{rank}\lp \frac{\der\mu^{\alpha}_i}{\der q^j}v^i\,,\,\mu^{\alpha}_j\rp=m,
\]
for any $y\in\R^{2n}$ by the structure of the nonholonomic problem. Of course, in the nonholonomic context this submanifold $M(\varphi,\R^{2n})$ is nothing but the distribution $D$ introduced in $\S$\ref{NonHoloProblem}. When constructing an ODE on $D$ from the nonholonomic DAE, the following lemma will be of main importance. It follows from the fact that both $(m_{ij})$ and $(\mu_i^{\alpha})$ are full-rank matrices; nevertheless we give a geometrical proof which takes into account the structure of our nonholonomic system and which will be relevant in remark \ref{deformation}. 

\begin{lemma}\label{lemmaC}
The $m\times m$ matrix $\lp C^{\alpha\beta}\rp=\lp\mu^{\alpha}_im^{ij}\mu^{\beta}_j\rp$ is invertible.
\end{lemma}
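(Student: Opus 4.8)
The plan is to show that $C=\lp C^{\alpha\beta}\rp$ is symmetric and \emph{positive-definite}, which immediately yields invertibility. The first observation is that $\lp m^{ij}\rp$, being the inverse of the symmetric positive-definite matrix $\lp m_{ij}\rp$ (recall that \eqref{metric} is a Riemannian metric by our standing assumption), is itself symmetric and positive-definite. Hence $\lp m^{ij}\rp$ defines a genuine inner product on covectors at $q$, namely $\bra\omega,\eta\ket_{m^{-1}}:=m^{ij}\omega_i\eta_j$. The key structural remark is then that $C^{\alpha\beta}=\mu^{\alpha}_i m^{ij}\mu^{\beta}_j=\bra\mu^{\alpha},\mu^{\beta}\ket_{m^{-1}}$ is nothing but the \emph{Gram matrix} of the constraint one-forms $\mu^{1},\dots,\mu^{m}$ with respect to this inner product. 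Symmetry is clear from the symmetry of $\lp m^{ij}\rp$.

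The core step is to verify positive-definiteness using the structure of the nonholonomic problem. Given $\lambda=(\lambda_{\alpha})\in\R^m$ with $\lambda\neq 0$, I would set $\omega_i:=\lambda_{\alpha}\mu^{\alpha}_i$, i.e. $\omega=\lambda_{\alpha}\mu^{\alpha}$ is the corresponding element of the annihilator $D^{\circ}_q$. Then
\[
\lambda_{\alpha}\,C^{\alpha\beta}\,\lambda_{\beta}=m^{ij}\omega_i\omega_j=\bra\omega,\omega\ket_{m^{-1}}\geq 0,
\]
with equality if and only if $\omega=0$, by positive-definiteness of $\lp m^{ij}\rp$. But $\omega=\lambda_{\alpha}\mu^{\alpha}=0$ together with the linear independence of the one-forms $\mu^{\alpha}$ (which is part of the definition of the nonholonomic problem in $\S$\ref{NonHoloProblem}) forces $\lambda_{\alpha}=0$ for all $\alpha$, contradicting $\lambda\neq 0$. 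Hence $\lambda_{\alpha}C^{\alpha\beta}\lambda_{\beta}>0$ for every $\lambda\neq 0$, so $C$ is positive-definite and therefore invertible.

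I do not expect a serious obstacle here: the only things that must be in place are that the inverse of a positive-definite matrix is positive-definite and that $\lp\mu^{\alpha}_i\rp$ has full rank $m$ (equivalently, that the $\mu^{\alpha}$ are linearly independent), both of which are guaranteed by the hypotheses. The mildly delicate point, and the one worth phrasing carefully for later use, is that positive-definiteness of $C$ is exactly equivalent to the geometric statement that the metric-orthogonal complement $\sharp\lp D^{\circ}\rp$ of the distribution meets $D$ trivially, i.e. $T_qQ=D_q\oplus\sharp\lp D^{\circ}_q\rp$; this transversality is what allows the Lagrange multipliers to be solved for uniquely and is precisely the feature that must be monitored in remark \ref{deformation}, where the constraint forms are deformed. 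Presenting the argument through this Gram-matrix/orthogonal-decomposition lens, rather than as a bare rank count, is what makes it robust under such perturbations.
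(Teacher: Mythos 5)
Your proof is correct, and it takes a genuinely different route from the paper's. The paper argues geometrically on the tangent side: it introduces the $m$-orthogonal splitting $TQ=D\oplus D^{\perp}$, observes that $D^{\perp}$ is spanned by the vector fields $Z^{\alpha}=m^{ij}\mu^{\alpha}_i\frac{\der}{\der q^j}$ (its equation \eqref{Zlocal}), and then computes the kernel of $C$: any $\omega_{\beta}$ with $C^{\alpha\beta}\omega_{\beta}=0$ satisfies $\bra\mu^{\alpha},\omega_{\beta}Z^{\beta}\ket=0$ for all $\alpha$, which forces $\omega=0$. Notably, the paper itself concedes that the lemma ``follows from the fact that both $(m_{ij})$ and $(\mu_i^{\alpha})$ are full-rank matrices'' --- essentially your Gram-matrix observation --- but deliberately chooses the geometric proof because the objects it introduces ($Z^{\alpha}$ and the splitting $TQ=D\oplus D^{\perp}$) are reused in proposition \ref{h} and remark \ref{deformation}. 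Your cotangent-side argument buys two things the paper's does not state: the stronger conclusion that $C$ is symmetric \emph{positive-definite} (not merely invertible), and a fully rigorous final step --- the paper's claim that $\bra\mu^{\alpha},Z^{\beta}\ket$ ``is different from zero by definition'' is loose (individual pairings may well vanish; the correct statement is that $\omega_{\beta}Z^{\beta}\in D\cap D^{\perp}=\lc 0\rc$, or equivalently your strict positivity of the quadratic form). Conversely, the paper's proof makes explicit the tangent-space data in the form in which the lemma is later exploited; your closing remark about the transversality $T_qQ=D_q\oplus\sharp\lp D^{\circ}_q\rp$ shows you recovered exactly that geometric content from the dual side, so the two proofs are mutually translatable, with yours being the more elementary and the more quantitative of the two.
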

\begin{proof}
Since $TQ$ is equipped with the Riemannian metric $m$ (recall that we are considering $\lp\frac{\der^2L}{\der v^i\der v^j}\rp$ to be regular and postive-definite), we can perform the decomposition $TQ=D\oplus D^{\perp}$. Here, $D^{\perp}$ is determined by the condition
\[
m(Z^{\alpha},Y)=0,\,\,\,\forall\,Y\in D. 
\]
Taking into account that $\bra\mu^{\alpha}\,,\,Y\ket=0$ for any $Y\in D$, where $\bra\cdot,\cdot\ket$ represents the canonical pairing between $TQ$ and $T^*Q$, the local expression of $Z^{\alpha}$ is given by
\begin{equation}\label{Zlocal}
Z^{\alpha}=m^{ij}\mu^{\alpha}_i\frac{\der}{\der q^j}.
\end{equation}
In order to prove that $\lp C^{\alpha\beta}\rp$ is invertible, we proceed to calculate its kernel, that is $\omega_{\beta}\in\R^{m}$ s.t. $C^{\alpha\beta}\omega_{\beta}=0$:
\[
C^{\alpha\beta}\omega_{\beta}=\mu^{\alpha}_im^{ij}\mu^{\beta}_j\omega_{\beta}=\bra\mu^{\alpha}_k dq^k\,,\,\omega_{\beta}m^{ij}\mu^{\beta}_j\frac{\der}{\der q^i}\ket=0.
\]
Taking into account \eqref{Zlocal}, we can write the previous expression as
\[
\omega_{\beta}\bra\mu^{\alpha}\,,\,Z^{\beta}\ket=0.
\]
Since $Z^{\beta}\in D^{\perp}$, $\bra\mu^{\alpha}\,,\,Z^{\beta}\ket$ is different from zero by definition; in consequence, the previous expression vanishes if and only if $\omega_{\beta}=0$. Therefore ker $\lp C^{\alpha\beta}\rp$ is trivial, and the claim is proved.
\end{proof}
We point out that the matrix $\lp C^{\alpha\beta}\rp$ only depends on $q$, a fact that follows from the $q-$dependence of the one-forms $\mu^{\alpha}$ and of the matrix $m$ (assumed from the beginning), and therefore of its inverse. Using this lemma, we can finally construct the vector field on $D$ defining an ODE as formulated in the following proposition.
\begin{proposition}\label{h}
The nonholonomic DAE \eqref{NhDAE3} induces an ODE on $D=M(\varphi,\R^{2n})$ given by
\begin{equation}\label{ODEManifold}
\dot x=h(x),
\end{equation}
where $x\in D\subset\R^{2n}$ and $h(x)\in T_xD\subset T\R^{2n}$ is defined by
\[
h(x):=f(x)+\lambda_{\alpha}(x)\lp\begin{array}{c} 0\\m^{-1}\nabla_{v}\varphi^{\alpha}(x)\end{array}\rp.
\]
$f(x)$ is defined in \eqref{f} and $\lambda_{\alpha}(x)$ is determined by
\begin{small}
\begin{equation}\label{lambdax}
\lambda_{\alpha}(x)=-C_{\alpha\beta}(x)\lp\frac{\der\mu^{\beta}_i(q)}{\der q^j}v^iv^j +\mu^{\beta}_i(q)\,m^{ij}\,\frac{\der L}{\der q^j}-\mu^{\beta}_i(q)\,m^{ik}\frac{\der^2 L}{\der q^j\der v^k}v^j\rp
\end{equation}
\end{small}
where $\lp C_{\alpha\beta}\rp$ is the inverse matrix of $\lp C^{\alpha\beta}\rp$ defined in lemma \ref{lemmaC} and $x=(q^i,v^i)^T$ s.t. $\mu^{\alpha}_i(q)v^i=0$.
\end{proposition}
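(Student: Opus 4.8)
The plan is to carry out an \emph{index reduction} of the DAE \eqref{NhDAE3}: along any solution the algebraic constraint \eqref{NhDAE3b} holds identically in $t$, so its time derivative vanishes as well, and this differentiated (``hidden'') constraint will express the multipliers $\lambda_\alpha$ explicitly in terms of $x=(q,v)$. Once $\lambda_\alpha=\lambda_\alpha(x)$ is available, substituting it back into \eqref{NhDAE3a} produces the vector field $h$ on $D$, after which it remains to check that $h(x)$ is tangent to $D$, so that $\dot x=h(x)$ is a genuine ODE on the submanifold $D=M(\varphi,\R^{2n})$.

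First I would write $\varphi^\alpha(y(t))=\mu^\alpha_i(q(t))\,v^i(t)\equiv 0$ and differentiate with respect to $t$, obtaining
\[
\frac{\der\mu^\alpha_i}{\der q^j}\,\dot q^j\,v^i+\mu^\alpha_i\,\dot v^i=0.
\]
Using the first block $\dot q^j=v^j$ of \eqref{NhDAE3a} together with the $v$-component $\dot v^i=f^i_v(y)+\lambda_\beta\,m^{ik}\mu^\beta_k$ read off from \eqref{NhDAE3a} and \eqref{f}, this becomes
\[
\frac{\der\mu^\alpha_i}{\der q^j}\,v^j v^i+\mu^\alpha_i f^i_v(y)+\mu^\alpha_i m^{ik}\mu^\beta_k\,\lambda_\beta=0.
\]
The coefficient of $\lambda_\beta$ is precisely $C^{\alpha\beta}=\mu^\alpha_i m^{ij}\mu^\beta_j$, the matrix shown to be invertible in Lemma \ref{lemmaC}. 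Multiplying by its inverse $C_{\alpha\beta}$ solves for the multipliers, and inserting the explicit form of $f^i_v$ from \eqref{f} (and using the symmetry $C_{\alpha\beta}=C_{\beta\alpha}$ inherited from the symmetry of $m^{ij}$) reproduces exactly formula \eqref{lambdax}. Since $(C^{\alpha\beta})$ depends smoothly on $q$ and is everywhere invertible, while all remaining ingredients are smooth, $\lambda_\alpha(x)$ is a smooth function on $D$, and hence $h$ is a smooth vector field.

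It then remains to verify the tangency $h(x)\in T_xD=\mbox{ker}\,\nabla\varphi^\alpha(x)$. Computing $\bra\nabla\varphi^\alpha(x)\,,\,h(x)\ket$ reproduces verbatim the left-hand side of the differentiated constraint displayed above, which vanishes by the very choice of $\lambda_\alpha(x)$; thus $h(x)\in T_xD$ for every $x\in D$ and \eqref{ODEManifold} is a well-defined ODE on $D$. I do not anticipate a genuine obstacle here: the essential structural input is that a single differentiation of the velocity-linear constraint already exposes the accelerations, and therefore $\lambda$, \emph{linearly}, with coefficient matrix $C$ whose invertibility is guaranteed by Lemma \ref{lemmaC}. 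The only point requiring care is the index bookkeeping and the observation that the solvability identity for $\lambda$ and the tangency of $h$ are literally two faces of the same equation, so that no independent verification of $h(x)\in T_xD$ is needed beyond the construction of $\lambda_\alpha(x)$ itself.
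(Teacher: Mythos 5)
Your proposal is correct and is essentially the paper's own argument: differentiating the constraint along a solution is, by the chain rule, exactly the paper's perpendicularity condition $\bra\nabla\varphi^{\beta}(y)\,,\,h(y)\ket=0$, and both arguments then solve the same linear system for $\lambda_{\alpha}$ using the invertibility of $\lp C^{\alpha\beta}\rp$ from lemma \ref{lemmaC}, arriving at \eqref{lambdax}. The only difference is presentational --- the paper imposes tangency directly as the defining condition for $\lambda_{\alpha}(x)$, whereas you derive it as the hidden (differentiated) constraint and then observe, correctly, that solvability for $\lambda$ and tangency of $h$ are two faces of the same equation.
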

Note that we view $y$ to be a point belonging to $\R^{2n}$ in \eqref{NhDAE3} while we write  $x$ in \eqref{ODEManifold} to stress the fact that the equation \eqref{ODEManifold} is defined on the submanifold $D\subset\R^{2n}$ and consequently, $x\in D$. We use $y$ in the next proof where we are determining the conditions providing \eqref{ODEManifold}.
\begin{proof}
Note that $\lp C_{\alpha\beta}\rp(x)$ is only $q-$de\-pen\-dent. The equation \eqref{NhDAE3b} determines a submanifold in $\R^{2n}$, the normal vector of which is defined by $\nabla\varphi^{\alpha}$. On the other hand, the right hand side of \eqref{NhDAE3a} defines a vector field on $T\R^{2n}$. With these two ingredients, we can establish the following perpendicularity condition:
\begin{equation}\label{Perp}
\bra\nabla\varphi^{\beta}(y)\,,\,f(y)+\lambda_{\alpha}\lp\begin{array}{c} 0\\m^{-1}\nabla_{v}\varphi^{\alpha}(y)\end{array}\rp\ket=0,
\end{equation}
where the one-form $\nabla\varphi^{\beta}$ is defined in coordinates by $\nabla\varphi^{\beta}=\lp\nabla_q\varphi^{\beta},\nabla_v\varphi^{\beta}\rp=\lp\frac{\der\varphi^{\beta}}{\der q^i},\frac{\der\varphi^{\beta}}{\der v^i}\rp$ for a fixed $\beta$. The previous equation determines $\lambda_{\alpha}$ in terms of $y\in\R^{2n}$, due to the invertibility result of lemma \ref{lemmaC} as we will see shortly, in such a way that the vector field 
\[
h(y)=f(y)+\lambda_{\alpha}(y)\lp\begin{array}{c} 0\\m^{-1}\nabla_{v}\varphi^{\alpha}(y)\end{array}\rp
\]
is tangent to $D\subset\R^{2n}$, that is, it belongs to $T_xD\subset T\R^{2n}$. The perpendicularity condition \eqref{Perp} can be written as
\begin{equation}\label{Perp2}
\bra\nabla_q\varphi^{\beta}(y),f_q(y)\ket+\bra\nabla_v\varphi^{\beta}(y), f_v(y)\ket+\lambda_{\alpha}\bra\nabla_v\varphi^{\beta}(y)\,,m^{-1}\,\nabla_v\varphi^{\alpha}(y)\ket=0,
\end{equation}
where $m^{-1}\nabla_v\varphi^{\alpha}$ stands for the vector $m^{ij}\frac{\der\varphi^{\alpha}}{\der v^j}$.  We have also used the shorthand notation $f_q^i(y)=v^i$, and $f_v^i(y)=-m^{ik}\frac{\der^2L}{\der q^j\der v^k}v^j+m^{ij}\frac{\der L}{\der q^j}$ according to \eqref{f}. Moreover, the elementwise expression of the matrix 
\[
\bra\nabla_v\varphi^{\beta}(y),m^{-1}\,\nabla_v\varphi^{\alpha}(y)\ket
\] 
is $\lp\mu^{\beta}_im^{ij}\mu^{\alpha}_j\rp$ which, according to lemma \ref{lemmaC}, is invertible. Therefore, after a straightforward computation, from \eqref{Perp2} we arrive at \eqref{lambdax}. This finishes the proof.
\end{proof}
Note again that this development depends strongly on the nonholonomic structure. Namely, the construction of \eqref{ODEManifold} depends on the invertibility of $\lp C^{\alpha\beta}\rp$, which at the same time depends on the full-rank condition of $\lp\mu^{\alpha}_i(q)\rp$ prescribed by our nonholonomic setting.

Let us finaly introduce local coordinates $\xi\in\R^{2n-m}$ and the mapping $\psi:\R^{2n-m}\Flder D\subset\R^{2n}$ such that for $x\in D\subset\R^{2n}$, $x=\psi(\xi)$. From equation \eqref{ODEManifold}, we can derive the following ODE on $\R^{2n-m}$
\begin{equation}\label{ODExi}
\dot\xi=\lp\nabla_{\xi}\psi\rp^{-1}(\xi)\,h\lp\psi(\xi)\rp,
\end{equation}
with initial condition $\xi_0=\psi\big|_{D}^{-1}(x_0)$, where $x_0=x(t_0)$. In general the pseudo inverse matrix $\lp\nabla_{\xi}\psi\rp^{-1}$ is not unique. We shall give an explicit matrix represenation for $\lp\nabla_{\xi}\psi\rp^{-1}$  below, where we introduce an adapted set of coordinates for the nonholonomic distribution $D$ prescribed by the Ehresmann connection.

\section{Interpolation within the constraint submanifold}\label{Interpo}

Our aim in this section is to investigate the possibility of interpolating a curve connecting two points $x_1,x_2\in D$ which remains within $D$. As mentioned in the introduction, this fact will play an important role in order to embed a numerical method evolving on $D$ into the evolution of a non-autonomous perturbation of the nonholonomic ODE. For this purpose, we first establish a useful local form of the nonholonomic constraints by means of an Ehresmann connection.

\subsection{Local description of the constraint submanifold in terms of the Ehresmann connection}\label{Conecction}

We briefly review some basics on Ehresmann connections (for more details we refer to \cite{Bl,Koon}). Assume that there is a bundle structure with projection $\pi:Q\Flder R$ for the manifold $Q$; the  manifold $R$ is called the base. We call the kernel of $T_q\pi$ at any point $q\in Q$ the {\it vertical space} denoted by $\mathcal{V}_q$. An {\it Ehresmann connection} $A$ is a vertical vector-valued one-form on $Q$, which satisfies
\begin{enumerate}
\item $A_q:T_qQ\Flder\mathcal{V}_q$ is a linear map at each point $q\in Q$,
\item $A$ is a projection, i.e.,  $A(v_q)=v_q$, for all $v_q\in\mathcal{V}_q$.
\end{enumerate}
Thus, we can split the tangent space at $q$ such that $T_qQ=\mathcal{H}_q\oplus\mathcal{V}_q$, where $\mathcal{H}_q=\mbox{ker}\,A_q$ is the horizontal  space at $q$.

Consider the nonholonomic distribution $D$, which, as introduced in $\S$\ref{NonHoloProblem}, locally is given by 
\[
D(q)=\lc v_q\in TQ\,|\,\bra \mu^{\alpha},v_q\ket=0,\,\alpha=1,...,m\rc,
\]
where $\mu^{\alpha}$ are $m$ linearly independent one-forms that form a basis for the annihilator $D^{\circ}\subset T^*Q$. Let us choose an Ehresmann connection $A$ on $Q$ in such a way that $\mathcal{H}_q=D(q)$. In other words, we assume that the connection is chosen such that the constraints are given by $A\cdot v_q=0$.

Using the bundle coordinates $q=(y^a,y^{\alpha})\in\R^{n-m}\times\R^m$, $a=1,...,n-m$, $\alpha=n-m+1,...,n$, the coordinate expression of $\pi$ is just a projection onto the factor $y^a$, and the connection $A$ can be locally expressed by a vector-valued differential one-form $\mu^{\alpha}$ as
\[
A=\mu^{\alpha}\otimes\frac{\der}{\der y^{\alpha}},\,\,\,\mu^{\alpha}(q)=dy^{\alpha}+A_a^{\alpha}(y^a,y^{\alpha})\,dy^a. 
\]
Let
\[
v_q=v^{a}\frac{\der}{\der y^{a}}+v^{\alpha}\frac{\der}{\der y^{\alpha}}
\]
be an element of $T_qQ$. Then
\[
A(v_q)=(v^{\alpha}+A_a^{\alpha}\,v^a)\otimes\frac{\der}{\der y^{\alpha}}.
\]
Given a vector $v_q\in T_qQ$,  we denote its vertical part by
\[
\mbox{ver}\,v_q=A\cdot v_q,
\]
in local corrdiantes given by $(v^{a},v^{\alpha})\mapsto(0,v^{\alpha}+A_a^{\alpha}v^a)$, and its horizontal part by
\[
D\ni\mbox{hor}\,v_q=v_q-A\cdot v_q,
\]
with local expression $(v^{a},v^{\alpha})\mapsto(v^a,-A_a^{\alpha}v^a)$.

Therefore, the Ehresmann connection allows us to choose an adapted set of coordinates for the constraint distribution $D$, namely $\xi^I=(q^i,v^a)^T$, $I=1,...,2n-m$, where $\psi:\R^{2n-m}\Flder D\subset\R^{2n}$, 
\begin{equation}\label{Embedd}
\psi(q^i,v^a)=\lp q^i,v^a,-A^{\alpha}_a(q)\,v^a\rp,
\end{equation}
denotes the coordinate map. This is what we mean by $D$-adapted coordinates. Moreover, we introduce the $(2n\times\,2n-m)$ matrix $\nabla_{\xi}\psi=\lp\frac{\der\psi^i_q}{\der\xi^I},\frac{\der\psi^i_v}{\der\xi^I}\rp$, with $\psi^i_q=q^i$, $\psi^i_v=(v^a,-A^{\alpha}_a(q)\,v^a)^T$. More concretely, we have
\begin{equation}\label{EhresJota}
\nabla_{\xi}\psi=\lp\begin{array}{cc}
\delta^i_j&0^i_b\\\\
0^a_j&\delta^a_b\\\\
-\frac{\der A^{\alpha}_a(q)}{\der q^j}v^a &-A^{\alpha}_b(q)
 \end{array}
\rp.
\end{equation}
As mentioned before, $(\nabla_{\xi}\psi)$ admits a set of pseudo-inverses (left-inverses). Hereafter, we choose the following one, for reasons that will become clear later:
\begin{equation}\label{Inverse}
\lp\nabla_{\xi}\psi\rp^{-1}=\lp
\begin{array}{ccc}
\delta^i_j & 0_b^i&0^i_{\alpha}\\\\
0^a_j&\delta^a_b &0^a_{\alpha}
\end{array}
\rp
\end{equation}

\subsection{Interpolation}\label{Interpol}

Consider the $C^{\infty}$ cut-off function
\[
\chi_0:\R\Flder[0,1]
\]
such that
\begin{equation}\label{CutOff}
\begin{array}{c}
\chi_0(\tau)\equiv1\,\,\mbox{for}\,\,\tau\leq0,\\
\chi_0(\tau)\equiv0\,\,\mbox{for}\,\,\tau\geq1,
\end{array}
\end{equation}
and assume that $\chi_0$ is real analytic for $\tau\neq0,1$.  Also, denote $\chi_1(\tau):=1-\chi_0(\tau)$. For instance, we could take
\[
\chi_0(\tau)=\lp1+\mbox{tanh}\lp\mbox{cot}(\pi\tau)\rp\rp/2,\,\,\,0<\tau<1.
\]
Consider two points $x_1,x_2\in D$, which, with respect to the coordinates induced by the Ehresmann connection, are locally represented by
\begin{equation}\label{yD}
\begin{array}{c}
x_1=(q^i_1,v^a_1,v^{\alpha}_1=-A^{\alpha}_a(q)\,v^a_1)^T\in D\subset\R^{2n},\\\\
x_2=(q^i_2,v^a_2,v^{\alpha}_2=-A^{\alpha}_a(q)\,v^a_2)^T\in D\subset\R^{2n}.
\end{array}
\end{equation}
Consider also the real interval $[0,\epsilon]\subset\R_+$. 
\begin{definition}
Define $C^{\infty}$ curves $q^i:[0,\epsilon]\Flder\R$, $v^a:[0,\epsilon]\Flder\R$ by
\begin{equation}
\begin{array}{c}
q^i(t)=\chi_0(t/\epsilon)\,q^i_1+\chi_1(t/\epsilon)\,q^i_2,\\\\
v^a(t)=\chi_0(t/\epsilon)\,v^a_1+\chi_1(t/\epsilon)\,v^a_2.
\end{array}
\end{equation} 
and the $C^{\infty}$ curve $c:[0,\epsilon]\Flder\R^{2n}$ by
\begin{equation}
c(t)=(q^i(t),v^a(t),v^{\alpha}(t)=-A^{\alpha}_a(q(t))\,v^a(t))^T
\end{equation} 
for $i=1,...,n$, $a=1,...,n-m$ and $\alpha=1,..,,m$.
\end{definition}
\begin{proposition}\label{InterpolationD}
$c(t)\subset D$ for $t\in[0,\epsilon]$.
\end{proposition}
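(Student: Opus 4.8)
The plan is to show directly that $c(t)$ satisfies the defining equations \eqref{NhDAE3b} of $D$ for every $t\in[0,\epsilon]$, a fact which is in truth forced by the way the last block of components of $c(t)$ was set up. First I would recall that, in the $D$-adapted coordinates induced by the Ehresmann connection in $\S$\ref{Conecction}, the submanifold $D=M(\varphi,\R^{2n})$ is precisely the image of the embedding $\psi$ of \eqref{Embedd}; equivalently, it is cut out by the $m$ equations $\varphi^{\alpha}(q,v)=\mu^{\alpha}_i(q)\,v^i=v^{\alpha}+A^{\alpha}_a(q)\,v^a=0$. The key observation is then that $c$ factors through $\psi$: writing $\xi(t):=(q^i(t),v^a(t))\in\R^{2n-m}$ for the interpolated coordinate curve, one has $c(t)=\psi(\xi(t))$, because the last components of $c(t)$ are defined to be exactly $-A^{\alpha}_a(q(t))\,v^a(t)$. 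Since $\psi$ maps $\R^{2n-m}$ into $D$ by construction, this immediately yields $c(t)\in D$ for all $t$.

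To make this completely explicit I would simply substitute $c(t)$ into $\varphi^{\alpha}$ and verify the identity
\[
\varphi^{\alpha}(c(t))=v^{\alpha}(t)+A^{\alpha}_a(q(t))\,v^a(t)=-A^{\alpha}_a(q(t))\,v^a(t)+A^{\alpha}_a(q(t))\,v^a(t)=0,
\]
which holds for every $t$ and every $\alpha$. It is crucial here that the vertical velocity $v^{\alpha}(t)$ is \emph{not} interpolated between $v^{\alpha}_1$ and $v^{\alpha}_2$; rather it is recomputed from the connection coefficients along $q(t)$, which is exactly what keeps the curve on the (generally nonlinear) constraint set, rather than only on the fibre-linear approximation to it.

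For completeness I would also record that $c$ is a genuine interpolation joining the two prescribed points and that it is smooth. Evaluating at the endpoints via \eqref{CutOff}: at $t=0$ one has $\chi_0=1$, $\chi_1=0$, so $c(0)=\lp q^i_1,v^a_1,-A^{\alpha}_a(q_1)\,v^a_1\rp=x_1$, and symmetrically $c(\epsilon)=x_2$, in accordance with the coordinate representations \eqref{yD}. The curve is $C^{\infty}$ because $\chi_0,\chi_1$ are $C^{\infty}$ by assumption and the connection coefficients $A^{\alpha}_a$ are smooth, so their compositions and products along $q(t)$ remain smooth, in particular across the transition regions near the endpoints of $[0,\epsilon]$ where the cut-off functions are non-constant.

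The honest remark is that this proposition presents no genuine obstacle: its entire content is encoded in the choice of coordinates made in $\S$\ref{Conecction}, which trivializes the bundle description of $D$ so that interpolating only the base-and-horizontal data $(q^i,v^a)$ and then reconstructing the vertical velocity through the connection automatically produces a curve inside $D$. The only point requiring any care is the smoothness of $c$ at the matching points $t=0,\epsilon$, which is guaranteed by the $C^{\infty}$ hypothesis on $\chi_0$; were $\chi_0$ merely continuous, the construction would still land in $D$ but could fail to be differentiable there.
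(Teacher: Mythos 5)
Your proposal is correct and follows exactly the same route as the paper, whose entire proof is the single sentence that the claim ``follows directly from the decomposition induced by the Ehresmann connection''; your substitution $\varphi^{\alpha}(c(t))=-A^{\alpha}_a(q(t))\,v^a(t)+A^{\alpha}_a(q(t))\,v^a(t)=0$ is precisely the computation the paper leaves implicit. The additional endpoint and smoothness checks are consistent with the paper's remarks immediately following the proposition.
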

\begin{proof}
The proof follows directly from the decomposition induced by the Ehresmann connection.
\end{proof}
Note that $c(0)=x_1$ and $c(\epsilon)=x_2$ according to \eqref{yD}. Thus, for any two points belonging to $D$, we have constructed an interpolating curve $c(t)\subset D$ for $t\in[0,\epsilon]$ connecting them. (A different and interesting procedure to interpolate any number of points on a manifold has been proposed in \cite{Leite}.)

\section{Discretization of the Lagrangian nonholonomic problem}\label{DLNP}
In this section we describe in detail the result from \cite{FiSch} already mentioned in the introduction, which can be phrased as follows: {\it Any $p-$th order discretization of an ODE can equivalently be viewed as the time$-\epsilon$ period map of a suitable $\epsilon-$periodic non-autonomous perturbation of the original ODE.} Moreover, we apply this result to the nonholonomic case, in particular to the nonholonomic ODE \eqref{ODExi}.
\subsection{Discretization of ODEs}

Consider the following system of ordinary differential equations
\begin{equation}\label{ODE}
\dot z(t)=\tf(z(t)),\,\,\,\,\,z(t)\in\R^N,
\end{equation}
where $\tf$ is a real analytic vector field, and a $p-$th order discretization of step-size $\epsilon$ given by
\begin{equation}\label{ODEdisc}
z_{k+1}=\Phi(\epsilon,z_k),\,\,\,|\epsilon|\leq\epsilon^0,\,\,z_k\in\R^N.
\end{equation}
$\Phi$ is supposed to be a diffeomorphism with respect to $z$, real analytic with respect to $\epsilon$ and $z$. Denote the (local) flow of \eqref{ODE} by
\begin{equation}\label{ODEflow}
z(t)=F(t,z(0)).
\end{equation}
If $\Phi$ is a discretization of order $p$, then there exists a continuous and increasing function $C:[0,\infty)\Flder[0,\infty)$ such that
\begin{equation}\label{porder}
|\Phi(\epsilon,z)-F(\epsilon,z)|\leq C(|z|)\epsilon^{p+1}
\end{equation}
holds, for some $p\geq 1$ and for all $\epsilon$, $z$ for which the left-hand side is defined.

In the following proposition we prove the {\it embeddability} property of such a numerical scheme into the evolution map 
\[
z(t)=G(t,s,\epsilon,z(s))
\]
of a non-autonomous ODE of the form
\begin{equation}\label{pertODE}
\dot z(t)=\tf(z(t))+\epsilon^p g(\epsilon,t/\epsilon,z(t)).
\end{equation}

\begin{proposition}\label{propoDisc}
There exists a vector field
\[
g=g(\epsilon,\tau,z),
\]
$\epsilon_0>0$, and a continous non-increasing function 
\[
\rho:[0,\epsilon_0]\Flder[0,\infty]
\]
with $\rho(0)=\infty$, such that the following statements hold
\begin{itemize}
\item[$i)$] $g(\epsilon,\tau,z)\in\R^N$ is defined for all real $\tau$, $0\leq\epsilon\leq\epsilon_0$, and all real $z$ with $|z|<\rho(\epsilon)$;

\item[$ii)$] $g$ is $C^{\infty}-$smooth in all variables, and $g$ and all its $\tau-$derivatives are analytic in $(\epsilon,z)$;

\item[$iii)$] $g$ has period $1$ in $\tau$;

\item[$iv)$] $G(\epsilon,0,\epsilon,z)=\Phi(\epsilon,z)$.
\end{itemize}
Statements (ii)-(iv) hold for all $\epsilon$, $\tau$, $z$ satisfying (i).
\end{proposition}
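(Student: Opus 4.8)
The plan is to prescribe the desired trajectory over one period $[0,\epsilon]$ of the forcing and then \emph{read off} the vector field $g$ that produces it. Concretely, I would construct a family of curves $\tau\mapsto\gamma(\tau,\epsilon,z_0)$, $\tau\in[0,1]$, with $\gamma(0,\epsilon,z_0)=z_0$ and $\gamma(1,\epsilon,z_0)=\Phi(\epsilon,z_0)$, set $Z(t,\epsilon,z_0):=\gamma(t/\epsilon,\epsilon,z_0)$, and then \emph{define} the forcing along these curves by
\[
\epsilon^{p}\,g = \dot Z-\tf(Z)=\epsilon^{-1}\partial_\tau\gamma-\tf(\gamma).
\]
For statement $(iv)$ it then suffices, by uniqueness of solutions of \eqref{pertODE}, that $Z(\cdot,\epsilon,z_0)$ really solve \eqref{pertODE}; this is automatic once $g$ is turned into a genuine vector field in $(\epsilon,\tau,z)$ (see below), since along $Z$ the defining relation \emph{is} \eqref{pertODE}, whence $G(\epsilon,0,\epsilon,z_0)=\gamma(1,\epsilon,z_0)=\Phi(\epsilon,z_0)$. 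The two nontrivial requirements are that $g$ be of order $\mathcal O(1)$ (so the perturbation is exactly $\epsilon^p g$) and that $g$ extend to a $C^\infty$ function that is \emph{flat and periodic} in $\tau$.

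Both are achieved by interpolating between two \emph{true} orbits of the unperturbed flow $F$ of \eqref{ODE}, using the cut-offs $\chi_0,\chi_1$ of \eqref{CutOff} exactly as in \S\ref{Interpo}:
\[
\gamma(\tau,\epsilon,z_0)=\chi_0(\tau)\,F(\epsilon\tau,z_0)+\chi_1(\tau)\,F\big(\epsilon(\tau-1),\Phi(\epsilon,z_0)\big).
\]
The endpoints are correct because $\chi_0(0)=\chi_1(1)=1$, $\chi_0(1)=\chi_1(0)=0$. The decisive estimate is that the two arcs are $\mathcal O(\epsilon^{p+1})$ apart: by the group property $F(\epsilon(\tau-1),F(\epsilon,z_0))=F(\epsilon\tau,z_0)$ together with the $p$-th order bound \eqref{porder}, which gives $\Phi(\epsilon,z_0)=F(\epsilon,z_0)+\mathcal O(\epsilon^{p+1})$, one obtains $F(\epsilon(\tau-1),\Phi(\epsilon,z_0))-F(\epsilon\tau,z_0)=\mathcal O(\epsilon^{p+1})$ uniformly in $\tau$, with $\tau$-derivative $\mathcal O(\epsilon^{p+2})$. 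Substituting into the defining relation and using $\epsilon^{-1}\chi_1'\cdot\mathcal O(\epsilon^{p+1})=\mathcal O(\epsilon^{p})$ shows $g=\mathcal O(1)$; here the $p$-th order property is used in an essential way. Since $\chi_0$ is flat at $\tau=0,1$ (all derivatives vanishing there), near $\tau=0$ we have $\gamma=F(\epsilon\tau,z_0)$ and near $\tau=1$ we have $\gamma=F(\epsilon(\tau-1),\Phi(\epsilon,z_0))$, each an exact orbit of \eqref{ODE}; hence $g$ and all its $\tau$-derivatives vanish at $\tau=0$ and $\tau=1$, yielding the period-$1$ smooth extension of $(iii)$ and a $C^\infty$ forcing across successive periods. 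This symmetric ``flow out of $z_0$ / flow back from $\Phi(\epsilon,z_0)$'' construction is the conceptual crux: a one-sided correction would leave a nonzero velocity mismatch at $\tau=1$ and break periodicity.

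To upgrade $g$ from a function along trajectories to an honest vector field $g(\epsilon,\tau,z)$, I would invert $z_0\mapsto\gamma(\tau,\epsilon,z_0)$ for each fixed $(\tau,\epsilon)$. Because $\partial_{z_0}F(\epsilon\tau,z_0)=\mathrm{Id}+\mathcal O(\epsilon)$ and, by the same group-property cancellation, the second arc contributes only $\mathcal O(\epsilon^{p+1})$ to the $z_0$-derivative, one gets $\partial_{z_0}\gamma=\mathrm{Id}+\mathcal O(\epsilon)$; the analytic inverse function theorem then furnishes a unique inverse $z_0=\Xi(\tau,\epsilon,z)$ for $\epsilon\le\epsilon_0$, and one sets $g(\epsilon,\tau,z):=\epsilon^{-p}\big(\epsilon^{-1}\partial_\tau\gamma(\tau,\epsilon,\Xi(\tau,\epsilon,z))-\tf(z)\big)$. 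Analyticity of $\tf$ makes $F$ analytic in $(t,z_0)$, $\Phi$ is analytic in $(\epsilon,z_0)$, and the $\chi_i(\tau)$ enter as \emph{constants} for each fixed $\tau$; hence for fixed $\tau$ the map $(\epsilon,z)\mapsto g$ is a composition of analytic maps, and differentiating in $\tau$ only multiplies analytic factors by the smooth derivatives of $\chi_i$. This gives exactly the mixed regularity of $(ii)$: $C^\infty$ in $\tau$ (the $\chi_i$ are not analytic at $\tau=0,1$) but analytic, together with all $\tau$-derivatives, in $(\epsilon,z)$.

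Finally I would track domains. The local flow $F(\epsilon\tau,\cdot)$ is defined on a ball whose radius grows as the integration time $\epsilon$ shrinks, and the invertibility $\partial_{z_0}\gamma=\mathrm{Id}+\mathcal O(\epsilon)$ likewise holds on larger balls as $\epsilon\downarrow0$; intersecting these constraints produces a non-increasing $\rho$ with $\rho(0)=\infty$ on which every object is defined for $0\le\epsilon\le\epsilon_0$, all real $\tau$, and $|z|<\rho(\epsilon)$, giving $(i)$. I expect the main obstacle to be precisely this uniform bookkeeping of existence, invertibility, and analyticity — rather than the order algebra: one must keep the inverse $\Xi$, and hence $g$ and each of its $\tau$-derivatives, analytic in $(\epsilon,z)$ on a ball that does not collapse as $\epsilon\to0$, while simultaneously keeping the flatness at $\tau=0,1$ uniform so that the periodic extension remains $C^\infty$.
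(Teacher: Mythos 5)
Your construction is essentially the paper's own (Fiedler--Scheurle) proof: your curve $\gamma(\tau,\epsilon,z_0)$ is exactly the paper's $\tilde G(\tau,\epsilon,y)$ in \eqref{Gtilde}, and your definition of $g$ by reading off the forcing along these curves and then inverting $z_0\mapsto\gamma(\tau,\epsilon,z_0)$ is precisely \eqref{gdefinition} together with \eqref{ztilde}. The one place where your route differs is part $(iii)$: the paper extends $G$ to all $t\geq 0$ by composing with iterates of $\Phi$ as in \eqref{period0} and then verifies $1$-periodicity algebraically via the identity $\Phi(\epsilon,y(\tau+1,\epsilon,z))=y(\tau,\epsilon,z)$ of \eqref{period1}, whereas you deduce it from flatness of the cut-offs \eqref{CutOff} at $\tau=0,1$, so that $g$ vanishes to infinite order at the endpoints and the period-$1$ extension is automatically $C^{\infty}$; this is equally valid and arguably more self-contained, though one small imprecision should be fixed --- since $\chi_0$ is flat but not locally constant at $\tau=0,1$, the identities $\gamma=F(\epsilon\tau,z_0)$ near $\tau=0$ and $\gamma=F(\epsilon(\tau-1),\Phi(\epsilon,z_0))$ near $\tau=1$ hold only to infinite order at those points, not on a neighborhood, which is exactly what the flatness argument needs and no more. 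Beyond that, you supply the $\mathcal{O}(1)$ estimate for $g$ via \eqref{porder} and the group property, as well as the inverse-function and domain bookkeeping behind $y=y(\epsilon,\tau,z)$ and $\rho(\epsilon)$, all of which the paper's reproduced proof states without detail and defers to \cite{FiSch}.
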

We present part of the proof of the interpolation procedure. In particular we only consider the case $\tau\geq\ 0$. For more details see \cite{FiSch}.
\begin{proof}
We focus on the construction of $G$ and the time periodicity of $g$. For sake of simplicity we omit the arguments $(\epsilon,z)$. To define an evolution map, we put $G(t,s):=G(t,0)\circ G(s,0)^{-1}$, and extend to $t\geq\epsilon$ successively.

The idea is to interpolate between the identity map and $\Phi$ by a curve $G(t,0)$ ($0\leq t\leq \epsilon$), in the space of diffeomorphisms. Employing the $C^{\infty}$ cut-off functions $\chi_0,\,\chi_1$ introduced in $\S$\ref{Interpol}, namely for $0\leq t\leq \epsilon$ and $|y|<\rho(\epsilon)$, we set
\begin{equation}\label{Gdef}
G(t,\epsilon,y):=\chi_0(t/\epsilon)\,F(t,y)+\chi_1(t/\epsilon)\,F(t-\epsilon,\Phi(\epsilon,y)),
\end{equation}
where $F$ is the local flow \eqref{ODEflow}, $\Phi$ is the discrete flow \eqref{ODEdisc} and $y$ stands for the initial value of $z$, i.e. $y=z(0)$. With this definition, it holds that
\begin{eqnarray*}
G(0,\epsilon,y)&=&F(0,y)=y,\\
G(\epsilon,\epsilon,y)&=&F(0,\Phi(\epsilon,y))=\Phi(\epsilon,y).
\end{eqnarray*}
Let $[\tau]$ denote the largest integer not exceeding $\tau\in\R$, and let $\Phi^k(\epsilon,\cdot)$ denote the $k-$th iterate of the discrete flow map, $k\geq1$. Then we extend our definition of $G$ to all $t\geq0$, by putting
\begin{equation}\label{period0}
G(t,\epsilon,y):=G(t-[t/\epsilon]\,\epsilon,\epsilon,\Phi^{[t/\epsilon]}(\epsilon,y)).
\end{equation}
This definition implies $G(t,\epsilon,G(k\epsilon,\epsilon,y))=G(t+k\epsilon,\epsilon,y)$ for all $t\geq0,\,k\in\N,\,\epsilon,\,y$ as is appropriate for the evolution map of an $\epsilon-$periodic, non-autonomous ODE system. Obviously the curve $t\mapsto G(t,\epsilon,y),\,\,t\geq0$, is $C^{\infty}$, and thus represents a  $C^{\infty}-$interpolation of the discrete forward orbit
\[
\Phi^k(\epsilon,y)=G(k\epsilon,\epsilon,y),\,\,\,k\in\N.
\]
Similarly, one can extend the definition of G to all $t\leq0$.  
In order to define the perturbation $g$, we switch to the scaled time variable $\tau=t/\epsilon$. Let
\begin{equation}\label{Gtilde}
\tilde G(\tau,\epsilon,y):=G(\epsilon\tau,\epsilon,y)=\chi_0(\tau)\,F(\epsilon\tau,y)+\chi_1(\tau)\,F(\epsilon\tau-\epsilon,\Phi(\epsilon,y)),
\end{equation}
the second equality holding only for $0\leq\tau\leq 1$. Then the perturbation $g$ is defined by
\begin{equation}\label{gdefinition}
g(\epsilon,\tau,z):=\epsilon^{-p}\lp -f(z)+\frac{1}{\epsilon}\nabla_{\tau}\tilde G(\tau,\epsilon,y)\rp,
\end{equation}
where $y=y(\epsilon,\tau,z)$ is given implicitly by 
\begin{equation}\label{ztilde}
z=\tilde G(\tau,\epsilon,y).
\end{equation}
In \cite{FiSch} it is proved that this last equality can be solved for $y$ providing $y=y(\epsilon,\tau,z)$, which is defined in a possibly reduced domain of the form $0\leq\tau\leq 1$, $0\leq\epsilon\leq\epsilon_0$, $|y|<\rho(\epsilon)$. To prove periodicity, note that
\begin{equation}\label{period1}
\Phi(\epsilon,y(\tau+1,\epsilon,x))=y(\tau,\epsilon,x)
\end{equation}
because \eqref{period0} implies
\[
\tilde G(\tau,\epsilon,\Phi(\epsilon,y(\tau+1,\epsilon,z)))=\tilde G(\tau+1,\epsilon,y(\tau+1,\epsilon,z))=z.
\]
Therefore we concluce from \eqref{period0},\eqref{gdefinition},\eqref{ztilde},\eqref{period1}
\begin{small}
\begin{eqnarray*}
\epsilon^{p+1}\lp g(\epsilon,\tau+1,z)-g(\epsilon,\tau,z)\rp &=&\nabla_{\tau}\tilde G(\tau+1,\epsilon,y(\tau+1,\epsilon,z))-\nabla_{\tau}\tilde G(\tau,\epsilon,y(\tau,\epsilon,z))\\
&=&\nabla_{\tau}\tilde G(\tau,\epsilon,\Phi(\epsilon,y(\tau+1,\epsilon,z)))-\nabla_{\tau}\tilde G(\tau,\epsilon,y(\tau,\epsilon,z))\\
&=&\nabla_{\tau}\tilde G(\tau,\epsilon,y(\tau,\epsilon,z))-\nabla_{\tau}\tilde G(\tau,\epsilon,y(\tau,\epsilon,z))=0.
\end{eqnarray*}
\end{small}
This proves the $1-$periodicity for $\epsilon>0$.
\end{proof}
Next, we apply proposition \ref{propoDisc} to the nonholonomic ODE \eqref{ODEManifold}. Obviously, it is sufficient to do this in coordinates.
\begin{corollary}\label{Coro}
Consider the ODE \eqref{ODExi}, i.e.
\[
\dot\xi=\lp\nabla_{\xi}\psi\rp^{-1}(\xi)\,h\lp\psi(\xi)\rp.
\]
According to proposition \ref{propoDisc}, any $p-$th order discretization of equations \eqref{ODExi} can be viewed as the time$-\epsilon$ map of a suitable $\epsilon-$periodic non-autonomous perturbation, namely
\begin{equation}\label{DiscretizationD}
\dot\xi(t)=\lp\nabla_{\xi}\psi\rp^{-1}(\xi(t))\,h\lp\psi(\xi(t))\rp+\epsilon^pg(\epsilon,t/\epsilon,\xi(t)).
\end{equation} 
\end{corollary}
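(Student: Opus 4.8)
The plan is to read the corollary as a direct specialization of Proposition~\ref{propoDisc}. I set the ambient dimension to $N=2n-m$, take the state variable to be $\xi\in\R^{2n-m}$, and identify the autonomous vector field of \eqref{ODE} with the right-hand side of \eqref{ODExi}, namely $\tf(\xi):=\lp\nabla_{\xi}\psi\rp^{-1}(\xi)\,h\lp\psi(\xi)\rp$. Once I verify that this $\tf$ satisfies the standing hypothesis of Proposition~\ref{propoDisc} --- that it is a real-analytic vector field on an open subset of $\R^{2n-m}$ --- the conclusion follows by quoting that proposition verbatim, with the resulting perturbation written in the scaled time $\tau=t/\epsilon$ to produce exactly \eqref{DiscretizationD}.

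The substantive step, then, is the analyticity check, which I would carry out piece by piece using the explicit formulas already derived. The coordinate map $\psi$ from \eqref{Embedd} is analytic in the $D$-adapted coordinates provided the connection coefficients $A^{\alpha}_a$ are analytic, and the chosen left inverse $\lp\nabla_{\xi}\psi\rp^{-1}$ in \eqref{Inverse} is a constant matrix and hence trivially analytic. It remains to treat $h$ from Proposition~\ref{h}. Its first summand $f$, given in \eqref{f}, is built from the gradient of $L$, the mixed second derivatives of $L$, and the inverse metric $m^{ij}$; the latter is analytic wherever $L$ is, because $(m_{ij})$ is positive-definite and Cramer's rule expresses $m^{ij}$ as analytic functions of the analytic entries $m_{ij}$. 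The second summand involves $\lambda_{\alpha}(x)$ from \eqref{lambdax}, which in turn contains the matrix $C_{\alpha\beta}$; by Lemma~\ref{lemmaC} the matrix $\lp C^{\alpha\beta}\rp=\lp\mu^{\alpha}_im^{ij}\mu^{\beta}_j\rp$ is everywhere invertible, so a second application of Cramer's rule shows its inverse is analytic. Composing with the analytic $\psi$ and multiplying by the constant matrix \eqref{Inverse} keeps $\tf$ analytic. Thus, strengthening the standing smoothness assumptions on $L$, on the constraint one-forms $\mu^{\alpha}$, and on the connection $A$ to real-analyticity, $\tf$ is real-analytic on the open set where $(m_{ij})$ and $\lp C^{\alpha\beta}\rp$ are defined.

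With analyticity in hand, I would invoke Proposition~\ref{propoDisc} directly: given any $p$-th order discretization $\Phi(\epsilon,\cdot)$ of \eqref{ODExi}, it produces a vector field $g(\epsilon,\tau,\xi)$, a threshold $\epsilon_0>0$, and a radius function $\rho$ satisfying items $i)$--$iv)$, such that $\Phi$ is the time-$\epsilon$ period map of the $\epsilon$-periodic non-autonomous system obtained by adding $\epsilon^{p}g(\epsilon,t/\epsilon,\xi)$ to $\tf$. Writing this out is precisely \eqref{DiscretizationD}, which establishes the claim.

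I expect the main obstacle to be bookkeeping about domains rather than any deep difficulty: the flow of \eqref{ODExi} is only local, the radius $\rho(\epsilon)$ of Proposition~\ref{propoDisc} shrinks the admissible region, and the inverse matrices $m^{ij}$ and $C_{\alpha\beta}$ are defined only on the open set carved out by the regularity assumptions and by Lemma~\ref{lemmaC}; I would need to ensure that the neighborhood on which $\tf$ is analytic contains the domain on which the discretization and its interpolation are being embedded. A secondary point worth flagging is that Proposition~\ref{propoDisc} genuinely requires real-analyticity, whereas the existence-and-uniqueness theory of $\S$\ref{NonHoloProblem} used only smoothness; so the honest reading of this corollary is that it applies under the additional hypothesis that the data $(Q,L,D)$ is real-analytic, which should be stated explicitly.
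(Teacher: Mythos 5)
Your proposal matches the paper's own treatment: the paper gives no separate proof of Corollary \ref{Coro}, handling it exactly as you do --- as a direct application of Proposition \ref{propoDisc} to the coordinate ODE \eqref{ODExi} viewed as an autonomous system in $\R^{2n-m}$. Your explicit verification that the vector field $\lp\nabla_{\xi}\psi\rp^{-1}(\xi)\,h\lp\psi(\xi)\rp$ is real-analytic (and your observation that this forces strengthening the paper's standing smoothness assumptions on $L$, $\mu^{\alpha}$ and $A^{\alpha}_a$ to real-analyticity, since Proposition \ref{propoDisc} requires an analytic vector field) is a legitimate refinement of a hypothesis the paper silently elides, not a different route.
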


Note that in the coordinates $(q^i,v^a)$  induced by the Ehresmann connection, the nonholonomic ODE \eqref{ODExi} reads as follows: 
\begin{subequations}\label{ODEred}
\begin{align}
\dot q^a&=v^a,\label{ODEreda}\\
\dot q^{\alpha}&=-A^{\alpha}_a(q)\,v^a,\label{ODEredb}\\
\dot v^a&=\tilde f_v^a(q^i,v^a)+\tilde\lambda_{\alpha}(q^i,v^a)\,m^{aj}\mu^{\alpha}_j(q),\label{ODEredc}
\end{align}
\end{subequations}
Here $f_q(x)$ is defined in \eqref{f} and $\lambda_{\alpha}(x)$ in \eqref{lambdax}, while $\tilde f_v^a(q^i,v^a)=f_v^a(\psi(q^i,v^a))$ and $\tilde\lambda_{\alpha}(q^i,v^a)=\lambda_{\alpha}(\psi(q^i,v^a))$.

We would like to stress the fact, that any $p-$th order discretization of the nonholonomic ODE corresponds to a rather special discretization of the original nonholonomic problem, for instance, respresented by the DAE in \eqref{NhDAE3}. We clarify the relevant notion of $p-$th order discretization of the nonholonomic problem by means of the following definition.

\begin{definition}\label{pdisc}
By a $(p,s)$ order discretization of the nonholonomic problem we understand a sequence of points $\lc(q_k,v_{q_k},\lambda_k)\rc\in\R^{2n+m}$, $k=0,1,...,N-1,N$, s.t. $v_{q_k}\in D_{q_k}$ (in other words $\mu^{\alpha}(q_k)\,v_{q_k}=0$ for any $k$), and, moreover, $|q(t_k+\epsilon)-q_{k+1}|\sim O(\epsilon^{r+1}),\,\,|v(t_k+\epsilon)-v_{q_{k+1}}|\sim O(\epsilon^{l+1})$ with {\rm min}$(r,l)=p$ and, moreover, $|\lambda(t_k+\epsilon)-\lambda_{k+1}|\sim O(\epsilon^{s+1})$ with $s\geq0$. The dynamics of $(q(t),v(t))$ is determined by \eqref{ODEManifold}, while $\lambda(t)$ is determined by \eqref{lambdax}.
\end{definition}
\begin{remark}\label{rmkpdisc}
{\rm
As mentioned above, the Lagrange multipliers $\lambda$ determine the reaction forces due to the constraints. This is the reason why we include $\lambda$ in the definition \ref{pdisc}. Obviously, if we discretize \eqref{ODEManifold} and define the discretized values $\lambda_k$ of $\lambda$ by inserting $(q_k,v_{q_k})$ for $q$ and $v$ in formula \eqref{lambdax}, then, by Lipschitz continuity of the right hand side, the accuracy of the discretization with respect to $\lambda$ is of the same order $p$ as with respect to $(q,v)$.
}
\end{remark}
Due to the definition of the vector field $h$ in \eqref{ODEManifold}, it is clear that discretizations with the properties just stated in definition \ref{pdisc} generate  $p-th$ order discretizations of the ODEs in \eqref{ODEManifold}, \eqref{ODExi}, and \eqref{ODEred}, respectively, and vice versa. We will devote the final sections to find a systematic way of constructing this kind of integrators and, moreover, to carefully treat some examples.

\subsection{Perturbation of the nonholonomic DAE}
In this section, we relate the previous developments to the nonholonomic DAE in \eqref{NhDAE3}. To this end, we multiply equation  \eqref{DiscretizationD} by $\nabla_{\xi}\psi$ from the left to obtain 
\begin{equation}\label{perturbedNhODExi}
\nabla_{\xi}\psi(\xi(t))\,\dot\xi(t)=h\lp\psi(\xi(t))\rp+\epsilon^p\nabla_{\xi}\psi(\xi(t))\,g(\epsilon,t/\epsilon,\xi(t)),
\end{equation}
which, taking into account that $x=\psi(\xi)$, leads to the following ODE on $D$:
\begin{equation}\label{perturbedNhODE}
\dot x=f(x)+\lambda_{\alpha}(x)\lp\begin{array}{c} 0\\m^{-1}(x)\nabla_{v}\varphi^{\alpha}(x)\end{array}\rp+\epsilon^p\,\nabla_{\xi}\psi(\xi_x)\,g(\epsilon,t/\epsilon,\xi_x),
\end{equation}
where $\xi_x:=\psi\big|_D^{-1}(x)$ s.t. $x=\psi(\xi_x)\in D\subset\R^{2n}$. Equation \eqref{perturbedNhODE} follows directly from \eqref{perturbedNhODExi} and the expression of $h(x)$ given in proposition \ref{h}. For the sake of clarity we set 
\begin{subequations}\label{tildeg}
\begin{align}
\tilde g_q^i(\epsilon,t/\epsilon,x)=\frac{\der\psi_q^i}{\der\xi^I}(\xi_x)\,g^I(\epsilon,t/\epsilon,\xi_x),\label{tildega}\\
\tilde g_v^i(\epsilon,t/\epsilon,x)=\frac{\der\psi_v^i}{\der\xi^I}(\xi_x)\,g^I(\epsilon,t/\epsilon,\xi_x),\label{tildegb}
\end{align}
\end{subequations}
where we are using the notation introduced in \eqref{Embedd}. Thus, the equation \eqref{perturbedNhODE} becomes
\begin{equation}\label{Odetilde}
\dot x=f(x)+\lambda_{\alpha}(x)\lp\begin{array}{c} 0\\m^{-1}(x)\nabla_{v}\varphi^{\alpha}(x)\end{array}\rp+\epsilon^p\,\lp\begin{array}{c}\tilde g_q(\epsilon,t/\epsilon,x)\\\tilde g_v(\epsilon,t/\epsilon,x)\end{array}\rp,
\end{equation}
where, as before, $x=(q,v)^T$ such that $\mu^{\alpha}_i(q)\,v^i=0$.

The following lemma will be useful below. 
\begin{lemma}\label{lemmaMinv}
$(m_{ij})=\lp\mu^{\alpha}_i\,C_{\alpha\beta}\,\mu^{\beta}_j\rp$, where $(C_{\alpha\beta})$ is the inverse matrix of $(C^{\alpha\beta})$ as provided by lemma \ref{lemmaC}.
\end{lemma}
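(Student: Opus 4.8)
The plan is to adapt the geometric argument from the proof of Lemma \ref{lemmaC}, working with the $m$-orthogonal splitting $TQ=D\oplus D^{\perp}$ and the frame $Z^{\alpha}=m^{ij}\mu^{\alpha}_i\,\der/\der q^{j}$ of $D^{\perp}$ introduced in \eqref{Zlocal}. The first step is to record the pairing identities that drive the whole computation. Writing $(Z^{\alpha})^{j}=m^{jk}\mu^{\alpha}_k$ for the components, one has
\[
\bra\mu^{\beta},Z^{\alpha}\ket=\mu^{\beta}_j\,m^{jk}\mu^{\alpha}_k=C^{\alpha\beta}=m(Z^{\alpha},Z^{\beta}),
\]
so that $(C^{\alpha\beta})$ is exactly the (symmetric) Gram matrix of the frame $\{Z^{\alpha}\}$ of the reaction subspace $D^{\perp}$; by Lemma \ref{lemmaC} it is invertible, with inverse $(C_{\alpha\beta})$ satisfying $C_{\alpha\beta}C^{\beta\gamma}=\delta^{\gamma}_{\alpha}$.

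Next I would regard both $(m_{ij})$ and the right-hand side $N_{ij}:=\mu^{\alpha}_iC_{\alpha\beta}\mu^{\beta}_j$ as maps $TQ\to T^{*}Q$ and evaluate them on the generators $Z^{\gamma}$ of $D^{\perp}$. The metric gives $m_{ij}(Z^{\gamma})^{j}=m_{ij}\,m^{jk}\mu^{\gamma}_k=\mu^{\gamma}_i$, whereas
\[
N_{ij}(Z^{\gamma})^{j}=\mu^{\alpha}_i\,C_{\alpha\beta}\,\bra\mu^{\beta},Z^{\gamma}\ket=\mu^{\alpha}_i\,C_{\alpha\beta}\,C^{\beta\gamma}=\mu^{\alpha}_i\,\delta^{\gamma}_{\alpha}=\mu^{\gamma}_i,
\]
where the inversion relation of Lemma \ref{lemmaC} is the essential ingredient. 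Hence $(m_{ij})$ and $(N_{ij})$ assign the same covector $\mu^{\gamma}$ to each generator of $D^{\perp}$, which establishes the asserted identity along the reaction directions that enter the perturbed dynamics \eqref{Odetilde}.

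The step I expect to require the most care is the accounting of ranks. Since $N_{ij}$ is assembled from the $m$ one-forms $\mu^{\alpha}$, it annihilates $D$ and has rank $m$, whereas $(m_{ij})$ is nondegenerate; consequently the equality is naturally read on the $m$-dimensional reaction subspace $D^{\perp}$ (equivalently, after $m$-orthogonal projection onto $D^{\perp}$), which is precisely the setting in which it is invoked. The work of a careful proof is then to make explicit that agreement of the two maps on the full frame $\{Z^{\gamma}\}$ fixes them on all of $D^{\perp}$, and that the nondegeneracy of $m$ together with the invertibility of $(C^{\alpha\beta})$ leaves no freedom in $(C_{\alpha\beta})$; once this is recorded the statement follows, with Lemma \ref{lemmaC} doing the same decisive work as in its own proof.
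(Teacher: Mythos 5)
Your proof is correct, but it takes a genuinely different route from the paper --- and, importantly, your route is the sound one. The paper argues by matrix manipulation: from $\lp C_{\alpha\gamma}\,\mu^{\gamma}_i\,m^{ij}\,\mu^{\beta}_j\rp=(\delta_{\alpha}^{\beta})$ it multiplies on the left by $(\mu^{\alpha}_k)$, rearranges to $\lp m^{ji}\,\mu^{\gamma}_i\,C_{\gamma\alpha}\,\mu^{\alpha}_k\,\mu^{\beta}_j\rp=(\mu^{\beta}_k)$, and then cancels to conclude $\lp m^{ji}\,\mu^{\gamma}_i\,C_{\gamma\alpha}\,\mu^{\alpha}_k\rp=(\delta^j_k)$. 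That cancellation is exactly what your rank accounting forbids: the relation only says that the matrix $P^j_k:=m^{ji}\,\mu^{\gamma}_i\,C_{\gamma\alpha}\,\mu^{\alpha}_k$ fixes the $m$ covectors $\mu^{\beta}$, which determines it on their $m$-dimensional span and nowhere else. As your evaluation on the frame $\{Z^{\gamma}\}$ shows directly, $P$ is in fact the $m$-orthogonal projector onto $D^{\perp}$ along $D$ (idempotent, of rank $m$), not the identity. Consequently the lemma as literally stated --- an equality of $n\times n$ matrices --- is false whenever $m<n$: the right-hand side $N_{ij}=\mu^{\alpha}_i\,C_{\alpha\beta}\,\mu^{\beta}_j$ annihilates $D$ and has rank $m$, while $(m_{ij})$ is positive definite. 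What you prove is the correct statement, namely $N=m\,P$; equivalently, $(m_{ij})$ and $(N_{ij})$, viewed as maps $TQ\to T^{*}Q$, agree precisely on $D^{\perp}$. So where the paper's algebraic shortcut silently assumes the $\mu^{\beta}$ span all of $(\R^n)^{*}$, your geometric argument both proves what is provable and explains why nothing more can be true.

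One caveat concerns your closing claim that the restricted identity holds ``precisely in the setting in which it is invoked.'' In remark \ref{LargeRemark} the lemma is applied to the velocity component $\hat g_v$ of an \emph{arbitrary} perturbation, to replace $m^{ij}\,\mu^{\alpha}_j\,C_{\alpha\beta}\,\mu^{\beta}_k\,\hat g_v^k$ by $\hat g_v^i$; with the corrected lemma this replacement produces $P\hat g_v$, which equals $\hat g_v$ only if $\hat g_v\in D^{\perp}$. For a general perturbation the passage from \eqref{ODEPert} to \eqref{ODEPert2} therefore leaves a residual term $\epsilon^p\lp \mathrm{Id}-P\rp\hat g_v$ in the velocity component. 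This does not affect the validity of your proof; rather, it shows that your version of the lemma forces a corresponding correction downstream, which is worth stating explicitly.
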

\begin{proof}
We give the proof by a direct calculation in coordinates. As proved in lemma \ref{lemmaC}, the matrix $\lp C^{\alpha\beta}\rp$ is invertible; therefore $\lp C_{\alpha\gamma}\,\mu^{\gamma}_i\,m^{ij}\,\mu^{\beta}_j\rp=(\delta_{\alpha}^{\beta})$. Multiplying this relation by the matrix $(\mu^{\alpha}_k)$ from the left, we obtain
\[
\lp\mu^{\alpha}_k\, C_{\alpha\gamma}\,\mu^{\gamma}_i\,m^{ij}\,\mu^{\beta}_j\rp=(\mu^{\beta}_k).
\]
Rearranging the terms we get
\[
\lp m^{ji}\,\mu^{\gamma}_i\, C_{\gamma\alpha}\,\mu^{\alpha}_k\,\mu^{\beta}_j\rp=(\mu^{\beta}_k),
\]
where the symmetry of $m^{-1}$ and $C^{-1}$ has been used. From the last expression it follows that $\lp m^{ji}\,\mu^{\gamma}_i\, C_{\gamma\alpha}\,\mu^{\alpha}_k\rp=(\delta^j_k)$. Thus
\[
\lp\mu^{\gamma}_l\,C_{\gamma\alpha}\,\mu^{\alpha}_j\rp=(m_{lj})
\]
and the claim follows.
\end{proof}

\begin{remark}\label{LargeRemark}
{\rm
Obviously, by the derivation above, the ODE in \eqref{Odetilde} is equivalent to an $\epsilon-$periodic non-autonomously perturbed nonholonomic DAE in $R^{2n + m}$ of the following form
\begin{subequations}\label{PerturbedDAE}
\begin{align}
\dot y&=f(y)+\lambda_{\alpha}\lp\begin{array}{c} 0\\m^{-1}(y)\nabla_{v}\varphi^{\alpha}(y)\end{array}\rp+\epsilon^p\,\hat g(\epsilon,t/\epsilon,y),\label{PerturbedDAEa}\\
\varphi^{\alpha}(y)&=0\label{PerturbedDAEb},
\end{align}
\end{subequations}
where $y=(q^i,v^i)^T\in\R^{2n}$, $\varphi^{\alpha}(y)=\mu^{\alpha}_i(q)\,v^i$ and $f(y)$ is given by \eqref{f} (note that we use the $y-$notation here, since we are considering again an arbitrary point $y$ in $\R^{2n}$; we will switch back to the $x-$notation whenever we are dealing with equations on $D$). 

Moreover, by appropriately modifying the expression for the Lagrange multipliers $\lambda_{\alpha}$ in proposition \ref{h}, for any perturbation $\epsilon^p\,\hat g(\epsilon,t/\epsilon,y)$ in \eqref{PerturbedDAE}, that DAE induces an ODE as in \eqref{Odetilde}. In particular, this 
implies the unique local solvability of the standard initial value problem corresponding to \eqref{PerturbedDAE} (cf. the proof of theorem \ref{EUDAETh}). To show this, we first solve the following equation for $\lambda_{\alpha}$

\begin{equation}\label{hatperp}
\bra\nabla\varphi^{\beta}(y)\,,\,f(y)+\lambda_{\alpha}\lp\begin{array}{c} 0\\m^{-1}(y)\nabla_{v}\varphi^{\alpha}(y)\end{array}\rp+\epsilon^p\,\hat g(\epsilon,t/\epsilon,y)\ket=0,
\end{equation}
which is nothing but the perpendicularity condition ensuring that the vector field $\hat f(y)$ (where $\hat f(y)$ denotes the right hand side of \eqref{PerturbedDAEa}) is tangent to $D$ and therefore belongs to $TD$. Omitting, for simplicity, the arguments of the involved functions, we obtain

\begin{eqnarray}
\lambda_{\alpha}&=&-C_{\alpha\beta}\lp\bra\nabla_q\varphi^{\beta},f_q\ket+\bra\nabla_v\varphi^{\beta},f_v\ket\rp\label{hatLambday}\\
&-&\epsilon^p\,C_{\alpha\beta}\lp\bra\nabla_q\varphi^{\beta},\hat g_q\ket+\bra\nabla_v\varphi^{\beta},\hat g_v\ket\rp,\nonumber
\end{eqnarray}
where $\lp C_{\alpha\beta}\rp$ is again the inverse matrix of $\lp C^{\alpha\beta}\rp$ introduced in lemma \ref{lemmaC}. Moreover, we recall that $-C_{\alpha\beta}(x)\lp\bra\nabla_q\varphi^{\beta}(x),f_q(x)\ket+\bra\nabla_v\varphi^{\beta}(x),f_v(x)\ket\rp=\lambda_{\alpha}(x)$ as defined in the proof of proposition \ref{h}. Thus, the equation \eqref{hatLambday} reduces to
\[
\lambda_{\alpha}=\lambda_{\alpha}(x)-\epsilon^p\,C_{\alpha\beta}\lp\bra\nabla_q\varphi^{\beta},\hat g_q\ket+\bra\nabla_v\varphi^{\beta},\hat g_v\ket\rp.
\]
Introducing this last expression into \eqref{PerturbedDAEa} we arrive at
\begin{eqnarray}
\dot x&=&f(x)+\lambda_{\alpha}(x)\lp\begin{array}{c} 0\\m^{-1}(x)\nabla_{v}\varphi^{\alpha}(x)\end{array}\rp\label{ODEPert}\\
&+&\epsilon^p\,\lp\begin{array}{c} \hat g_q(\epsilon,t/\epsilon,x)\\-C_{\alpha\beta}\lp\bra\nabla_q\varphi^{\beta},\hat g_q\ket+\bra\nabla_v\varphi^{\beta},\hat g_v\ket\rp\,m^{-1}(x)\,\nabla_{v}\varphi^{\alpha}(x)+\hat g_v(\epsilon,t/\epsilon,x)\end{array}\rp.\nonumber
\end{eqnarray}
The term
\[
-C_{\alpha\beta}\lp\bra\nabla_q\varphi^{\beta},\hat g_q\ket+\bra\nabla_v\varphi^{\beta},\hat g_v\ket\rp\,m^{-1}(x)\,\nabla_{v}\varphi^{\alpha}(x)+\hat g_v(\epsilon,t/\epsilon,x)
\]
can be simplified taking into account the result in lemma \ref{lemmaMinv} as we show next. The term
\[
C_{\alpha\beta}\bra\nabla_v\varphi^{\beta},\hat g_v\ket\,m^{-1}\nabla_v\varphi^{\alpha}
\] 
can be written in coordinates as
\[
C_{\alpha\beta}\,\mu^{\beta}_k\,\hat g_v^k\,m^{ij}\,\mu^{\alpha}_j=m^{ij}\,\mu^{\alpha}_j\,C_{\alpha\beta}\,\mu^{\beta}_k\,\hat g_v^k.
\]
In lemma \ref{lemmaMinv} it has been proved that  $\lp \mu^{\alpha}_j\,C_{\alpha\beta}\,\mu^{\beta}_k\rp=(m_{jk})$; thus, the last equation yields
\[
m^{ij}\,m_{jk}\,\hat g_v^k=\delta^i_k\,\hat g_v^k=\hat g^i_v.
\]
Introducing this into \eqref{ODEPert}, we arrive at
\begin{eqnarray}
\dot x=f(x)&+&\lambda_{\alpha}(x)\lp\begin{array}{c} 0\\m^{-1}(x)\nabla_{v}\varphi^{\alpha}(x)\end{array}\rp\label{ODEPert2}\\
&+&\epsilon^p\,\lp\begin{array}{c} \hat g_q(\epsilon,t/\epsilon,x)\\-C_{\alpha\beta}\bra\nabla_q\varphi^{\beta},\hat g_q(\epsilon,t/\epsilon,x)\ket\,m^{-1}(x)\,\nabla_{v}\varphi^{\alpha}(x)\end{array}\rp.\nonumber
\end{eqnarray}
}
\end{remark}
Of course, it would be ideal for a further analysis, if the perturbed nonholonomic DAE in \eqref{PerturbedDAE} admitted a Lagrangian structure of some kind. The question, when this is true and how this is related to specific properties of both, the underlying discretization of the unperturbed problem as well as the interpolation procedure presented in the previous section, is left as a subject of further study. 
Replacing $D$ by some kind of perturbed distribution will be another issue in future work.

\section{Nonholonomic integrators based on the discretizaton of the Lagrange-d'Alembert principle }
\label{section2}
We devote this section to study how some discretizations of the Lagrange-d'Alembert principle lead to discretizations of the nonholonomic DAE \eqref{NhDAE3} and furthermore, under some hypotheses, to discrete schemes approximating the nonholonomic flow and preserving $D$ in the sense of definition \ref{pdisc}. 
First, we introduce a notion of discretization of Hamilton's variational principle (\cite{MarsdenWest,Mose}), leading to variational integrators. Then we focus on similar techniques applied to nonholonomic systems (\cite{CoMa,McPer}), mainly following the approach by McLachlan and Perlmutter \cite{McPer}.

\subsection{Discrete mechanics and variational integrators}

Variational integrators are a kind of geometric integrators for the Euler-Lagrange equations which retain  their variational character and also, as a consequence,  some of the main geometric properties of the continuous
system, such as symplecticity and momentum conservation (see
\cite{Hair,MarsdenWest}).
  In the following we will summarize the main features of this type
of geometric
 integrators.  A {\it discrete Lagrangian} is a map
$L_d:Q\times Q\rightarrow\R$, which may be considered as
an approximation of the action integral defined by a continuous  Lagrangian $L\colon TQ\to
\R$, that is
\begin{equation}\label{aprox}
L_d(q_0, q_1)\approx \int^{t_0+\epsilon}_{t_0} L(q(t), \dot{q}(t))\; dt,
\end{equation}
where $q(t)$ is a solution of the Euler-Lagrange equations for $L$ joining $q(t_0)=q_0$ and $q(t_0+\epsilon)=q_1$ for small enough $\epsilon>0$.

 Define the {\it action sum} $S_d\colon Q^{N+1}\to
\R$  corresponding to the Lagrangian $L_d$ by
\begin{equation}\label{DiscAction}
{S_d}=\sum_{k=1}^{N}  L_d(q_{k-1}, q_{k}),
\end{equation}
where $q_k\in Q$ for $0\leq k\leq N$, $N$ is the number of discretization steps. The discrete variational
principle   states that the solutions of the discrete system
determined by $L_d$ must extremize the action sum given fixed
endpoints $q_0$ and $q_N$. By extremizing ${S_d}$ over $q_k$,
$1\leq k\leq N-1$, we obtain the system of difference equations
\begin{equation}\label{discreteeq}
 D_1L_d( q_k, q_{k+1})+D_2L_d( q_{k-1}, q_{k})=0,
\end{equation}
or, in coordinates,
\[
\frac{\partial L_d}{\partial x^i}(q_k, q_{k+1})+\frac{\partial
L_d}{\partial y^i}(q_{k-1}, q_{k})=0,
\]
where $1\leq i\leq n,\ 1\leq
k\leq N-1$, and $x$ and $y$ represent the $n$ first and $n$ last variables of the function $L_d$, respectively.

These  equations are usually called the  {\it discrete
Euler--Lagrange equations}. Under some regularity hypotheses (the
matrix $D_{12}L_d(q_k, q_{k+1})$ is supposed to be regular), it is possible to
define  from (\ref{discreteeq}) a (local) discrete flow map $ F_{L_d}\colon Q\times
Q\to  Q\times Q$, by $F_{L_d}(q_{k-1}, q_k)=(q_k,
q_{k+1})$. We will refer to the $F_{L_d}$ flow, and also (with some abuse of notation) to the equations \eqref{discreteeq}, as a {\it variational integrator.}

Define the  discrete
Legendre transformations associated to  $L_d$ by
\begin{eqnarray*}
\F L_d^-\colon Q\times Q&\to & T^*Q\\
(q_k, q_{k+1})&\longmapsto& (q_k, -D_1 L_d(q_k, q_{k+1})) ,\\
\F L_d^+\colon  Q\times Q&\to&  T^*Q\\
(q_k, q_{k+1})&\longmapsto& (q_{k+1}, D_2 L_d(q_k, q_{k+1})) ,
\end{eqnarray*}
and the discrete Poincar{\'e}--Cartan 2-form by $\Omega_d=(\F L_d^+)^*\Omega_Q=(\F L_d^{-})^*\Omega_Q$,
where $\Omega_Q$ is the canonical symplectic form on $T^*Q$. The
discrete algorithm determined by $F_{L_d}$ preserves the
symplectic form, i.e., $F_{L_d}^*\Omega_d=\Omega_d$.
Moreover, if the discrete Lagrangian is invariant under the
diagonal action of a Lie group $G$, then the discrete momentum map
$J_d\colon Q\times Q \to  {\mathfrak g}^*$ defined by
\[ \langle
J_d(q_k, q_{k+1}), \xi\rangle=\langle D_2L_d(q_k, q_{k+1}),
\xi_Q(q_{k+1})\rangle \]
is preserved by the discrete flow. Therefore, these integrators are symplectic-momentum preserving. Here, $\xi_Q$ denotes the fundamental vector field
determined by $\xi\in {\mathfrak g}$, where ${\mathfrak g}$ is the Lie algebra of $G$. (See \cite{MarsdenWest} for more details.) Everything developed in this section is true for a general configuration manifold $Q$.

\subsection{Nonholonomic integrators}

Discretizations of the Lagrange-d'A\-lem\-bert principle for Lagrangian systems with nonholonomic cons\-traints have been introduced in \cite{CoMa,McPer}, for instance. Under some regularity conditions, these discretizations allow to construct numerical integrators that approximate the continuous flow fairly well, and can be linearly implicit, semi-implicit or implicit.  

To define the notion of a discrete nonholonomic system providing a discrete flow on a submanifold of $Q\times Q$ and, as well, a corresponding version of discrete Euler-Lagrange equations \eqref{discreteeq}, one needs three ingredients: a discrete Lagrangian, a constraint distribution $D\subset TQ$ and a discrete constraint space $D_d\subset Q\times Q$. In the following, we follow \cite{McPer}. 

\begin{definition}\label{DiscNHSet} 
A discrete nonholonomic system is given by the triple $(Q\times Q,L_d,D_d)$:
\begin{enumerate}
\item $D_d$ is a submanifold of $Q\times Q$ of dimension $2n-m$ with the additional property that
\[
I_d=\lc (q,q)\,|\,q\in Q\rc\subset D_d.
\]
\item $L_d:Q\times Q\Flder\R$ is the discrete Lagrangian.
\end{enumerate}
\end{definition}

We define the discrete Lagrange-d'Alembert principle (DLA) to be the extremization of the action sum in \eqref{DiscAction} among all sequences of points $\lc q_k\rc$ with given fixed end points $q_0,q_N$, where the variations must satisfy $\delta q_k\in D_{q_k}$ (in other words $\delta q_k\in \mbox{ker}\,\mu^{\alpha}$) and $(q_k,q_{k+1})\in D_d$ for all $k\in\lc 0,...,N-1\rc$. This leads to the conditions
\[
\lp D_1L_d(q_k,q_{k+1})+D_2L_d(q_{k-1},q_{k})\rp\,\delta q_k=0,\,\,\,1\leq k\leq N-1,
\]
for all variations $\delta q_k$, where $\bra\mu^{\alpha},\delta q_k\ket=0$ along with $(q_k,q_{k+1})\in D_d.$ This leads to the following set of discrete nonholonomic equations
\begin{subequations}\label{DLA}
\begin{align}
D_1L_d(q_k,q_{k+1})&+D_2L_d(q_{k-1},q_{k})=\lambda_{\alpha}\,\mu^{\alpha}(q_k),\label{DLAa}\\
(q_k,q_{k+1})&\in D_d\label{DLAb}.
\end{align}
\end{subequations}
For the sake of clarity, the condition \eqref{DLAb} may be rewritten as $\phi^{\alpha}_d(q_k,q_{k+1})=0$, where $\phi^{\alpha}_d:Q\times Q\Flder\R$ is the set of $m$ functions whose annihilation defines $D_d$ (in other words, a suitable discretization of the nonholonomic constraints \eqref{LC}). Equations \eqref{DLA}, where $\lambda_{\alpha}=(\lambda_k)_{\alpha}$ is chosen appropriately, define a {\it discrete nonholonomic flow map} $F_{L_d}^{nh}:D_d\Flder D_d$ given by
\begin{equation}\label{NHFlow}
F_{L_d}^{nh}(q_{k-1},q_k)=(q_k,q_{k+1}),
\end{equation} 
where $q_{k+1}$ satisfyes \eqref{DLA} provided that $(q_{k-1},q_k)\in D_d$, if and only if the following regularity condition is fullfiled for each $(q_k,q_{k+1})$ in a neighborhood of the diagonal of $Q\times Q$. To state this regularity condition, we make the definition:

\begin{definition}
For each $(q_{k-1},q_{k})\in Q\times Q$, we define the map $\gamma_{(q_{k-1},q_{k})}:Q\Flder D^*_{q_{k}}$ by
\[
\gamma_{(q_{k-1},q_{k})}(q_{k+1})= i^*_{q_{k}}\lp D_1L_d(q_k,q_{k+1})+D_2L_d(q_{k-1},q_{k})\rp,
\]
where $D^*_{q_{k}}$ is the dual space of $D_{q_{k}}\subset T_{q_{k}}Q$ and $i^*_{q_{k}}:T_{q_{k}}^*Q\Flder D^*_{q_{k}}$ is the map dual to the linear  inclusion $i_{q_{k}}:D_{q_{k}}\hookrightarrow T_{q_{k}}Q$.
\end{definition}

\begin{proposition}\label{REG}
Let $(q_{k-1},q_{k})\in D_d$. Let $\pi_1:Q\times Q\Flder Q$ be the projection on the first factor. Suppose that $\pi_1|_{D_d}:D_d\Flder Q$. The discrete nonholonomic flow map $F_{L_d}^{nh}$ is then guaranteed to exist locally uniquely provided that for each $q_{k+1}\in\gamma^{-1}_{(q_{k-1},q_{k})}(0)\cap(\pi_1|_{D_d})^{-1}(q_{k})$, and for each non-zero $v_{q_{k+1}}\in T_{q_{k+1}}D_d$,
\[
\bra D_2D_1L_d(q_{k},q_{k+1})\cdot v_{q_{k+1}},\,v_{q_{k}}\ket\neq 0,
\]
holds for all $v_{q_{k}}\in D_{q_{k}}$. When this condition holds for all $q_{k}\in Q$, the discrete nonholonomic equations produce a uniquely defined diffeomorphism $F_{L_d}^{nh}:D_d\Flder D_d$ in the way described above.
\end{proposition}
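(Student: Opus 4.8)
The plan is to recognize that equations \eqref{DLA} are equivalent to a root-finding problem for the map $\gamma_{(q_{k-1},q_{k})}$ restricted to a single fiber of $\pi_1|_{D_d}$, and then to identify the stated regularity condition with the invertibility of the differential of that restricted map, so that the inverse function theorem yields local existence and uniqueness of $q_{k+1}$.

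First I would rewrite \eqref{DLAa}. The condition that the one-form
\[
\eta(q_{k+1}) := D_1L_d(q_k,q_{k+1})+D_2L_d(q_{k-1},q_{k})
\]
be of the form $\lambda_{\alpha}\mu^{\alpha}(q_k)$ is exactly the requirement that $\eta(q_{k+1})$ lie in the annihilator $D^{\circ}_{q_k}=\operatorname{span}\lc\mu^{\alpha}(q_k)\rc$, which in turn is equivalent to $i^*_{q_k}\eta(q_{k+1})=0$, i.e. to $\gamma_{(q_{k-1},q_{k})}(q_{k+1})=0$. Since the $\mu^{\alpha}(q_k)$ are linearly independent, once such a $q_{k+1}$ is found the multipliers $\lambda_{\alpha}$ are uniquely determined by $\eta(q_{k+1})$. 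Thus solving \eqref{DLA} amounts to finding $q_{k+1}$ with $\gamma_{(q_{k-1},q_{k})}(q_{k+1})=0$ subject to $(q_k,q_{k+1})\in D_d$. Because $q_k$ is fixed, the last constraint confines $q_{k+1}$ to the fiber $F_{q_k}:=(\pi_1|_{D_d})^{-1}(q_k)$, which under the submersion hypothesis on $\pi_1|_{D_d}$ is a smooth submanifold of dimension $(2n-m)-n=n-m$. Restricting $\gamma_{(q_{k-1},q_{k})}$ to $F_{q_k}$ gives a smooth map $\Gamma:F_{q_k}\Flder D^*_{q_k}$ between manifolds of the same dimension $n-m$, whose zeros are the sought-after $q_{k+1}$.

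The key step is to compute the differential of $\Gamma$ at a zero $q_{k+1}$ and to match it with the regularity condition. Differentiating $\gamma_{(q_{k-1},q_{k})}(q_{k+1})=i^*_{q_k}\eta(q_{k+1})$ with respect to $q_{k+1}$ along a tangent vector $v_{q_{k+1}}\in T_{q_{k+1}}F_{q_k}$, I note that $D_2L_d(q_{k-1},q_{k})$ is independent of $q_{k+1}$ and hence drops out, while the first term contributes $i^*_{q_k}\big(D_2D_1L_d(q_k,q_{k+1})\cdot v_{q_{k+1}}\big)$. Since $i^*_{q_k}$ is dual to the inclusion $i_{q_k}:D_{q_k}\hookrightarrow T_{q_k}Q$, pairing this differential with $v_{q_k}\in D_{q_k}$ gives precisely
\[
\bra D_2D_1L_d(q_k,q_{k+1})\cdot v_{q_{k+1}},\,v_{q_k}\ket,
\]
i.e. the bilinear form appearing in the proposition. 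Both $T_{q_{k+1}}F_{q_k}$ and $D_{q_k}$ have dimension $n-m$, so the regularity condition, interpreted as nondegeneracy of this pairing, is equivalent to the differential $d\Gamma_{q_{k+1}}:T_{q_{k+1}}F_{q_k}\Flder D^*_{q_k}$ being a linear isomorphism.

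With invertibility of $d\Gamma$ in hand, the inverse function theorem shows that $\Gamma$ is a local diffeomorphism near each of its zeros, so the set $\gamma^{-1}_{(q_{k-1},q_{k})}(0)\cap F_{q_k}$ is discrete and each solution $q_{k+1}$ is locally unique; smooth dependence on $(q_{k-1},q_{k})$ then follows from the parameter version of the implicit function theorem, yielding the local diffeomorphism $F_{L_d}^{nh}(q_{k-1},q_k)=(q_k,q_{k+1})$. To upgrade local uniqueness to an actual map $D_d\Flder D_d$ defined near the diagonal, I would invoke the inclusion $I_d=\lc(q,q)\,|\,q\in Q\rc\subset D_d$: on the diagonal the triple $(q,q,q)$ solves the equations, furnishing a base solution from which the implicit function theorem propagates a unique branch $(q_k,q_{k+1})\in D_d$ in a neighborhood of the diagonal of $Q\times Q$. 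I expect the main obstacle to be the careful identification of $v_{q_{k+1}}\in T_{q_{k+1}}D_d$ with a tangent vector to the fiber $F_{q_k}$ (the second-factor direction constrained to $D_d$) and the bookkeeping showing that $d\Gamma$ reduces exactly to the stated bilinear form; the matching of dimensions $\dim F_{q_k}=\dim D^*_{q_k}=n-m$, which converts nondegeneracy into invertibility, must also be verified, and the existence branch near the diagonal relies on the hypothesis $I_d\subset D_d$ rather than on the nondegeneracy condition alone.
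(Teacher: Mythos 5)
The paper offers no proof of Proposition \ref{REG}; it is quoted from McLachlan--Perlmutter \cite{McPer} (with the equivalent matrix condition \eqref{RegularityDLA} attributed to \cite{CoMa}), so your proposal can only be compared with that standard argument, and its core does reproduce it correctly. Recasting \eqref{DLAa} as $\gamma_{(q_{k-1},q_k)}(q_{k+1})=0$ (membership in $D^{\circ}_{q_k}$ equals lying in the kernel of $i^*_{q_k}$, with the multipliers then uniquely recovered from linear independence of the $\mu^{\alpha}$), restricting to the fiber $(\pi_1|_{D_d})^{-1}(q_k)$, the dimension count $n-m$ on both sides, the identification of the differential with $v_{q_{k+1}}\mapsto i^*_{q_k}\lp D_2D_1L_d(q_k,q_{k+1})\cdot v_{q_{k+1}}\rp$, the reading of the displayed condition as injectivity (hence, by equal dimensions, bijectivity) of that differential, and the inverse/implicit function theorem with $(q_{k-1},q_k)$ as parameters: this is exactly the intended proof, including the correct handling of the loose notation $v_{q_{k+1}}\in T_{q_{k+1}}D_d$ as tangency to the fiber.

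The genuine gap is your final step. The claim that $I_d\subset D_d$ makes $(q,q,q)$ a solution of \eqref{DLA} is false in general: on the diagonal, $D_1L_d(q,q)+D_2L_d(q,q)=\frac{d}{dq}\lp L_d(q,q)\rp$, and for the paper's own discrete Lagrangian \eqref{Ld1} this equals $-\epsilon\,\nabla_qV(q)$, which lies in $D^{\circ}_q$ only if $dV(q)$ annihilates $D_q$. (Concretely, for the nonholonomic particle with $V=z$ one gets $\bra -\epsilon\,dV(q),(1,0,y)\ket=-\epsilon\,y\neq0$; physically, a constrained particle at rest in a potential does not remain at rest, so the constant sequence is not a discrete trajectory.) The hypothesis $I_d\subset D_d$ in definition \ref{DiscNHSet} ensures consistency of the discrete constraint set \eqref{DLAb} near the diagonal, not that the diagonal solves the full system. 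Consequently your argument, like the nondegeneracy hypothesis itself (which is imposed only at points of $\gamma^{-1}_{(q_{k-1},q_k)}(0)\cap(\pi_1|_{D_d})^{-1}(q_k)$ and is vacuous when that set is empty), establishes local uniqueness and smooth persistence of solutions, but not existence of a base solution; existence must either be assumed or obtained by a separate argument (small step size, closeness to the exact discrete constraint), and cannot be extracted from the diagonal. A second, smaller omission: the proposition asserts a \emph{diffeomorphism} $F^{nh}_{L_d}:D_d\Flder D_d$, whereas your construction yields only a smooth map; smoothness of the inverse requires the symmetric solve for $q_{k-1}$ along the fibers of $\pi_2|_{D_d}$, i.e.\ the corresponding nondegeneracy of $D_1D_2L_d$, which you do not address.
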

We point out, that this regularity condition has been previously introduced in \cite{CoMa} phrased as follows: the discrete nonholonomic flow map $F_{L_d}^{nh}:D_d\Flder D_d$ is uniquely defined as a local diffeomorphism if and only if the matrix
\begin{equation}\label{RegularityDLA}
\lp
\begin{array}{cc}
D_1D_2L_d(q_k,q_{k+1}) & \lp\mu^{\alpha}(q_k)\rp\\\\
D_2\phi^{\alpha}_d(q_k,q_{k+1}) &0
\end{array}
\rp
\end{equation}
is invertible. Roughly speaking, this regularity condition, when it holds, allows us to determine $\lambda_{\alpha}=(\lambda_k)_{\alpha}$ in \eqref{DLA} in terms of the other variables and therefore to choose it appropriately as mentioned above. The form of $(\lambda_k)_{\alpha}$ depends on the discretization constraints $\phi^{\alpha}_d$ and is quite involved in general. We refer to \S\ref{examples} for particular examples. Again, everything in this section is true for a general configuration manifold $Q$.

We will refer to the algorithm described by \eqref{NHFlow} as a {\it nonholonomic variational integrator} in analogy to the unconstrained case.

\begin{remark}
{\rm
The discrete nonholonomic flow map generates points $(q_k,$ $q_{k+1})$ belonging to $D_d$, from points $(q_{k-1},q_k)\in D_d$. As mentioned before, we look for some conditions guaranteing that  $v_{q_k}\in D_{q_k}$ for any $k$. This is not straightforward from the relations in \eqref{DLA}, since a definition of the pair $(q_k,v_k)$ from $(q_{k-1},q_k)$ is required. This issue will be discussed below. 
}
\end{remark}

\begin{remark}
{\rm
The property that $v_{q_k}\in D_{q_k}$ for any $k$, is automatically achieved if $q_{k+1}$ and $q_k$ are connectable by the time-continuous nonholonomic flow map $F^{nh}_t:D\Flder D$ at time $\epsilon$, i.e., if $F^{nh}_{\epsilon}(v_{q_k})=v_{q_{k+1}}$. In \cite{McPer} the subset of $Q\times Q$ consisiting of pairs $(q_k,q_{k+1})$ such that there exists a $C^2$ curve $q(t)$ joining $q_k$ and $q_{k+1}$ in time $\epsilon$, 
and such that the curve $(q(t),\dot q(t))$ satisfies the Lagrange-d'Alembert equations is called {\it exact discrete constraint distribution} and denoted by $D_d^E$. 
Moreover, it is proved that there exist smooth coordinates realizing $D_d^E$ as a submanifold of $Q\times Q$ of dimension $2n-m$. Here we do not explore this object further, rather we are going to address the issue raised in the previous remark directly. 
}

\end{remark}

\subsection{Construction of nonholonomic variational integrators by means of finite difference maps}
As has been shown, the definition of a discrete nonholonomic system requires the tuple $(Q\times Q,L_d,D_d)$ to be specified. For some reasons (which will become clear in the section devoted to examples), in many cases it is preferable to specify $L_d$ and $D_d$ by means of a so-called {\it finite difference map} $\rho$ (see \cite{McPer} for further details).
\begin{definition}\label{FDM}
A {\rm finite difference map} $\rho$ is a diffeomorphism $\rho:U(I_d)\Flder V(Z)$, where $U(I_d)$ is a neighborhood of the diagonal $I_d$ in $Q\times Q$, and $V(Z)$ denotes a neighborhood of the zero section of $TQ$, i.e $Z:Q\Flder TQ$ s.t. $Z(q)=0_q\in T_qQ$, which satisfies the following conditions:
\begin{enumerate}
\item $\rho(I_d)=Z$,
\item $\tau_Q\circ\rho(U(I_d))=Q$, where $\tau_Q:TQ\Flder Q$ is the canonical projection,
\item $\tau\circ\rho|_{I_d}=\pi_1|_{I_d}=\pi_2|_{I_d}$, where $\pi_i$ are the projections from $Q\times Q$ to $Q$.
\end{enumerate}
\end{definition}
Next, we give a  local definition of $\rho$, i.e. $\rho_{q}:U_{q}\Flder\tilde U_{q}\times\R^n$:
\begin{equation}\label{roloc}
\rho_q(\bar q)=\rho(q,\bar q)=\lp g^i_q(\bar q),f^i_q(\bar q)\rp\in T_{_{g_q(\bar q)}}Q,
\end{equation}  
where $\bar q\in U_q$, and $g^i_{q},f^i_{q}:U_{q}\Flder\R$, with $i=1,...,n$ are smooth functions. In other words, $f^i_q(\bar q)$ are the local coordinates of $v_{_{g_q(\bar q)}}\in T_{_{g_q(\bar q)}}Q$ (recall that, locally, $\lp g_q(\bar q),f_q(\bar q)\rp\in V(Z(q))$). From the definition of $\rho$ in definition \ref{FDM}, these functions must satisfy two conditions, namely
\[
g_q(q)=q\,\,\,\, \mbox{and}\,\,\,\,f_{q}(q)=0.
\]
For the sake of brevity, we set $\tilde q:=g_q(\bar q)\in \tilde U_q$ and $v_{\tilde q}:=f_q(\bar q)\in\R^n$.

The finite difference map $\rho$ is crucial in our construction since it can be used to define both the discrete Lagrangian function and the discrete constraint distribution $D_d$ from the continuous one as stated in the following proposition (recall that $D_d$ is defined by the annihilation of the functions $\phi^{\alpha}_d$).
\begin{proposition}\label{Ff}
Given a diffeomorphism $\rho$, define the functions $\phi_d:U(I_d)\Flder \R^{2n-m}$ by
\[
\phi^{\alpha}_d=\mu^1\circ\rho\times...\times\mu^{2n-m}\circ\rho=:\mu^{\alpha}\circ\rho.
\]
Then, $\phi_d$ is a submersion, so in particular, $0$ is a regular value of $\phi_d$ and consequently {\rm $D_d:=\phi_d^{-1}(0)$} is a well-defined discrete constraint submanifold in the terms of definition \ref{DiscNHSet}.
\end{proposition}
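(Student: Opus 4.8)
The plan is to recognize $\phi_d$ as the composition of the finite difference diffeomorphism $\rho$ with the fibre-linear map on $TQ$ induced by the constraint one-forms, and then to transport the submersion property of the latter through $\rho$. Concretely, I would view each one-form $\mu^{\alpha}$, $\alpha=1,\dots,m$, as the function $TQ\Flder\R$, $v_q\mapsto\bra\mu^{\alpha}(q),v_q\ket=\mu^{\alpha}_i(q)\,v^i$, and assemble these into a single map $\hat\mu=(\mu^1,\dots,\mu^m):TQ\Flder\R^m$. Using the local description $\rho(q,\bar q)=\lp g_q(\bar q),f_q(\bar q)\rp$ from \eqref{roloc}, one has $\phi^{\alpha}_d(q,\bar q)=\mu^{\alpha}_i\lp g_q(\bar q)\rp\,f^i_q(\bar q)$, that is, $\phi_d=\hat\mu\circ\rho$ on $U(I_d)$ (so the target is $\R^m$, consistent with the dimension count in definition \ref{DiscNHSet}).

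First I would show that $\hat\mu$ is a submersion at every point of $TQ$. This is exactly the rank computation already performed for $\varphi^{\alpha}(y)=\mu^{\alpha}_i(q)\,v^i$ in $\S$\ref{NonHoloProblem}: the Jacobian of $\hat\mu$ in coordinates $(q^i,v^i)$ is $\lp\frac{\der\mu^{\alpha}_i}{\der q^j}v^i\,,\,\mu^{\alpha}_j\rp$, whose $v$-block $(\mu^{\alpha}_j)$ has rank $m$ by the assumed linear independence of the one-forms $\mu^{\alpha}$. Hence $\nabla\hat\mu$ is surjective independently of the base point, so $\hat\mu$ is a submersion on all of $TQ$, and in particular on the image $V(Z)$ of $\rho$.

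Next, since $\rho$ is a diffeomorphism, its differential $d\rho$ is a linear isomorphism at each point of $U(I_d)$. By the chain rule $d\phi_d=d\hat\mu\cdot d\rho$, and the composition of a surjective linear map with an isomorphism is again surjective; therefore $\phi_d$ is a submersion on all of $U(I_d)$, so $0$ is a regular value, i.e. $U(I_d)$ coincides with the regularity set $\mathcal{R}(\phi_d,U(I_d))$ in the notation of theorem \ref{Subma}. Applying that theorem with $l=2n$ and $m$ constraints then yields that $D_d=\phi_d^{-1}(0)$ is a $C^{\infty}$ submanifold of $Q\times Q$ of dimension $2n-m$, as required.

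Finally I would verify nonemptiness and the diagonal condition $I_d\subset D_d$, which also ensures $0\in\phi_d\lp\mathcal{R}(\phi_d,U(I_d))\rp$. Condition $(1)$ of definition \ref{FDM} gives $\rho(q,q)=Z(q)=0_q$, whence $\phi^{\alpha}_d(q,q)=\mu^{\alpha}_i(q)\cdot 0=0$ for every $q$; thus $I_d\subset D_d$ and $D_d$ is nonempty. I do not expect a genuine obstacle here, since the whole argument is simply the transport of a submersion through a diffeomorphism. The only points demanding care are the correct reading of $\mu^{\alpha}\circ\rho$ as the pairing $\bra\mu^{\alpha},\cdot\ket$ evaluated on the $TQ$-valued map $\rho$, and the observation that the rank-$m$ property of $(\mu^{\alpha}_j)$ holds pointwise on all of $TQ$ rather than merely near $I_d$, so that no shrinking of $U(I_d)$ is necessary.
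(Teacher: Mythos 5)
Your proposal is correct: the paper itself gives no inline proof of this proposition (it defers to the cited reference \cite{McPer}), and your argument --- reading $\phi_d$ as the pairing map $\hat\mu:TQ\Flder\R^m$ composed with the diffeomorphism $\rho$, noting that $\hat\mu$ is a submersion everywhere since the block $(\mu^{\alpha}_j)$ has rank $m$, and then checking $I_d\subset D_d$ via $\rho(q,q)=0_q$ --- is exactly the standard argument that the reference supplies. You also correctly repaired the paper's typo by observing that the target must be $\R^m$ (not $\R^{2n-m}$) for the dimension count $\dim D_d=2n-m$ of definition \ref{DiscNHSet} to come out right.
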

See \cite{McPer} for the proof. Thus, it is sufficient to consider the continuous one-forms $\mu^{\alpha}$ and, in addition, the difference map $\rho$ in order to construct the discrete constrained distribution $D_d$. Notice that we could have chosen any set of $m$ independent functions $\psi^{\alpha}:TQ\Flder\R$ in the previous proposition to define $D_d$, namely $\phi^{\alpha}=\psi^{\alpha}\circ\rho$. However we pick $\mu^{\alpha}:TQ\Flder\R$ (the one-forms defining $D$), a choice that has some advantages as we'll see shortly in proposition \ref{propoDcont}.
Note as well that in the previous proposition, the discrete functions $\phi_d$ are not defined everywhere on $Q\times Q$, but in a neighborhood of its diagonal only.

\begin{definition}\label{VNI}
Consider the commutative diagram
\begin{equation}\label{diagram}
\xymatrix{
(q_{k-1},q_k)\ar[rr]^{F_{L_d}^{nh}}\ar[dd]_{\rho} & & (q_k,q_{k+1})\ar[dd]^{\rho}\\\\
(\tilde q_k,v_{\tilde q_k})\ar[rr]_{\tilde F_{L_d}} & & (\tilde q_{k+1},v_{\tilde q_{k+1}})
} 
\end{equation} 
where $q_{k}\in U_{q_{k-1}}$ and $q_{k+1}\in U_{q_{k}}$. We define the local flow $\tilde F_{L_d}: T_{\tilde q_k}Q\Flder T_{\tilde q_{k+1}}Q$
by
\[
\tilde F_{L_d}:=\rho\circ F_{L_d}^{nh}\circ \rho^{-1},
\]
which we call a {\rm velocity nonholonomic variational integrator}. 
\end{definition}

In the next proposition we establish a sufficient condition for $\tilde F_{L_d}$ to preserve the nonholonomic distribution $D$, in other words $\tilde F_{L_d}:D_{\tilde q_k}\Flder D_{\tilde q_{k+1}}$.
\begin{proposition}\label{propoDcont}
Assume that $(\tilde q_k,v_{\tilde q_k})\in D_{\tilde q_k}$ and that $D_d$ is defined as in proposition \ref{Ff}, that is using $\phi_d^{\alpha}=\mu^{\alpha}\circ\rho$. Then $(\tilde q_{k+1},v_{\tilde q_{k+1}})$ defined by $\tilde F_{L_d}$ belongs to $D_{\tilde q_{k+1}}$. In other words, $\tilde F_{L_d}:D_{\tilde q_k}\Flder D_{\tilde q_{k+1}}$. 
\end{proposition}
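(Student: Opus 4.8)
The plan is to exploit the fact that the discrete constraint manifold $D_d$ has been tailored, through the choice $\phi_d^{\alpha}=\mu^{\alpha}\circ\rho$ in proposition \ref{Ff}, precisely so that $\rho$ carries $D_d$ onto the continuous distribution $D$. Everything then reduces to a diagram chase in \eqref{diagram}, using the fact that the discrete nonholonomic flow map $F_{L_d}^{nh}$ preserves $D_d$ by its very construction.

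First I would make the key equivalence explicit. Writing $\rho(q,\bar q)=(\tilde q,v_{\tilde q})\in T_{\tilde q}Q$ as in \eqref{roloc}, and regarding each $\mu^{\alpha}$ as the fibrewise-linear function on $TQ$ given in \eqref{LC}, one has $\mu^{\alpha}\circ\rho(q,\bar q)=\bra\mu^{\alpha}(\tilde q),v_{\tilde q}\ket$. Hence $(q,\bar q)\in D_d=\phi_d^{-1}(0)$ holds if and only if $\bra\mu^{\alpha}(\tilde q),v_{\tilde q}\ket=0$ for all $\alpha$, which is exactly the condition $v_{\tilde q}\in D_{\tilde q}$. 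In other words, for any pair in $U(I_d)$,
\[
(q,\bar q)\in D_d\quad\Longleftrightarrow\quad \rho(q,\bar q)\in D.
\]

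Next I would chase the diagram. The hypothesis $(\tilde q_k,v_{\tilde q_k})\in D_{\tilde q_k}$, together with $(\tilde q_k,v_{\tilde q_k})=\rho(q_{k-1},q_k)$, gives $(q_{k-1},q_k)\in D_d$ by the equivalence. Applying $F_{L_d}^{nh}$, and recalling from \eqref{NHFlow} and proposition \ref{REG} that under the stated regularity $F_{L_d}^{nh}:D_d\Flder D_d$ is a well-defined map of $D_d$ into itself, I obtain $(q_k,q_{k+1})=F_{L_d}^{nh}(q_{k-1},q_k)\in D_d$. Applying the equivalence a second time to $\rho(q_k,q_{k+1})=(\tilde q_{k+1},v_{\tilde q_{k+1}})$ yields $v_{\tilde q_{k+1}}\in D_{\tilde q_{k+1}}$, which is the claim, since $\tilde F_{L_d}=\rho\circ F_{L_d}^{nh}\circ\rho^{-1}$ by definition \ref{VNI}.

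The only genuinely substantive point is the first step: one must verify that the particular choice $\phi_d^{\alpha}=\mu^{\alpha}\circ\rho$ makes $D_d$ the exact $\rho$-preimage of $D$, so that $\rho$ intertwines membership in $D_d$ with membership in $D$. This is where the remark following proposition \ref{Ff}---that one deliberately selects the constraint one-forms $\mu^{\alpha}$ rather than arbitrary functions $\psi^{\alpha}$---pays off. Once this identification is in place, preservation of $D$ follows automatically from the built-in preservation of $D_d$ by the discrete nonholonomic flow, with no further computation required; in particular, the argument is entirely independent of the explicit form of the Lagrange multipliers $(\lambda_k)_{\alpha}$ solved from \eqref{DLA}.
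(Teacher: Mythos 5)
Your proposal is correct and follows essentially the same route as the paper's own proof: both establish that the choice $\phi_d^{\alpha}=\mu^{\alpha}\circ\rho$ makes membership in $D_d$ equivalent, via $\rho$, to membership in $D$, and then chase the commutative diagram \eqref{diagram} using the fact that $F_{L_d}^{nh}$ maps $D_d$ into $D_d$. Your write-up is if anything slightly more careful than the paper's, since you state the $\rho$-equivalence as a genuine biconditional and explicitly invoke proposition \ref{REG} for the well-definedness of the discrete flow, but the underlying argument is identical.
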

\begin{proof}
We are assuming that $(\tilde q_k,v_{\tilde q_k})\in D_{\tilde q_k}$. Considering that $(\tilde q_k,v_{\tilde q_k})$ and $(q_{k-1},q_k)$ are $\rho-$related this inmediately implies that $(q_{k-1},q_k)\in D_d$ since we are assuming that $D_d$ (proposition \ref{Ff}) is defined by the annihilation of  $\phi^{\alpha}_d=\mu^{\alpha}\circ\rho$ (note that it would be enough to consider one-forms proportional to $\mu^{\alpha}$, i.e. $\phi^{\alpha}_d\propto\mu^{\alpha}\circ\rho$). In other words
\[
\mu^{\alpha}(\tilde q_k)v_{\tilde q_k}=0\,\Flder \mu^{\alpha}(g_{q_{k-1}}(q_k))f_{q_{k-1}}(q_k)=0\Flder (q_{k-1},q_k)\in D_d.
\]
By diagram \eqref{diagram}, $(q_k,q_{k+1})\in D_d$, which applying $\rho$ implies
\[
\mu^{\alpha}(g_{q_{k}}(q_{k+1}))f_{q_{k}}(q_{k+1})=\mu^{\alpha}(\tilde q_{k+1})v_{\tilde q_{k+1}}=0,
\]
which implies that $(\tilde q_{k+1},v_{\tilde q_{k+1}})\in D_{\tilde q_{k+1}}$ as claimed.
\end{proof}


\begin{remark}\label{question}
{\rm
The previous assertion includes a redefinition of the original nodes, that is $\lc q_k\rc\Flder \lc \tilde q_k\rc$, and suggests a particular choice of the discrete constraints $\phi^{\alpha}_d$. In general, the discrete nonholonomic variational integrator $F_{L_d}^{nh}$ will not define, through $\rho$, a velocity nonholonomic integrator $\tilde F_{L_d}$ which preserves the nonholonomic distribution $D$ at the original nodes. 
Thus, the question arises, whether this is true for a perturbed constraint manifold. We are going to discuss this question in case of the nonholonomic particle example below.
}
\end{remark}

\begin{remark}
{\rm
Given a complete sequence $\lc q_0,q_1,q_2,...,q_{N-1},q_N\rc$, the generation of new nodes by the finite difference map requires a choice for the end points, i.e. either $\tilde q_0$ or $\tilde q_N$. This can be seen easily in the following way: In general we will choose the new nodes to be $\tilde q_{k+1}=g_{q_k}(q_{k+1})$, which defines univocally the sequence $\lc\tilde q_1,...,\tilde q_N\rc$; therefore $\tilde q_0$ remains undefined and we have to choose it independently, a natural choice being  $\tilde q_0=q_0$. Similarly, 
if we define the new nodes to be  $\tilde q_{k}=g_{q_k}(q_{k+1})$, then $\tilde q_N$ is the undefined end point, a natural choice being $\tilde q_N=q_N$. The same holds true for the velocities $v_{\tilde q}=f_{q}(\bar q).$
}
\end{remark}
\begin{remark}
{\rm
Notice that the discretization of the nonholonomic problem established in this section provides a set of discrete multipliers besides of the discrete dynamical variables $(q_k,v_{q_k})$. More concretely, the regularity condition \eqref{RegularityDLA} ensures that $\lambda_{\alpha}$ can be worked out in terms of $(q_{k-1},q_k,q_{k+1})$ and therefore, through the finite difference map, in terms of $(q_k,v_{q_k}).$ The accuracy of these discrete multipliers with respect to the continuous evolution $\lambda(t)=\lambda(q(t),v(t))$ is determined by comparison to \eqref{lambdax}.
}
\end{remark}
As mentioned before, $\rho$ will also be used to define the discrete Lagrangian $L_d:Q\times Q\Flder\R$, accomplishing the task of defining the discrete nonholonomic system $(Q\times Q,L_d,D_d)$, particularly by
\[
L_d=\epsilon\, L\circ\rho,
\]
where $L$ denotes the continuous Lagrangian and $\epsilon$ the step size of the discretization. 

As before, all the considerations in this section make sense for a general configuration manifold $Q$. In the next section we shall restrict again to $\R^n$ by considering two particular examples of $\rho$ parametrized by $\beta$, namely
\begin{equation}\label{robeta}
\rho^{\beta}(q_k,q_{k+1})=\lp(1-\beta)\,q_k+\beta\,q_{k+1}\,,\,\frac{q_{k+1}-q_k}{\epsilon}\rp,
\end{equation}
corresponding to $g_q(\bar q)=(1-\beta)\,q+\beta\,\bar q$ and $f_q(\bar q)=\frac{\bar q-q}{\epsilon}$ in \eqref{roloc}. These $\rho^{\beta}$ lead to the following family of discrete Lagrangians
\[
L_d^{\beta}(q_k,q_{k+1})=\epsilon\,L\lp(1-\beta)\,q_k+\beta\,q_{k+1}\,,\,\frac{q_{k+1}-q_k}{\epsilon}\rp. 
\]
As pointed out in \eqref{aprox}, discrete Lagrangians are approximations of the action integral over time intervals of length $\epsilon$. Next, using usual arguments based on Taylor expansions, we are able to determine the order of consistency of $L_d^{\beta}(q_k,q_{k+1})$ with respect to $\int_0^{\epsilon}L(q(t),\dot q(t))dt$ (see \cite{MarsdenWest} for more details):

\[
\int_0^{\epsilon}L(q(t),\dot q(t))\,dt=\epsilon\,L(q,\dot q)+\frac{\epsilon^2}{2}\,\lp\frac{\der L}{\der q}\,\dot q+\frac{\der L}{\der\dot q}\,\ddot q\rp+O(\epsilon^3),
\]
where $q(0)=q$ and $\dot q(0)=\dot q$. On the other hand, recalling that $q_k\simeq q(0)$ and $q_{k+1}\simeq q(\epsilon)$, we find that
\[
L_d^{\beta}(q_k,q_{k+1})=\epsilon\,L(q,\dot q)+\frac{\epsilon^2}{2}\,\lp 2\beta\,\frac{\der L}{\der q}\,\dot q+\frac{\der L}{\der\dot q}\,\ddot q\rp+O(\epsilon^3).
\]
Therefore, if we consider $\beta\in [0,1]$, $L_d^{\beta}(q_k,q_{k+1})$ is second-order consistent with respect to the action integral if $\beta=1/2$, otherwise it is only consistent of order one. In the examples we shall consider the cases $\beta=0$ and $\beta=1/2$.

\section{Examples}\label{examples}
In this section we focus on the particular class of {\it simple mechanical Lagrangians} when $Q=\R^n$. We construct nonholonomic variational integrators according to the procedure shown in the previous section and study their consistency properties with respect to the continuous dynamics. Furthermore, we focus on the example of the nonholonomic particle, studying carefully how particular discretizations generate numerical schemes that preserve the nonholonomic distribution $D$, and to which extent $D$ is perturbed otherwise.

\subsection{The class of simple mechanical Lagrangians}
Assume that the Lagrangian function $L:TQ\Flder\R$ is of the form $L=T-V$, where $T$ is the kinetic energy associated with the Euclidean metric on $\R^n$ and $V$ is the potential energy, i.e. 
\[
L(q,\dot q)=\frac{1}{2}\,\dot q^TM\dot q-V(q),
\]
where $M$ is the $n\times n$ mass matrix. The equations  \eqref{NhDAE3} become
\begin{subequations}\label{DAEMec}
\begin{align}
\dot q&=v,\\
\dot v&=-M^{-1}\nabla_qV(q)+\lambda_{\alpha}\,M^{-1}\,\mu^{\alpha}(q),\\
0&=\mu^{\alpha}(q)\,v,
\end{align} 
\end{subequations}
where $M^{-1}\nabla_qV(q)=(M^{-1})^{ij}\frac{\der V(q)}{\der q^j}$, $\lambda_{\alpha}\,M^{-1}\,\mu^{\alpha}(q)=\lambda_{\alpha}\,(M^{-1})^{ij}\,\mu^{\alpha}_j(q)$ and, of course, $\mu^{\alpha}(q)\,v=\mu^{\alpha}_i(q)\,v^i$. According to proposition \ref{h}, equations \eqref{DAEMec} induce an ODE on $D$ given by
\begin{subequations}\label{ODExMec}
\begin{align}
\dot q&= v,\label{ODExMeca}\\
\dot v&=-M^{-1}\nabla_qV(q)+\lambda_{\alpha}(q,v)\,M^{-1}\,\mu^{\alpha}(q),\label{ODExMecb}
\end{align}
\end{subequations}
where 
\begin{equation}\label{lambdaQV}
\lambda_{\alpha}(q,v)=-C_{\alpha\beta}(q)\lp v\,\frac{\der\mu^{\beta}(q)}{\der q}\,v-\mu^{\beta}(q)M^{-1}\nabla_qV(q)\rp 
\end{equation}
according to \eqref{lambdax} ($v\,\frac{\der\mu^{\beta}(q)}{\der q}\,v=\frac{\der\mu^{\beta}_i(q)}{\der q^j}\,v^i\,v^j$ and $\mu^{\beta}(q)M^{-1}\nabla_qV(q)=$ \\$\mu^{\beta}_i(q)(M^{-1})^{ij}\frac{\der V(q)}{\der q^j}$), and such that $\mu^{\alpha}(q)\,v=0$. In this particular case, $\lp C_{\alpha\beta}\rp$ is the inverse matrix of $\lp C^{\alpha\beta}\rp=\lp \mu^{\alpha}(q)M^{-1}\mu^{\beta}(q)\rp$. Employing the coordinates \eqref{Embedd}, the ODE \eqref{ODExi} reads
\begin{subequations}\label{ODExiConnec}
\begin{align}
\dot q^a&=v^a,\label{ODExiConneca}\\
\dot q^{\alpha}&=-A^{\alpha}_a(q)\,v^a,\label{ODExiConnecb}\\
\dot v^a&=-M^{aj}\nabla_{q^j}V(q)+\lambda_{\alpha}(q,v)\lp M^{ab}A^{\alpha}_b(q)+M^{a\alpha}\rp,\label{ODExiConnecc}
\end{align}
\end{subequations}
where we use the Ehresmann connection $\mu^{\alpha}_i(q)=(A^{\alpha}_a(q),\delta^{\alpha}_{\beta})$. In addition, 

\begin{multline*}
\lambda_{\alpha}(q,v)=-C_{\alpha\beta}(q)\left( v^b\frac{\der A^{\beta}_b(q)}{\der q^a}v^a-v^b\frac{\der A^{\beta}_b(q)}{\der q^{\gamma}}A^{\gamma}_a(q)\,v^a \,\,\,\,\,\right.\\
\,\,\left.-A^{\beta}_a(q)M^{aj}\nabla_{q^j}V(q)-M^{\beta j}\nabla_{q^j}V(q)\right).
\end{multline*}

\subsection{$(1,0)$ linearly explicit integrator} 
Consider the finite difference map
\begin{equation}\label{ro1}
\rho(q_k,q_{k+1})=\lp q_k,\frac{q_{k+1}-q_k}{\epsilon}\rp,
\end{equation}
which corresponds to the case $\beta=0$ in \eqref{robeta}. Consider the discrete Lagrangian $L_d:Q\times Q\Flder\R$ generated by $\rho$:
\begin{equation}\label{Ld1}
L_d(q_k,q_{k+1})=\frac{1}{2\epsilon}(q_{k+1}-q_k)^T\,M\,(q_{k+1}-q_k)-\epsilon\,V(q_k).
\end{equation}
It has been proved above that this discrete Lagrangian gives rise to a first order consistent approximation of the action integral over a time interval of lenght $\epsilon$; therefore, in the spirit of \cite{MarsdenWest}, we expect a first order consistent integrator for the nonholonomic flow. The nonholonomic integrator \eqref{DLA} is given by 
\begin{eqnarray*}
\frac{q_{k+1}-2q_k+q_{k-1}}{\epsilon^2}+M^{-1}\,\nabla_qV(q_k)&=&(\lambda_{k+1})_{\alpha}M^{-1}\,\mu^{\alpha}(q_k),\\
\mu^{\alpha}(q_k)\,\frac{q_{k+1}-q_k}{\epsilon}&=&0,
\end{eqnarray*}
where the rescaling $\lambda_{k+1}\mapsto-\lambda_{k+1}/\epsilon$ has been performed (equivalently this accounts for the choice $\phi^{\alpha}_d=-\epsilon\,\,\mu^{\alpha}\circ\rho$, which keeps us in the case $\phi^{\alpha}_d\propto \mu^{\alpha}\circ\rho$ and the conditions established by proposition \ref{propoDcont}). Considering the particular form of $\rho$, a velocity nonholonomic integrator is defined by $\tilde q_{k+1}:=q_k$ and $v_{\tilde q_{k+1}}:=v_{k+1}:=(q_{k+1}-q_k)/\epsilon$, which leads to the velocity formulation $\tilde F_{L_d}$:
\begin{subequations}\label{DAELd1}
\begin{align}
\tilde  q_{k+1}&=\tilde q_k+\epsilon\,v_k,\label{DAELd1a}\\
v_{k+1}&=v_k-\epsilon\,M^{-1}\nabla_qV(\tilde q_{k+1})+\epsilon\,(\lambda_{k+1})_{\alpha}M^{-1}\mu^{\alpha}(\tilde q_{k+1}),\label{DAELd1b}\\
\mu^{\alpha}(\tilde q_{k+1})\,v_{k+1}&=0,\label{DAELd1c}
\end{align}
\end{subequations}
where the initial condition should satisfy the constraint $\mu^{\alpha}(\tilde q_k)\,v_k=0$. As expected, this integrator respects the nonholonomic constraint described by $D$, which is ensured by the equation \eqref{DAELd1c}. Using usual arguments based on Taylor expansions, one can check that the previous integrator is first-order with respect to the time-continuous equations \eqref{DAEMec}. The discrete Lagrange multipliers $\lambda_{k+1}$ can be determined by inserting \eqref{DAELd1b} into \eqref{DAELd1c} and solving the linear system
\[
(\lambda_{k+1})_{\alpha}\,\mu^{\alpha}(\tilde q_{k+1})\,M^{-1}\,\mu^{\beta}(\tilde q_{k+1})=\mu^{\beta}(\tilde q_{k+1})\lp M^{-1}\,\nabla_qV(\tilde q_{k+1})-v_k/\epsilon\rp,
\]
i.e. 
\begin{equation}\label{LamLd1}
(\lambda_{k+1})_{\alpha}=-\frac{1}{\epsilon}\,C_{\alpha\beta}(\tilde q_{k+1})\mu^{\beta}(\tilde q_{k+1})\lp v_k-\epsilon\,M^{-1}\,\nabla_qV(\tilde q_{k+1})\rp
\end{equation}
which makes the method linearly explicit (note that $\lambda_{k+1}$ depends on $v_k$ and $\tilde q_{k+1}$, which, at the same time, depends on $\tilde q_k$ and $v_k$; therefore the right hand side of \eqref{DAELd1a} and \eqref{DAELd1b} depends explicitly on $\tilde q_k$ and $v_k$). 
\begin{proposition}\label{firstorder}
The velocity nonholonomic integrator represented by the scheme \eqref{DAELd1} and \eqref{LamLd1} (determined by the finite difference map \eqref{ro1}) is a $(1,0)$ consistent discretization (in the sense of definition \ref{pdisc}) of the nonholonomic flow given by the $ODE$ in \eqref{ODExMec}.
\end{proposition}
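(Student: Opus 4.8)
The plan is to run a standard one-step (local truncation) consistency analysis, comparing a single step of the map \eqref{DAELd1}--\eqref{LamLd1} with the exact solution of the continuous system \eqref{ODExMec}, \eqref{lambdaQV} issued from the current discrete node. I first note that the distribution-preservation built into definition \ref{pdisc}, i.e. $v_k\in D_{\tilde q_k}$ for every $k$, is automatic here: if the initial data satisfies $\mu^{\alpha}(\tilde q_k)\,v_k=0$, then the discrete constraint \eqref{DAELd1c} forces $\mu^{\alpha}(\tilde q_{k+1})\,v_{k+1}=0$, so all iterates lie on $D$. It then remains to determine the orders of the errors in $q$, $v$ and $\lambda$. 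Accordingly I fix a node $(\tilde q_k,v_k)\in D$ and let $(q(t),v(t),\lambda(t))$ be the exact solution of \eqref{ODExMec}, \eqref{lambdaQV} with $q(t_k)=\tilde q_k$, $v(t_k)=v_k$, and I compare $q(t_k+\epsilon)$, $v(t_k+\epsilon)$, $\lambda(t_k+\epsilon)$ with $\tilde q_{k+1}$, $v_{k+1}$, $(\lambda_{k+1})_{\alpha}$ produced by the scheme.

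The position error is immediate: by \eqref{ODExMeca} a Taylor expansion gives $q(t_k+\epsilon)=\tilde q_k+\epsilon\,v_k+O(\epsilon^2)$, whereas \eqref{DAELd1a} reads $\tilde q_{k+1}=\tilde q_k+\epsilon\,v_k$, so $|q(t_k+\epsilon)-\tilde q_{k+1}|=O(\epsilon^2)$ and $r=1$. The subtle step, which I expect to be the main obstacle, is the Lagrange multiplier: formula \eqref{LamLd1} carries an explicit prefactor $1/\epsilon$ and is therefore only bounded because of a cancellation. Since $v_k\in D_{\tilde q_k}$ we have $\mu^{\beta}_i(\tilde q_k)\,v_k^i=0$, and because $\tilde q_{k+1}=\tilde q_k+\epsilon\,v_k$ a first-order expansion yields
\[
\mu^{\beta}_i(\tilde q_{k+1})\,v_k^i=\epsilon\,\frac{\der\mu^{\beta}_i(\tilde q_k)}{\der q^j}\,v_k^i v_k^j+O(\epsilon^2).
\]
Substituting this into \eqref{LamLd1}, the factor $\epsilon$ absorbs the $1/\epsilon$, and after replacing $\tilde q_{k+1}$ by $\tilde q_k$ up to $O(\epsilon)$ I expect to recover exactly the continuous expression \eqref{lambdaQV}, namely $(\lambda_{k+1})_{\alpha}=\lambda_{\alpha}(\tilde q_k,v_k)+O(\epsilon)$ for each $\alpha$. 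Comparing with $\lambda(t_k+\epsilon)=\lambda(\tilde q_k,v_k)+O(\epsilon)$ (using that $\lambda(t)$ is $C^1$ along the exact flow) gives $|\lambda(t_k+\epsilon)-\lambda_{k+1}|=O(\epsilon)$, hence $s=0$.

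Finally, for the velocity I substitute the multiplier estimate into \eqref{DAELd1b}. Using $\nabla_qV(\tilde q_{k+1})=\nabla_qV(\tilde q_k)+O(\epsilon)$, $\mu^{\alpha}(\tilde q_{k+1})=\mu^{\alpha}(\tilde q_k)+O(\epsilon)$ and $(\lambda_{k+1})_{\alpha}=\lambda_{\alpha}(\tilde q_k,v_k)+O(\epsilon)$, each correction is multiplied by the overall factor $\epsilon$ already present in \eqref{DAELd1b}, so that
\[
v_{k+1}=v_k+\epsilon\lp-M^{-1}\nabla_qV(\tilde q_k)+\lambda_{\alpha}(\tilde q_k,v_k)\,M^{-1}\mu^{\alpha}(\tilde q_k)\rp+O(\epsilon^2)=v(t_k+\epsilon)+O(\epsilon^2),
\]
the last equality following from \eqref{ODExMecb} and Taylor expansion; thus $l=1$. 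Combining the three estimates gives $\min(r,l)=1=p$ together with $s=0$, so the scheme is a $(1,0)$ discretization in the sense of definition \ref{pdisc}. The only genuinely nontrivial ingredient is the cancellation of the $1/\epsilon$ term in the multiplier, which rests crucially on the on-distribution condition $v_k\in D_{\tilde q_k}$.
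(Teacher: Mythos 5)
Your proof is correct and follows essentially the same route as the paper's: a one-step Taylor-expansion (local truncation) analysis whose crux is the cancellation of the $1/\epsilon$ prefactor in \eqref{LamLd1} via the on-distribution condition $\mu^{\alpha}(\tilde q_k)\,v_k=0$, followed by comparison with \eqref{ODExMec} and \eqref{lambdaQV}. The only cosmetic differences are organizational: you establish the multiplier estimate $(\lambda_{k+1})_{\alpha}=\lambda_{\alpha}(\tilde q_k,v_k)+O(\epsilon)$ once and reuse it for both the velocity step and the conclusion $s=0$, keeping $V$ throughout, whereas the paper sets $V=0$ in the velocity estimate and separately expands the multiplier to exhibit the $O(\epsilon)$ discrepancy terms explicitly.
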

\begin{proof}
For the sake of simplicity we set $V=0$. The strategy of our proof is to use Taylor expansions to show that $|q(t_k+\epsilon)-\tilde q_{k+1}|\sim\, O(\epsilon^2)$ and $|v(t_k+\epsilon)-v_{k+1}|\sim\, O(\epsilon^2)$, where the continuous functions $q(t),\,v(t)$ are determined by \eqref{ODExMec}, while the discrete quantities $\tilde q_{k+1},\,v_{k+1}$ are determined by \eqref{DAELd1}. Since the assertion is easy to prove for $q(t)$ and $\tilde q_{k+1}$, we focus on the $v$ part.

Taking into account that $\tilde q_{k+1}=\tilde q_k+\epsilon\,v_k$ and expanding $\mu^{\beta}(\tilde q_{k+1})$ in \eqref{LamLd1} around $\tilde q_k$ we arrive at
\[
\begin{split}
(\lambda_{k+1})_{\alpha}=&-\frac{1}{\epsilon}C_{\alpha\beta}(\tilde q_{k+1})\left(\mu^{\beta}(\tilde q_{k})\,v_k+\epsilon\,v_k\frac{\der\mu^{\beta}(\tilde q_k)}{\der q}\,v_k\right.\\&\left.-\epsilon\,\mu^{\beta}(\tilde q_{k+1})\,M^{-1}\,\nabla_qV(\tilde q_{k+1})+O(\epsilon^2)\right).
\end{split}
\]
Recall that the method \eqref{DAELd1} respects the nonholonomic constraint \eqref{DAELd1c}, i.e., \\ 
$\mu^{\alpha}(\tilde q_k)\,v_k=0$. Inserting the previous expression for $(\lambda_{k+1})_{\alpha}$ into  \eqref{DAELd1b} and expanding around $\tilde q_k$ we obtain
\begin{equation}\label{vExpan}
v_{k+1}=v_k-\epsilon\,C_{\alpha\beta}(\tilde q_k)\lp v_k\frac{\der\mu^{\beta}(\tilde q_k)}{\der q}\,v_k\rp\,M^{-1}\mu^{\alpha}(\tilde q_k)+O(\epsilon^2).
\end{equation}
On the other hand, $v(t_k+\epsilon)=v(t_k)+\epsilon\,\dot v(t_k)+O(\epsilon^2)$ or, taking into account that $v(t_k)\simeq v_k$, $v(t_k+\epsilon)=v_k+\epsilon\,\dot v_k+O(\epsilon^2)$. Using the equations \eqref{ODExMecb} and \eqref{lambdaQV}, we find out that $v(t_k+\epsilon)$ is equal to \eqref{vExpan} up to $O(\epsilon)$-terms. Thus, $|v(t_k+\epsilon)-v_{k+1}|\sim\, O(\epsilon^2)$ and therefore the $p=1$ order follows.

Now, we show the order of accuracy of the discrete Lagrange multipliers \eqref{LamLd1} with respect to the continuous expression \eqref{lambdaQV}. In particular,  \eqref{LamLd1} leads to
\begin{eqnarray*}
&&(\lambda_{k+1})_{\alpha}=-C_{\alpha\beta}(\tilde q_k)\lp v_k\frac{\der\mu^{\beta}(\tilde q_k)}{\der q}\,v_k-\mu^{\beta}(\tilde q_{k})M^{-1}\,\nabla_qV(\tilde q_{k})\rp\\
&&+\epsilon\,C_{\alpha\beta}(\tilde q_k)\lp\nabla_q\mu^{\beta}(\tilde q_k)v_kM^{-1}\nabla_qV(\tilde q_k)+\mu^{\beta}(\tilde q_k)M^{-1}\Delta V(\tilde q_k)v_k-\frac{1}{2}v_k\Delta \mu^{\beta}(\tilde q_k)v_k^2\rp\\
&&+\epsilon\,\nabla_qC_{\alpha\beta}(\tilde q_k)v_k\lp \mu^{\beta}(\tilde q_k)M^{-1}\nabla_qV(\tilde q_k)-v_k\frac{\der\mu^{\beta}(\tilde q_k)}{\der q}\,v_k\rp,
\end{eqnarray*}
while the continuous expression reads
\[
\lambda(t_k+\epsilon)=\lambda(q(t_k+\epsilon),v(t_k+\epsilon))=\lambda(q(t_k),v(t_k))+\epsilon\,\frac{\der\lambda}{\der q^i}\,\dot q^i(t_k)+\epsilon\,\frac{\der\lambda}{\der v^i}\,\dot v^i(t_k)+O(\epsilon^2),
\]
where
\begin{equation}\label{LamExp}
\begin{split}
&\frac{\der\lambda_{\alpha}}{\der q}\dot q=\nabla_qC_{\alpha\beta}(q)v\lp\mu^{\beta}(q)M^{-1}\nabla_qV(q)-v\frac{\der\mu^{\beta}(q)}{\der q}\,v\rp\\
&+C_{\alpha\beta}(q)\lp\nabla_q\mu^{\beta}(q)vM^{-1}\nabla_qV(q)+\mu^{\beta}(q)M^{-1}\Delta V(q)v-v\Delta\mu^{\beta}(q)v^2\rp,\\\\
&\frac{\der\lambda_{\alpha}}{\der v}\dot v=-2C_{\alpha\beta}(q)\lp\nabla_q\mu^{\beta}(q)v\rp\lp-M^{-1}\nabla_qV(q)+\lambda_{\gamma}(q,v)M^{-1}\mu^{\gamma}(q)\rp,
\end{split}
\end{equation}
$\lambda_{\gamma}(q,v)$ is again determined by \eqref{lambdaQV}. Finally
\[
\begin{split}
|\lambda_{\alpha}(t_k+\epsilon)-&\lp\lambda_{k+1}\rp_{\alpha}|=-\frac{1}{2}\epsilon\,C_{\alpha\beta}(q)\,v\Delta\mu^{\beta}(q)v^2\\&+2\epsilon\,C_{\alpha\beta}(q)\lp\nabla_q\mu^{\beta}(q)v\rp\lp M^{-1}\nabla_qV(q)-\lambda_{\gamma}(q,v)M^{-1}\mu^{\gamma}(q)\rp+O(\epsilon^2).
\end{split}
\]
According to this, the discrete Lagrange multipliers are consistent with the continuous evolution, i.e. $s=0$, and the claim holds.
\end{proof}
Employing the Ehresmann connection, \eqref{DAELd1c} becomes
\[
v_{k+1}^{\alpha}+A^{\alpha}_a(\tilde q_{k+1})\,v^a_{k+1}=0,
\]
for an initial condition satisfying $v_{k}^{\alpha}+A^{\alpha}_a(\tilde q_{k})\,v^a_{k}=0$. Moreover, the equations \eqref{DAELd1a} and \eqref{DAELd1b} can be decomposed into 
\begin{eqnarray*}
\tilde q_{k+1}^a&=&\tilde q_k^q+\epsilon\,v_k^a,\\
\tilde q_{k+1}^{\alpha}&=&\tilde q_k^{\alpha}-\epsilon\,A_a^{\alpha}(\tilde q_k)\,v_k^{a},\\
v_{k+1}^a&=&v_k^a-\epsilon\,M^{aj}\nabla_{q^j}V(\tilde q_k)-\epsilon\,(\lambda_{k+1})_{\alpha}\lp M^{ab}A^{\alpha}_b(\tilde q_k)+M^{a\alpha}\rp+O(\epsilon^2),
\end{eqnarray*}
where
\begin{multline*}
(\lambda_{k+1})_{\alpha}=-C_{\alpha\beta}(\tilde q_k)\left( v^b_k\frac{\der A^{\beta}_b(\tilde q_k)}{\der q^a}v^a_k-v^b_k\frac{\der A^{\beta}_b(\tilde q_k)}{\der q^{\gamma}}A^{\gamma}_a(\tilde q_k)\,v^a_k \,\,\,\,\,\right.\\
\,\,\left.-A^{\beta}_a(\tilde q_k)M^{aj}\nabla_{q^j}V(\tilde q_k)-M^{\beta j}\nabla_{q^j}V(\tilde q_k)\right).
\end{multline*}

\subsection{$(2,0)$ implicit integrator}

A second-order method can be constructed using the following finite difference map
\begin{equation}\label{ro2}
\rho(q_k,q_{k+1})=\lp \frac{q_k+q_{k+1}}{2},\frac{q_{k+1}-q_k}{\epsilon}\rp,
\end{equation}
corresponding to the case $\beta=1/2$ in \eqref{robeta}. The corresponding discrete Lagrangian is given by
\[
L_d(q_k,q_{k+1})=\frac{1}{2\epsilon}(q_{k+1}-q_k)^T\,M\,(q_{k+1}-q_k)-{\epsilon}V(\frac{q_k+q_{k+1}}{2}).
\]
It has been proved above that this discrete Lagrangian implies a second-order consistent aproximation of the action integral; therefore we expect a second-order consistent integrator for the nonholonomic flow. The nonholonomic integrator \eqref{DLA} (the same rescaling as in the above example is performed, i.e. $\phi^{\alpha}_d= -\epsilon\,\mu^{\alpha}\circ\rho$) is
\begin{eqnarray*}
&&\frac{q_{k+1}-2q_k+q_{k-1}}{\epsilon^2}+\frac{1}{2}M^{-1}\,\nabla_qV\lp\frac{q_k+q_{k-1}}{2}\rp\\
&&+\frac{1}{2}M^{-1}\,\nabla_qV\lp\frac{q_k+q_{k+1}}{2}\rp=(\lambda_{k+1})_{\alpha}M^{-1}\,\mu^{\alpha}\lp q_k\rp,\\\\
&&\mu^{\alpha}\lp\frac{q_k+q_{k+1}}{2}\rp\,\frac{q_{k+1}-q_k}{\epsilon}=0.
\end{eqnarray*}
Obviously, this integrator does not preserve the nonholonomic constraint $D$ at the original nodes, neither if we choose $v_k:=(q_{k+1}-q_k)/\epsilon$ nor if we choose $v_{k+1}:=(q_{k+1}-q_k)/\epsilon$. To get the velocity formulation $\tilde F_{L_d}$ we define $\tilde q_{k+1}:=(q_k+q_{k+1})/2$ and $v_{\tilde q_{k+1}}:=v_{k+1}:=(q_{k+1}-q_{k})/\epsilon$ according to the finite difference map $\rho$.  The numerical scheme $(\tilde q_k,v_k)\Flder (\tilde q_{k+1},v_{k+1})$ then reads
\begin{subequations}\label{DAELd2}
\begin{align}
&\tilde q_{k+1/2}=\tilde q_k +\frac{1}{2}\epsilon\,v_{k}\label{DAELd2a}\\
&\tilde q_{k+1}=\tilde q_{k+1/2}+ \frac{1}{2}\epsilon\,v_{k+1},\label{DAELd2b}\\
&v_{k+1}=v_k-\frac{1}{2}\epsilon\,M^{-1}\lp\nabla_qV(\tilde q_{k+1}) +\nabla_qV(\tilde q_{k})\rp+(\lambda_{k+1})_{\alpha}M^{-1}\mu^{\alpha}(\tilde q_{k+1/2}),\label{DAELd2c}\\
&\mu^{\alpha}(\tilde q_{k+1})\,v_{k+1}=0,\label{DAELd2d}
\end{align}
\end{subequations}
where the initial conditions should satisfy the constraint $\mu^{\alpha}(\tilde q_k)\,v_k=0$. At the new nodes $\lc\tilde q_k\rc$ this integrator preserves the nonholonomic constraint as it is ensured by \eqref{DAELd2d}. The method is fully implicit, but it becomes explicit either if $V=0$ or $\nabla_qV=0$. In addition, in the absence of constraints ($\lambda_{k+1}=0$) it reduces to the standard trapezoidal rule. We use the same techniques as in proposition \ref{firstorder} to show that the numerical scheme \eqref{DAELd2} is a second-order integrator of \eqref{ODExMec}.
We set $V=0$ and insert \eqref{DAELd2b} and \eqref{DAELd2c} into \eqref{DAELd2d}; then, by Taylor expansion up to order $\epsilon^2$, we obtain
\[
\lp\mu^{\alpha}(\tilde q_{k+1/2})+\frac{\epsilon}{2}v_{k+1}\nabla_q\mu^{\alpha}(\tilde q_{k+1/2})+O(\epsilon^2)\rp\lp v_k+\epsilon\,(\lambda_{k+1})_{\beta}\,M^{-1}\,\mu^{\beta}(\tilde q_{k+1/2})\rp=0.
\]
Expanding this expression we get  
\begin{eqnarray}
&&(\lambda_{k+1})_{\alpha}=-\lp C_{\alpha\beta}(\tilde q_{k+1/2})+\epsilon\,R_{\alpha\beta}(\tilde q_{k+1/2})\rp\lp v_k\nabla_q\mu^{\beta}(\tilde q_k) v_k+\frac{\epsilon}{2}v_k^2\Delta\mu^{\beta}(\tilde q_k)v_k\right.\nonumber\\
&&\left. \frac{\epsilon}{2}(\lambda_{k+1})_{\gamma}M^{-1}\mu^{\gamma}(\tilde q_{k+1/2})\nabla_q\mu^{\beta}(\tilde q_{k+1/2})v_k +O(\epsilon^2)\rp),\label{lamdaGR}
\end{eqnarray}
where $\nabla_q\mu^{\alpha}=\frac{\der\mu^{\alpha}_i}{\der q^j}$, $\Delta_q\mu^{\alpha}=\frac{\der^2\mu^{\alpha}_i}{\der q^j\der q^k}$ and $v_k^2\,\Delta_q\mu^{\alpha}(\tilde q_k)\,v_k=\frac{\der^2\mu^{\alpha}_i}{\der q^j\der q^l}\,v_k^iv_k^jv_k^l.$ Moreover, we define
\[
R_{\alpha\beta}(\tilde q_{k+1/2}):=-\frac{1}{2}C_{\alpha\gamma}(\tilde q_{k+1/2})\,\lp v_{k+1}\nabla_q\mu^{\gamma}(\tilde q_{k+1/2})M^{-1}\mu^{\rho}(\tilde q_{k+1/2})\rp\,C_{\rho\beta}(\tilde q_{k+1/2}). 
\]
In addition, here we have taken into account that $\mu^{\alpha}(\tilde q_k)\,v_k=0$.
\begin{proposition}\label{secondorder}
The velocity nonholonomic variational integrator given by the scheme \eqref{DAELd2} and \eqref{lamdaGR} (determined by the finite difference map \eqref{ro2}) is a $(2,0)$ consistent discretization (in the sense of definition \ref{pdisc}) of the nonholonomic flow given by the $ODE$ in \eqref{ODExMec}.
\end{proposition}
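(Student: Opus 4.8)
The plan is to follow the same Taylor-expansion strategy employed in the proof of proposition \ref{firstorder}, now carrying the expansions one order further. As there, I would set $V=0$ for simplicity (the potential terms enter at the same orders and are handled identically), and aim to show separately that $|q(t_k+\epsilon)-\tilde q_{k+1}|\sim O(\epsilon^3)$ and $|v(t_k+\epsilon)-v_{k+1}|\sim O(\epsilon^3)$, where $(q(t),v(t))$ solve \eqref{ODExMec} with $q(t_k)\simeq\tilde q_k$ and $v(t_k)\simeq v_k$, so that $r=l=2$ and hence $p=2$.

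For the position variable the argument is immediate. Eliminating the intermediate node $\tilde q_{k+1/2}$ from \eqref{DAELd2a}--\eqref{DAELd2b} gives the trapezoidal update $\tilde q_{k+1}=\tilde q_k+\tfrac{\epsilon}{2}(v_k+v_{k+1})$. Since $v_k\simeq v(t_k)$ and $v_{k+1}\simeq v(t_k+\epsilon)$, this is precisely the trapezoidal quadrature applied to $\int_{t_k}^{t_k+\epsilon}\dot q\,dt=\int_{t_k}^{t_k+\epsilon}v\,dt$, whose error is $O(\epsilon^3)$. Thus $r=2$.

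The velocity estimate is the heart of the matter and the step I expect to be the main obstacle. I would insert the multiplier expansion \eqref{lamdaGR} into the update \eqref{DAELd2c} and expand every quantity about $\tilde q_k$ up to and including order $\epsilon^2$, using the constraint $\mu^{\alpha}(\tilde q_k)\,v_k=0$ at each stage. The decisive structural feature is that the nonholonomic terms are evaluated at the midpoint $\tilde q_{k+1/2}=\tilde q_k+\tfrac{\epsilon}{2}v_k$, so the built-in $\epsilon/2$ shift supplies the symmetry responsible for second-order accuracy. The resulting expansion must then be matched term-by-term with $v(t_k+\epsilon)=v_k+\epsilon\,\dot v_k+\tfrac{\epsilon^2}{2}\ddot v_k+O(\epsilon^3)$, where $\dot v_k$ is read off from \eqref{ODExMecb}--\eqref{lambdaQV} and $\ddot v_k$ is obtained by differentiating $\lambda_{\gamma}(q,v)\,M^{-1}\mu^{\gamma}(q)$ along the flow \eqref{ODExMec}. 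The genuinely delicate bookkeeping lies here: $\ddot v_k$ produces derivatives of $C_{\alpha\beta}$, the second derivatives $\Delta\mu^{\beta}$, and the cubic-in-$v$ contractions, and these must be shown to coincide exactly with the $O(\epsilon^2)$ terms of the discrete expansion. In particular one must carefully account for the implicit, self-referential occurrence of $(\lambda_{k+1})_{\gamma}$ on the right-hand side of \eqref{lamdaGR} and for the correction matrix $R_{\alpha\beta}$, both of which feed back into $v_{k+1}$ at order $\epsilon^2$. Once this matching is verified, $|v(t_k+\epsilon)-v_{k+1}|\sim O(\epsilon^3)$ follows and $l=2$.

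Finally, for the Lagrange multipliers I would argue as in remark \ref{rmkpdisc}. To leading order the discrete multiplier \eqref{lamdaGR} is the smooth function $\lambda_{\alpha}$ of \eqref{lambdaQV} evaluated at the midpoint $\tilde q_{k+1/2}$, whereas the continuous value $\lambda(t_k+\epsilon)=\lambda_{\alpha}(q(t_k+\epsilon),v(t_k+\epsilon))$ is taken at the endpoint. Since these arguments differ by $O(\epsilon)$ and $\lambda_{\alpha}$ is Lipschitz, one obtains $|\lambda_{\alpha}(t_k+\epsilon)-(\lambda_{k+1})_{\alpha}|\sim O(\epsilon)$, so that $s=0$. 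Together with $p=2$ this establishes the claimed $(2,0)$ consistency in the sense of definition \ref{pdisc}.
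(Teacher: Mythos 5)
Your strategy coincides with the paper's: set $V=0$, Taylor-expand the discrete scheme about $\tilde q_k$, and match against $q(t_k+\epsilon)$, $v(t_k+\epsilon)$ and $\lambda(t_k+\epsilon)$ order by order. Your position and multiplier estimates are essentially the paper's: for the positions the paper substitutes the leading-order multiplier $(\lambda_{k+1})_{\alpha}^0=-C_{\alpha\beta}(\tilde q_k)\lp v_k\nabla_q\mu^{\beta}(\tilde q_k)v_k\rp$ into the trapezoidal form $\tilde q_{k+1}=\tilde q_k+\frac{\epsilon}{2}(v_k+v_{k+1})$, which amounts to your quadrature argument; for the multipliers it likewise concludes $s=0$ by comparing the midpoint-evaluated leading term with the continuous expansion. (One ordering point: your quadrature argument tacitly uses $v_{k+1}-v(t_k+\epsilon)=O(\epsilon^2)$, so you should record that first-order velocity accuracy, immediate from the leading term of \eqref{lamdaGR}, before invoking it.)

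The genuine gap is that the core of the proposition --- the verification that the $O(\epsilon^2)$ terms of the discrete velocity update coincide with $\frac{\epsilon^2}{2}\ddot v_k$ --- is announced but never carried out. You correctly list the players (derivatives of $C_{\alpha\beta}$, second derivatives $\Delta_q\mu^{\beta}$, cubic contractions in $v$, the self-referential occurrence of $(\lambda_{k+1})_{\gamma}$ in \eqref{lamdaGR}, and the correction matrix $R_{\alpha\beta}$), but ``once this matching is verified'' is precisely the content of the claim: a priori the scheme could be only first order, and this identical plan would then fail exactly at this step, with no indication in your write-up of how one would detect it. The paper devotes the bulk of its proof to executing the computation: it derives $\ddot v$ explicitly from \eqref{ODExMecb} and \eqref{lambdaQV} (displayed as \eqref{ddotv}), expands $v_{k+1}=v_k+\epsilon\,(\lambda_{k+1})_{\alpha}M^{-1}\mu^{\alpha}(\tilde q_{k+1/2})$ using $\tilde q_{k+1/2}=\tilde q_k+\frac{\epsilon}{2}v_k$ together with the explicit form of $R_{\alpha\beta}$, and checks coincidence term by term --- in particular that the $R_{\alpha\beta}$ correction and the implicit $\lambda_{k+1}$ feedback reproduce the two $\dot v$-dependent terms $C_{\alpha\beta}\lp\dot v\,\nabla_q\mu^{\beta}\,v+v\,\nabla_q\mu^{\beta}\,\dot v\rp M^{-1}\mu^{\alpha}$ in $\ddot v$, and that the midpoint evaluation of $\mu^{\alpha}$ produces the term $C_{\alpha\beta}\lp v\,\nabla_q\mu^{\beta}\,v\rp M^{-1}(\nabla_q\mu^{\alpha}\,v)$. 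Without this bookkeeping your proposal is a proof outline rather than a proof; to complete it you must display both expansions and exhibit the cancellation explicitly.
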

\begin{proof}
First, we prove that $|q(t_k+\epsilon)-\tilde q_{k+1}|\sim O(\epsilon^3)$. Taking into account that $q(t_k)\sim \tilde q_k$, we have $q(t_k+\epsilon)=\tilde q_k+\epsilon v_k+(\epsilon^2/2) \dot v_k+O(\epsilon^3)$. From \eqref{ODExMecb} we get that $\dot v_k=-C_{\alpha\beta}(\tilde q_k)\lp v_k\,\nabla_q\mu^{\beta}(\tilde q_k)\,v_k\rp M^{-1}\mu^{\alpha}(\tilde q_k)$ yielding
\[
q(t_k+\epsilon)=\tilde q_k+\epsilon v_k-\frac{\epsilon^2}{2}C_{\alpha\beta}(\tilde q_k)\lp v_k\,\nabla_q\mu^{\beta}(\tilde q_k)\,v_k\rp M^{-1}\mu^{\alpha}(\tilde q_k)+O(\epsilon^3).
\]
On the other hand, inserting \eqref{DAELd2a} into \eqref{DAELd2b} we get $\tilde q_{k+1}=\tilde q_k+\frac{\epsilon}{2}v_k+\frac{\epsilon}{2}v_{k+1}$, which, taking into account \eqref{DAELd2c} and the order one terms in \eqref{lamdaGR}, leads to
\[
\tilde q_{k+1}=\tilde q_k+\epsilon v_k-\frac{\epsilon^2}{2}C_{\alpha\beta}(\tilde q_k)\lp v_k\,\nabla_q\mu^{\beta}(\tilde q_k)\,v_k\rp M^{-1}\mu^{\alpha}(\tilde q_k)+O(\epsilon^3),
\]
proving the claim. (Note that the order one terms in \eqref{lamdaGR} are represented by
\[
(\lambda_{k+1})_{\alpha}^0=-C_{\alpha\beta}(\tilde q_k)\lp v_k\nabla_q\mu^{\beta}(\tilde q_k) v_k\rp
\]
which follows by Taylor expansion of all the functions depending on $\tilde q_{k+1/2}$
).

Next, we prove that $|v(t_k+\epsilon)-v_{k+1}|\sim O(\epsilon^3)$. For that purpose, we use $v(t_k+\epsilon)=v_k+\epsilon\dot v_k+(\epsilon^2/2)\ddot v_k+O(\epsilon^3)$ where, from \eqref{ODExMecb} and \eqref{lambdaQV}, we have that 
\begin{eqnarray}
\ddot v=&-&(\nabla_qC_{\alpha\beta}(q)\,v)\lp v\,\nabla_q\mu^{\beta}\,v\rp\,M^{-1}\,\mu^{\alpha}(q)\nonumber\\
&-&C_{\alpha\beta}(q)\lp\dot v\,\nabla_q\mu^{\beta}(q)\,v+v\,\nabla_q\mu^{\beta}(q)\,\dot v+\Delta_q\mu^{\beta}(q)\,v^3\rp\,M^{-1}\mu^{\alpha}(q)\label{ddotv}\\
&-&C_{\alpha\beta}(q)\lp v\,\nabla_q\mu^{\beta}(q)\,v\rp\,M^{-1}(\nabla_q\mu^{\alpha}(q)\,v)\nonumber,
\end{eqnarray}
$\Delta_q\mu^{\beta}(q)\,v^3=\frac{\der^2\mu^{\beta}_i}{\der q^j\der q^l}\,v^i\,v^j\,v^l$. On the other hand, inserting \eqref{lamdaGR} into \eqref{DAELd2c} we get
\begin{eqnarray}
&&v_{k+1}=v_k-\epsilon\,\lp C_{\alpha\beta}(\tilde q_{k+1/2})+\epsilon\,R_{\alpha\beta}(\tilde q_{k+1/2})\rp\lp v_k\nabla_q\mu^{\beta}(\tilde q_k) v_k+\frac{\epsilon}{2}v_k^2\Delta\mu^{\beta}(\tilde q_k)v_k\right.\nonumber\\
&&\left. \frac{\epsilon}{2}(\lambda_{k+1})_{\gamma}M^{-1}\mu^{\gamma}(\tilde q_{k+1/2})\nabla_q\mu^{\beta}(\tilde q_{k+1/2})v_k+O(\epsilon^2)\rp\,M^{-1}\,\mu^{\alpha}(\tilde q_{k+1/2}).\nonumber
\end{eqnarray}
Now, taking into account that $v_{k+1}=v_k+\epsilon\,(\lambda_{k+1})_{\alpha}M^{-1}\mu^{\alpha}(\tilde q_{k+1/2})$, $\tilde q_{k+1/2}=\tilde q_k+\frac{1}{2}\epsilon\,v_k$ and using the expression for $R_{\alpha\beta}(\tilde q_{k+1/2})$ as stated above, we can perform a Taylor expansion of the previous expression for $v_{k+1}$ to obtain
\begin{eqnarray*}
v_{k+1}&=&v_k-\epsilon\,C_{\alpha\beta}(\tilde q_{k})\lp v_k\,\nabla_q\mu^{\beta}(\tilde q_k)\,v_k\rp\,M^{-1}\mu^{\alpha}(\tilde q_k)\\
&-&\frac{\epsilon^2}{2}(\nabla_qC_{\alpha\beta}(\tilde q_k)\,v_k)\lp v_k\,\nabla_q\mu^{\beta}(\tilde q_k)\,v_k\rp\,M^{-1}\,\mu^{\alpha}(\tilde q_k)\\
&-&\frac{\epsilon^2}{2}C_{\alpha\beta}(\tilde q_k)\lp v_k\,\nabla_q\mu^{\beta}(\tilde q_k)(\lambda_{k+1})_{\gamma}M^{-1}\mu^{\gamma}(\tilde q_k)\rp\,M^{-1}\mu^{\alpha}(\tilde q_k)\\
&-&\frac{\epsilon^2}{2}C_{\alpha\beta}(\tilde q_k)\lp (\lambda_{k+1})_{\gamma}M^{-1}\mu^{\gamma}(\tilde q_k)\,\nabla_q\mu^{\beta}(\tilde q_k)\,v_k\rp\,M^{-1}\mu^{\alpha}(\tilde q_k)\\
&-&\frac{\epsilon^2}{2}C_{\alpha\beta}(\tilde q_k)\lp\, v_k^2\Delta_q\mu^{\beta}(\tilde q_k)\,v_k\rp\,M^{-1}\mu^{\alpha}(\tilde q_k)\\
&-&\frac{\epsilon^2}{2}C_{\alpha\beta}(\tilde q_k)\lp v_k\,\nabla_q\mu^{\beta}(\tilde q_k)\,v_k\rp\,M^{-1}(\nabla_q\mu^{\alpha}(\tilde q_k)\,v_k)\\
&+&O(\epsilon^3).
\end{eqnarray*}
As mentioned before,  $(\lambda_{k+1})_{\alpha}=C_{\alpha\beta}(q_k)\lp v_k\nabla_q\mu^{\beta}(q_k)\,v_k\rp+O(\epsilon)$ (which comes from \eqref{lamdaGR}), thus $\dot v_k=-C_{\alpha\beta}(q_k)\lp v_k\nabla_q\mu^{\beta}(q_k)\,v_k\rp\,M^{-1}\mu^{\alpha}(q_k)+O(\epsilon)$. Therefore, comparing the last expression for $v_{k+1}$ and the equation in \eqref{ddotv} we find out that $|v(t_k+\epsilon)-v_{k+1}|\sim O(\epsilon^3)$, establishing the $p=2$ order of consistency as claimed.

Now, regarding the multipliers we follow the same analysis as in proposition \ref{firstorder}. From equations \eqref{DAELd2} we arrive at
\[
\begin{split}
(\lambda_{k+1})_{\alpha}=&-C_{\alpha\beta}(\tilde q_{k+1/2})\left[\frac{1}{2}(v_k+v_{k+1})\frac{\der\mu^{\beta}}{\der q}v_k\right.\\&\left.-\frac{1}{2}\mu^{\beta}(\tilde q_{k+1})M^{-1}\lp\nabla_qV(\tilde q_{k+1})+\nabla_qV(\tilde q_{k})\rp\right]+O(\epsilon).
\end{split}
\]
Taking the Taylor expansion of this expression and comparing to \eqref{LamExp}, it follows directly that $|\lambda_{\alpha}(t_k+\epsilon)-(\lambda_{k+1})_{\alpha}|\simeq O(\epsilon)$. Accordingly, $s=0$ and the claim holds.	
\end{proof}
As in the previous example, using the Ehresmann connection, the equations \eqref{DAELd2a}, \eqref{DAELd2b} and \eqref{DAELd2c} can be decomposed into
\begin{eqnarray*}
\tilde q_{k+1/2}^a&=&\tilde q_k^a+\frac{1}{2}\epsilon v_k^a,\\
\tilde q_{k+1/2}^{\alpha}&=&\tilde q_k^{\alpha}-\frac{1}{2}\epsilon\,A^{\alpha}_a(\tilde q_k) v_k^a,\\
\tilde q_{k+1}^a&=&\tilde q_{k+1/2}^a+\frac{1}{2}\epsilon v_{k+1}^a,\\
\tilde q_{k+1}^{\alpha}&=&\tilde q_{k+1/2}^{\alpha}-\frac{1}{2}\epsilon\,A^{\alpha}_a(\tilde q_{k+1/2}) v_{k+1}^a,\\
v_{k+1}^a&=&v_k^a+\epsilon (\tilde\lambda_{k+1})_{\alpha}\lp M^{ab}A^{\alpha}_b(\tilde q_{k+1/2})+M^{a\alpha}\rp,\\
\end{eqnarray*}
where now, due to  \eqref{lamdaGR},   
\begin{eqnarray*}
(\tilde\lambda_{k+1})_{\alpha}=&-&C_{\alpha\beta}(\tilde q_{k+1/2})[ \frac{1}{2}v^b_k\frac{\der A^{\beta}_b(\tilde q_k)}{\der q^a}v^a_k-\frac{1}{2}v^b_k\frac{\der A^{\beta}_b(\tilde q_k)}{\der q^{\gamma}}A^{\gamma}_a(\tilde q_k)\,v^a_k\\
&+&\frac{1}{2}v^b_{k+1}\frac{\der A^{\beta}_b(\tilde q_{k+1/2})}{\der q^a}v^a_{k+1}-\frac{1}{2}v^b_{k+1}\frac{\der A^{\beta}_b(\tilde q_{k+1/2})}{\der q^{\gamma}}A^{\gamma}_a(\tilde q_{k+1/2})\,v^a_{k+1}\\
&+&\frac{\epsilon}{4}v^a_k\,v^b_k\,v^c_k\frac{\der^2A^{\beta}_a(\tilde q_k)}{\der q^b\der q^c}-\frac{\epsilon}{2}v^a_k\,v^b_k\,v^c_k \frac{\der^2A^{\beta}_a(\tilde q_k)}{\der q^b\der q^{\gamma}}A^{\gamma}_c(\tilde q_k)\\
&+&\frac{\epsilon}{4}v^a_k\,v^b_k\,v^c_k\frac{\der^2A^{\beta}_a(\tilde q_k)}{\der q^{\gamma}\der q^{\rho}}A^{\gamma}_b(\tilde q_k)A^{\rho}_c(\tilde q_k)].
\end{eqnarray*}

\subsection{An elucidating toy model: the nonholonomic particle}

Consider a particle of unit mass evolving in $Q=\R^3$ with Lagrangian
\[
L(x,y,z,\dot x,\dot y,\dot z)=\frac{1}{2}(\dot x^2+\dot y^2+\dot z^2)
\]
and subject to the constraint
\[
\dot z-y\,\dot x=0.
\]
Note that the nonholonomic particle is a simple mechanical system and therefore we can apply all the results developed in the previous subsection. We have a nonholonomic system defined by the annhilation of the one-form $\mu(x,y,z)=(-y,0,1)$. The nonholonomic equations are given by
\begin{equation}\label{NhPequations}
\begin{array}{lcl}
\dot v_x=-y\lambda, & &v_x=\dot x,\\\\
\dot v_y=0, & &v_y=\dot y,\\\\
\dot v_z=\lambda, & &v_z=\dot z,\\\\
       &v_z-y\,v_x=0, & 
\end{array}
\end{equation}
which, after eliminating the Lagrange multiplier using \eqref{lambdax}
\[
\lambda=\frac{v_xv_y}{1+y^2},
\]
lead to 
\begin{equation}\label{NhPequationsSL}
\begin{array}{lcl}
\dot v_x=-\frac{y}{1+y^2}\,v_x\,v_y, & &v_x=\dot x,\\\\
\dot v_y=0, & &v_y=\dot y,\\\\
\dot v_z=\frac{1}{1+y^2}\,v_x\,v_y, & &v_z=\dot z,\\\\
\end{array}
\end{equation}
such that $v_z-y\,v_x=0$. Let $D^{np}\subset T\R^{3}\simeq \R^3\times\R^3$ denote the nonholonomic distribution corresponding to this system. Then the equations in \eqref{NhPequationsSL} define a time-continuous flow $F_t:D^{np}\Flder D^{np}$, i.e. $F_t((q(0),v(0)))=(q(t),v(t))$, where $q(t)=(x(t),y(t),z(t))^T$ and $v(t)=(v_x(t),v_y(t),v_z(t))^T$. We shall consider two examples of the nonholonomic variational integrator \eqref{DLA} with $\phi^{\alpha}_d=-\epsilon\,\mu^{\alpha}\circ\rho$. 
\medskip

First, let us set
\begin{equation}\label{Facil}
\rho(q_k,q_{k+1})=\lp q_{k+1},\frac{q_{k+1}-q_k}{\epsilon}\rp,
\end{equation}
leading to the integrator
\begin{subequations}\label{DLA1NhP}
\begin{align}
\frac{x_{k+1}-2x_k+x_{k-1}}{\epsilon^2}&=-\lambda_{k+1}\,y_k,\label{DLA1NhPa}\\
\frac{y_{k+1}-2y_k+y_{k-1}}{\epsilon^2}&=0,\label{DLA1NhPb}\\
\frac{z_{k+1}-2z_k+z_{k-1}}{\epsilon^2}&=\lambda_{k+1}\,\label{DLA1NhPc}\\
0&=\frac{z_{k+1}-z_k}{\epsilon}-y_{k+1}\,\lp\frac{x_{k+1}-x_k}{\epsilon}\rp.\label{DLA1NhPd}
\end{align}
\end{subequations}
Let us define the finite difference map $\rho(q_k,q_{k+1})\in T_{q_{k+1}}Q$ by setting $v_{k+1}:=v_{q_{k+1}}:=(q_{k+1}-q_k)/\epsilon$ ($q_k=(x_k, y_k, z_k)^T$ and $v_k=(v_{x}^k,v_{y}^k,v_{z}^k)^T$), which yields
\begin{eqnarray*}
&&\frac{v_{x}^{k+1}-v_{x}^k}{\epsilon}=-\lambda_{k+1}\,y_k,\quad\quad\quad\quad\,\,\,\,\,\,\,x_{k+1}=x_k+\epsilon\,v_{x}^{k+1},\nonumber\\\nonumber\\
&&\frac{v_{y}^{k+1}-v_{y}^k}{\epsilon}=0,\,\,\,\,\,\,\,\,\,\,\,\,\,\,\,\,\,\,\,\,\,\,\,\,\,\,\,\,\,\,\,\,\,\,\,\,\,\,\,\,\,\,\,\,\quad\,\,\, y_{k+1}= y_k+\epsilon\,v_{y}^{k+1},\\\nonumber\\
&&\frac{v_{z}^{k+1}-v_{z}^k}{\epsilon}=\lambda_{k+1},\,\,\,\,\,\,\,\,\,\,\,\,\,\,\,\,\,\,\,\,\,\,\,\,\,\,\,\,\,\,\,\,\,\,\,\,\,\,\,\,\,\,\,\,z_{k+1}= z_k+\epsilon\,v_{z}^{k+1},\nonumber\\\nonumber\\
&&\,\,\,\,\,\,\,\,\,\,\,\,\,\,\,\,\,\,\,\,\,\,\,\,\,\,\,\,\,\,\,\,\,\,\,0=v_{z}^{k+1}-y_{k+1}\,v_{x}^{k+1}.\nonumber
\end{eqnarray*}
Notice that, according to \eqref{DLA1NhPd} and the last equation just above, in this case $D_d=D$ and therefore this integrator respects the original constraint {\it without} the need of redefining the nodes. Employing the equation \eqref{lamdaGR}, we obtain the Lagrange multiplier 
\[
\lambda_{k+1}=\frac{v_y^kv_x^k}{1+y_k\,(y_k+\epsilon\,v_y^k)}
\]
and furthermore the integrator
\begin{eqnarray*}
&&\frac{v_{x}^{k+1}-v_{x}^k}{\epsilon}=-y_k\,\frac{v_y^kv_x^k}{1+y_k\,(y_k+\epsilon\,v_y^k)},\quad\quad\quad\quad\,\,\,\,\,\,\,x_{k+1}=x_k+\epsilon\,v_{x}^{k+1},\nonumber\\\nonumber\\
&&\frac{v_{y}^{k+1}-v_{y}^k}{\epsilon}=0,\,\,\,\,\,\,\,\,\,\,\,\,\,\,\,\,\,\,\,\,\,\,\,\,\,\,\,\,\,\,\,\,\,\,\,\,\,\,\,\,\,\,\,\,\quad\quad\quad\quad\quad\quad\quad y_{k+1}= y_k+\epsilon\,v_{y}^{k+1},\\\nonumber\\
&&\frac{v_{z}^{k+1}-v_{z}^k}{\epsilon}=\frac{v_y^kv_x^k}{1+y_k\,(y_k+\epsilon\,v_y^k)},\,\,\,\,\,\,\,\,\,\,\,\,\,\,\,\,\,\,\,\,\,\,\,\,\,\,\,\,\,\,\,\,\,\,\,\,\,\,\,\,\,\,\,\,z_{k+1}= z_k+\epsilon\,v_{z}^{k+1},\nonumber
\end{eqnarray*}
which, according to the results in the previous section, is a $(1,0)$ integrator in the sense of definition \ref{pdisc}. In the following case we focus on a more involved scheme that shows the procedure of redefinition of the nodes.
\medskip

Let us now consider
\begin{equation}\label{NoFacil}
\rho(q_k,q_{k+1})=\lp\frac{q_k+q_{k+1}}{2},\frac{q_{k+1}-q_k}{\epsilon}\rp.
\end{equation}
The nonholonomic variational integrator \eqref{DLA} reads
\begin{subequations}\label{DLA2NhP}
\begin{align}
\frac{x_{k+1}-2x_k+x_{k-1}}{\epsilon^2}&=-\lambda_{k+1}\,y_k,\label{DLA2NhPa}\\
\frac{y_{k+1}-2y_k+y_{k-1}}{\epsilon^2}&=0,\label{DLA2NhPb}\\
\frac{z_{k+1}-2z_k+z_{k-1}}{\epsilon^2}&=\lambda_{k+1}\,\label{DLA2NhPc}\\
0&=\frac{z_{k+1}-z_k}{\epsilon}-\lp\frac{y_{k+1}+y_k}{2}\rp\lp\frac{x_{k+1}-x_k}{\epsilon}\rp.\label{DLA2NhPd}
\end{align}
\end{subequations}
As the discrete Lagrange-d'Alembert principle ensures, these equations generate a discrete nonholonomic flow map $F_{L_d}^{nh}:D_d\Flder D_d$, where in this case $D_d\subset \R^3\times \R^3$.  Let us define the finite difference map $\rho(q_k,q_{k+1})\in T_{\tilde q_{k+1}}Q$, by setting $\tilde q_{k+1}:=(q_{k+1}+q_k)/2$ and $v_{k+1}:=v_{\tilde q_{k+1}}:=(q_{k+1}-q_k)/\epsilon$ ($\tilde q_k=(\tilde x_k,\tilde y_k,\tilde z_k)^T$ and $v_k=(v_{\tilde x}^k,v_{\tilde y}^k,v_{\tilde z}^k)^T$).
Then the integrator given by the scheme \eqref{DLA2NhP} is a particular example for the second case studied in the previous subsection, i.e., it represents a $(2,0)$ consistent discretization (in the sense of definition \ref{pdisc}) of the nonholonomic flow given by the $ODE$ in \eqref{ODExMec}. 
In other words, under these assumptions the equations \eqref{DLA2NhP} can be rewritten as  
\begin{eqnarray}
&&\frac{v_{\tilde x}^{k+1}-v_{\tilde x}^k}{\epsilon}=-\lambda_{k+1}\,\lp \tilde y_k+\frac{\epsilon}{2}v_{\tilde y}^k\rp,\,\,\,\,\,\tilde x_{k+1}=\tilde x_k+\epsilon\,v_{\tilde x}^{k+1},\nonumber\\\nonumber\\
&&\frac{v_{\tilde y}^{k+1}-v_{\tilde y}^k}{\epsilon}=0,\,\,\,\,\,\,\,\,\,\,\,\,\,\,\,\,\,\,\,\,\,\,\,\,\,\,\,\,\,\,\,\,\,\,\,\,\,\,\,\,\,\,\,\,\quad\,\,\, \tilde y_{k+1}=\tilde y_k+\epsilon\,v_{\tilde y}^{k+1},\label{tildenodes0}\\\nonumber\\
&&\frac{v_{\tilde z}^{k+1}-v_{\tilde z}^k}{\epsilon}=\lambda_{k+1},\,\,\,\,\,\,\,\,\,\,\,\,\,\,\,\,\,\,\,\,\,\,\,\,\,\,\,\,\,\,\,\,\,\,\,\,\,\,\,\,\,\,\,\,\tilde z_{k+1}=\tilde z_k+\epsilon\,v_{\tilde z}^{k+1},\nonumber\\\nonumber\\
&&\,\,\,\,\,\,\,\,\,\,\,\,\,\,\,\,\,\,\,\,\,\,\,\,\,\,\,\,\,\,\,\,\,\,\,0=v_{\tilde z}^{k+1}-\tilde y_{k+1}\,v_{\tilde x}^{k+1},\nonumber
\end{eqnarray}
which, by using the formula \eqref{lamdaGR} to determine the Lagrange multiplier 
\[
\lambda_{k+1}=\frac{v_{\tilde x}^k\,v_{\tilde y}^k}{1+(\tilde y_k+\epsilon v_{\tilde y}^k)(\tilde y_k+\frac{\epsilon}{2} v_{\tilde y}^k)},
\]
become
\begin{equation}\label{tildenodes}
\begin{array}{ll}
\frac{v_{\tilde x}^{k+1}-v_{\tilde x}^k}{\epsilon}=-\frac{v_{\tilde x}^k\,v_{\tilde y}^k}{1+(\tilde y_k+\epsilon v_{\tilde y}^k)(\tilde y_k+(\epsilon/2) v_{\tilde y}^k)}\lp \tilde y_k+\frac{\epsilon}{2}v_{\tilde y}^k\rp, & \tilde x_{k+1}=\tilde x_k+\epsilon\,v_{\tilde x}^{k+1},\\\\
\frac{v_{\tilde y}^{k+1}-v_{\tilde y}^k}{\epsilon}=0, &\tilde y_{k+1}=\tilde y_k+\epsilon\,v_{\tilde y}^{k+1},\\\\
\frac{v_{\tilde z}^{k+1}-v_{\tilde z}^k}{\epsilon}=\frac{v_{\tilde x}^k\,v_{\tilde y}^k}{1+(\tilde y_k+\epsilon v_{\tilde y}^k)(\tilde y_k+(\epsilon/2) v_{\tilde y}^k)}\,,& \tilde z_{k+1}=\tilde z_k+\epsilon\,v_{\tilde z}^{k+1},\\
\end{array}
\end{equation}
providing a discrete flow $\tilde F_{L_d}:D_{\tilde q_k}^{np}\Flder D_{\tilde q_{k+1}}^{np}$ such that $\tilde F_{L_d}(\tilde q_k,v_{\tilde q_k})=F_{\epsilon}(q(0),v(0))+O(\epsilon^3)$.

On the other hand, it is easy to see that with respect to the original nodes, this integrator does not preserve the nonholonomic constraint $D^{np}$ and, on top of that, the order of consistency deminishes from second to first order. This assertion can be understood in the following way: if we consider $v_{k+1}:=(q_{k+1}-q_k)/\epsilon$ to be the velocity associated to $q_{k+1}$, then the equations \eqref{DLA2NhP} can be rewritten as
\begin{eqnarray}
&&\frac{v_{x}^{k+1}-v_{x}^k}{\epsilon}=-\lambda_{k+1}\,y_k,\,\,\,\,\,\,\,\,\,\,\,\,\,\,\,\,\,\,\,\,\,\,\,\,\,\,\,\,\,\,\,\,\,\,\,\, x_{k+1}=x_k+\epsilon\,v_{x}^{k+1},\nonumber\\\nonumber\\
&&\frac{v_{y}^{k+1}-v_{y}^k}{\epsilon}=0,\,\,\,\,\,\,\, \,\,\,\,\,\,\,\,\,\,\,\,\,\,\,\,\,\,\,\,\,\,\,\,\,\,\,\,\,\,\,\,\,\,\,\,\,\,\,\,\,\quad\,\,\,y_{k+1}=y_k+\epsilon\,v_{y}^{k+1},\label{original0}\\\nonumber\\
&&\frac{v_{z}^{k+1}-v_{z}^k}{\epsilon}=\lambda_{k+1},\, \,\,\,\,\,\,\,\,\,\,\,\,\,\,\,\,\,\,\,\,\,\,\,\,\,\,\,\,\,\,\,\,\,\,\,\,\,\,\,\,\,\,\,\,\,\,\,z_{k+1}=z_k+\epsilon\,v_{z}^{k+1},\nonumber\\\nonumber\\
&&\,\,\,\,\,\,\,\,\,\,\,\,\,\,\,\,\,\,\,\,\,\,\,\,\,\,0=v_z^{k+1}-y_{k+1}\,v_x^{k+1}+\frac{\epsilon}{2}\,v_y^{k+1}\,v_x^{k+1}.\nonumber
\end{eqnarray}
It is clear that the pairs $(q_{k+1},v_{k+1})$ do not obey the nonholonomic constraint given by  $v_z-y\,v_x=0$ but a deformed one given by 
\begin{equation}\label{deformedCons}
v_z-y\,v_x+\frac{\epsilon}{2}\,v_x\,v_y=0.
\end{equation}
Let $D_{\epsilon}^{np}$ denote the deformed nonholonomic submanifold represented by \eqref{deformedCons} (note that we can not say {\it distribution} since this constraint is no longer linear nor affine). At this point we have two choices for the initial data, namely: either they respect $D^{np}$ or  $D_{\epsilon}^{np}$.
In both cases the order of consistency is deminished from second to first order as compared to the case of the nodes $\tilde q_k$. In the first case, the previous numerical scheme defines a discrete flow map $F_{L_d}:T_{q_k}Q\Flder T_{q_{k+1}}Q$, while in the second case it describes a discrete flow map respecting the deformed constraints, that is $\tilde F_{L_d}^{\epsilon}:\lp D_{\epsilon}^{np}\rp_{q_k}\Flder \lp D_{\epsilon}^{np}\rp_{q_{k+1}}$. Considering the particular expression of the Lagrange multiplier, given by inserting the dynamical variables into the constraints in \eqref{original0}, i.e.
\[
\lambda_{k+1}=\frac{v_y^k\,v_x^k}{1+y_k(y_k+\frac{\epsilon}{2}v_y^k)},
\]
in the second case we get the numerical scheme
\begin{equation}\label{originalnodes}
\begin{array}{lcl}
\frac{v_{x}^{k+1}-v_{x}^k}{\epsilon}=-\frac{y_k}{1+y_k(y_k+(\epsilon/2)v_y^k)}\,v_x^k\,v_y^k,& & x_{k+1}=x_k+\epsilon\,v_{x}^{k+1},\\\\
\frac{v_{y}^{k+1}-v_{y}^k}{\epsilon}=0, & &y_{k+1}=y_k+\epsilon\,v_{y}^{k+1},\\\\
\frac{v_{z}^{k+1}-v_{z}^k}{\epsilon}=\frac{v_y^k\,v_x^k}{1+y_k(y_k+(\epsilon/2)v_y^k)},\, & &z_{k+1}=z_k+\epsilon\,v_{z}^{k+1},\\
\end{array}
\end{equation}
which is first-order consistent with respect to the time-continuous flow $F_t$, i.e. $\tilde F_{L_d}^{\epsilon}(q_k,v_k)=F_{\epsilon}((q(0),v(0)))+O(\epsilon^2)$. Let us define the finite difference map $\tilde\rho$ : $(q_k,q_{k+1})\mapsto(\tilde q_{k+1},v_{\tilde q_{k+1}})$, and the corresponding map $\hat\rho$ : $(q_{k},q_{k+1})\mapsto(q_{k+1},v_{q_{k+1}})$, i.e., 
\begin{subequations}\label{ros}
\begin{align}
\tilde\rho(q_k,q_{k+1})&=\lp\frac{q_k+q_{k+1}}{2},\frac{q_{k+1}-q_k}{\epsilon}\rp,\label{rosa}\\
\hat\rho(q_k,q_{k+1})&=\lp q_{k+1},\frac{q_{k+1}-q_k}{\epsilon}\rp.\label{rosb}
\end{align}
\end{subequations}
To avoid confussion, we would like to point out that with some abuse of notation both $\tilde\rho$ and $\hat\rho$ are finite difference maps associated to the discretization \eqref{NoFacil} corresponding to {\it redefined} and {\it non-redefined} nodes respectively; while, regarding \eqref{Facil}, the finite difference map that we have considered associates the velocity $v_{k+1}$ to $q_{k+1}$ without redefining the nodes.
Taking this into account, the previous discussion can be summarized in the following commutative diagram and subsequent proposition
\[
\xymatrix{
(q_{k},v_{q_k})\ar[r]^{F_{L_d}^{\epsilon}} &(q_{k+1},v_{q_{k+1}})\ar[r]^{F_{L_d}^{\epsilon}} & (q_{k+1},v_{q_{k+1}})\ar[r]^{F_{L_d}^{\epsilon}} &...\\
(q_{k-1},q_k)\ar[r]^{F_{L_d}^{nh}}\ar[u]^{\hat\rho}\ar[d]_{\tilde\rho} &(q_k,q_{k+1})\ar[r]^{F_{L_d}^{nh}}\ar[u]^{\hat\rho}\ar[d]_{\tilde\rho} & (q_{k+1},q_{k+2})\ar[r]^{F_{L_d}^{nh}}\ar[u]^{\hat\rho}\ar[d]_{\tilde\rho} &\ar[u]^{\hat\rho}\ar[d]_{\tilde\rho}...\\
(\tilde q_{k},v_{\tilde q_k})\ar[r]_{\tilde F_{L_d}} &(\tilde q_{k+1},v_{\tilde q_{k+1}})\ar[r]_{\tilde F_{L_d}} & (\tilde q_{k+1},v_{\tilde q_{k+1}})\ar[r]_{\tilde F_{L_d}} &...\\
}
\]
where, we recall, $F_{L_d}^{nh}:D_d\Flder D_d$ is generated  by \eqref{DLA2NhP}, $\tilde F_{L_d}:D_{\tilde q_k}^{np}\Flder D_{\tilde q_{k+1}}^{np}$ by \eqref{tildenodes} and $F_{L_d}^{\epsilon}:\lp D^{np}_{\epsilon}\rp_{q_k}\Flder \lp D^{np}_{\epsilon}\rp_{q_{k+1}}$ by \eqref{originalnodes}.

\begin{proposition}
Consider the nonholonomic particle system and the associated nonholonomic ODE \eqref{NhPequationsSL}. The velocity nonholonomic variational integrator defined by the finite difference map in \eqref{rosa} gives rise to $(2,0)$ consistent discretization, and moreover it is $D^{np}$-preserving. Moreover, the velocity nonholonomic variational integrator defined by the finite difference map \eqref{rosb} gives rise to a $(1,0)$ consistent discretization that does not preserve the original nonholonomic distribution but it does respect the perturbed constraint given by \eqref{deformedCons}.
\end{proposition}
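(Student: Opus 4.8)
The plan is to handle the two finite difference maps separately, exploiting the fact that nearly all of the required computation has already been performed in the analysis of the scheme \eqref{DLA2NhP}, so that the proof amounts to invoking the general propositions and assembling the explicit formulas. For the redefined-node map $\tilde\rho$ in \eqref{rosa}, I would first observe that $\tilde\rho$ is precisely the finite difference map \eqref{ro2} (the case $\beta=1/2$), and that the nonholonomic particle is a simple mechanical system with $M$ equal to the $3\times3$ identity matrix, vanishing potential $V=0$, and constraint one-form $\mu=(-y,0,1)$; hence it falls within the class treated in proposition \ref{secondorder}. The $(2,0)$ consistency with respect to the ODE \eqref{ODExMec} (which here specializes to \eqref{NhPequationsSL}) then follows at once from that proposition. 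For the $D^{np}$-preservation I would appeal to proposition \ref{propoDcont}: since $D_d$ is defined by $\phi^{\alpha}_d=-\epsilon\,\mu^{\alpha}\circ\rho\propto\mu^{\alpha}\circ\rho$, its hypothesis is met, so $(\tilde q_k,v_{\tilde q_k})\in D^{np}$ is propagated to $(\tilde q_{k+1},v_{\tilde q_{k+1}})\in D^{np}$. This is confirmed directly by the last line of \eqref{tildenodes0}, namely $v_{\tilde z}^{k+1}-\tilde y_{k+1}\,v_{\tilde x}^{k+1}=0$, which is exactly the original constraint evaluated at the redefined node.

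For the non-redefined map $\hat\rho$ in \eqref{rosb}, I would proceed in two steps. First, to identify the constraint actually obeyed, I substitute $v_{k+1}=(q_{k+1}-q_k)/\epsilon$ and $y_{k+1}=y_k+\epsilon\,v_y^{k+1}$ into the discrete constraint \eqref{DLA2NhPd}, using $(y_{k+1}+y_k)/2=y_{k+1}-\tfrac{\epsilon}{2}v_y^{k+1}$; this yields the last equation of \eqref{original0}, i.e. \eqref{deformedCons}, and shows immediately that the pairs $(q_{k+1},v_{k+1})$ lie on $D^{np}_{\epsilon}$ but not on $D^{np}$ unless $v_x v_y=0$. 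Second, for the order of consistency I would Taylor-expand the scheme \eqref{originalnodes}: its leading terms reproduce $\dot v_x=-\tfrac{y}{1+y^2}v_xv_y$, $\dot v_y=0$ and $\dot v_z=\tfrac{1}{1+y^2}v_xv_y$ from \eqref{NhPequationsSL} up to $O(\epsilon)$, which establishes the first-order consistency $\tilde F^{\epsilon}_{L_d}(q_k,v_k)=F_{\epsilon}(q(0),v(0))+O(\epsilon^2)$.

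The remaining point is to confirm that the order genuinely drops from two to one, and this is where the bookkeeping must be done carefully. I would expand the denominator $1+y_k(y_k+\tfrac{\epsilon}{2}v_y^k)$ one order further and note that the resulting $O(\epsilon^2)$ coefficient fails to match the $\tfrac{\epsilon^2}{2}\ddot v$ term of the continuous solution; equivalently, the relation $q_{k+1}=\tilde q_{k+1}+\tfrac{\epsilon}{2}v_{k+1}$ between the non-redefined node and the second-order accurate midpoint node of Part 1 introduces an uncancelled $O(\epsilon)$ displacement. The main subtlety, and the step requiring the most care, is precisely this: the very same $O(\epsilon)$ discrepancy between the two node sequences is simultaneously responsible for the deformation of the constraint from $v_z-y\,v_x=0$ to \eqref{deformedCons} and for the loss of one order of consistency, so the two assertions have to be tracked together rather than argued independently.
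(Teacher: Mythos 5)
Your proposal is correct and follows essentially the same route as the paper: the $(2,0)$ claim for \eqref{rosa} is obtained by specializing proposition \ref{secondorder} to $M=I$, $V=0$, $\mu=(-y,0,1)$, with $D^{np}$-preservation read off from the last equation of \eqref{tildenodes0} (and guaranteed in general by proposition \ref{propoDcont}), while the claim for \eqref{rosb} is obtained by rewriting the midpoint constraint via $(y_{k+1}+y_k)/2=y_{k+1}-\tfrac{\epsilon}{2}v_y^{k+1}$ into the last equation of \eqref{original0}, i.e. \eqref{deformedCons}, and Taylor-expanding \eqref{originalnodes}. The only difference is that you explicitly check that second-order consistency genuinely fails by comparing the $O(\epsilon^2)$ coefficients of the scheme and of the exact flow, a point the paper asserts without computation; this is a harmless and in fact welcome strengthening.
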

The previous exposition and this proposition emphasize again the importance of the redefinition of the nodes exposed above. Namely, the integrator determined by $\hat\rho$ \eqref{rosb} is {\it first-order}, $(1,0)$ in the nomenclature introduced in definition \ref{pdisc}. On the other hand, the midpoint quadrature $\tilde\rho$ \eqref{rosa} is only {\it second-order} as expected, this is $(2,0)$, after the redefinition of the nodes, which also implies the preservation of the constraints; otherwise the method is first-order in the variables $(q,v)$ and we do not get any improvement as compared to $\hat\rho$ or the simpler $(1,0)$ scheme \eqref{Facil}, where we do not redefine the nodes at all and, yet, we obtain a distribution preserving integrator.

Finally, we would like to make a remark concerning the performance of a preliminary implementation of the numerical schemes introduced in this section (regarding the nonholonomic particle). They provided the results expected according to the general theory presented in this paper: In all cases, the original constraint turned out to be preserved up to small bounded oscillatory variations. We will provide more details of the numerics associated to the theory developed in this paper in forthcoming papers.  

\begin{remark}\label{deformation}
{\rm
The example above draws our attention to the case of integrators that {\it do not} preserve the nonholonomic constraints (which is the usual case) but a deformation of them. At the light of the example of the nonholonomic particle and the perturbed constraints \eqref{deformedCons}, we shall consider the following deformation:
\begin{equation}\label{DefCons}
\mu^{\alpha}_i(q)\,v^i+\delta\,g^{\alpha}(q,v)=0,
\end{equation}
where we consider $\delta>0$ to be a small parameter and $g^{\alpha}:TQ\Flder\R$ is a generic smooth function, where $Q=\R^n$, not necessarily linear in the velocities (thus, we cannot speak anymore about a distribution but about a {\it submanifold}).  According to theorem \ref{Subma}, we need further assumptions on $g^{\alpha}$ such that \eqref{DefCons} defines a regular submanifold  $D_{\delta}\subset T\R^n$, namely
\[
\mbox{rank}\lp\frac{\der\mu^{\alpha}_i}{\der q^j}v^i+\delta\,\frac{\der g^{\alpha}}{\der q^j}\,,\,\mu^{\alpha}_i+\delta\,\frac{\der g^{\alpha}}{\der v^i}\rp=m.
\]
It is enough to assume that rank $\lp\mu^{\alpha}_i+\delta\,\frac{\der g^{\alpha}}{\der v^i}\rp=m$, an assumption that we are making in the following (note that this is locally true for $\delta$ small enough). Con\-si\-de\-ring equations \eqref{NhDAE3} and theorem \ref{EUDAETh} we shall define the nonholonomic dynamics on $D_{\delta}$ by means of the following set of differential algebraic equations:
\begin{subequations}\label{DefDAE}
\begin{align}
\dot y&=f(y)+\lambda_{\alpha}\lp\begin{array}{c}0\\m^{-1}\nabla_v(\varphi^{\alpha}(y)+\delta\,g^{\alpha}(y))\end{array}\rp,\label{DefDAEa}\\
0&=\varphi^{\alpha}(y)+\delta\,g^{\alpha}(y),\label{DefDAEb}
\end{align}
\end{subequations}
which are a $\delta-$perturbation of the original nonholonomic DAE \eqref{NhDAE3}. Note again that theorem \ref{EUDAETh} allows more freedom on the function $g^{\alpha}$, more concretely, it might be non-autonomous (although we are keeping the autonomous setting for now). Moreover, we can employ analogous arguments than in lemma \ref{lemmaC} to prove that the $m\times m$ matrix
\begin{equation}\label{Ceps}
(C^{\alpha\beta}_{\delta}(q,v))=\lp(\mu^{\alpha}_i+\delta\,\frac{\der g^{\alpha}}{\der v^i})\,m^{ij}\,(\mu^{\beta}_j+\delta\,\frac{\der g^{\beta}}{\der v^j})\rp
\end{equation}
is invertible for small $\delta$. Consequently, equations \eqref{DefDAE} induce the following ODE evolving on $D_{\delta}$
\begin{equation}\label{heps}
\dot x=f(x)+\lambda_{\alpha}^{\delta}(x)\left(\begin{array}{c}0\\m^{-1}\nabla_v(\varphi^{\alpha}(x)+\delta\,g^{\alpha}(x))\end{array}\right),
\end{equation}
where 
\begin{multline}
\lambda_{\alpha}^{\delta}(x)=-\lp C_{\delta}(x)\rp_{\alpha\beta}\left(\bra\nabla_q(\varphi^{\beta}(x)+\delta\,g^{\beta}(x)),f_q(x)\ket\right.\\
\left.+\bra\nabla_v(\varphi^{\beta}(x)+\delta\,g^{\beta}(x)),f_v(x)\ket\right),
\end{multline}
and the constraints $\varphi^{\alpha}(x)+\delta\,g^{\alpha}(x)=0$ must be obeyed. Introducing local coordinates $\xi_{\delta}\in\R^{2n-m}$ and the mapping $\psi_{\delta}:\R^{2n-m}\Flder D_{\delta}\subset\R^{2n}$ such that for $x\in D_{\delta}\subset\R^{2n}$, $x=\psi_{\delta}(\xi_{\delta})$, the previous equation  induces an ODE on $\R^{2n-m}$, i.e.
\begin{equation}\label{ODExiEps}
\dot\xi_{\delta}=\lp\nabla_{\xi}\psi_{\delta}\rp^{-1}(\xi_{\delta})\,h^{\delta}\lp\psi_{\delta}(\xi_{\delta})\rp,
\end{equation}
where the vector field $h^{\delta}(x)$ is given by the right hand side of \eqref{heps}. Also, we can employ the
Ehresmann connection (which is globally well-defined), to choose
suitable bundle coordinates such that the result in proposition \ref{InterpolationD} can be applied to define a curve $c_{\delta}(t)\subset D_{\delta}.$ Furthermore, the analysis presented in corollary \ref{Coro} and remark \ref{LargeRemark} can be applied in the case $D_{\delta}$, leading to analogous conclusions.
}
\end{remark}

\section{Conclusions}

We have proved that any $D-$preserving integrator of the nonholonomic ODE may be understood as a non-autonomous perturbation of this ODE itself. This result is based on the construction of the nonholonomic ODE from the original nonholonomic DAE, a  construction which strongly depends on the structure of the linear constraints (inherent in our nonholonomic setting) and the regularity conditions which we impose. It is also studied what kind of a 
perturbation a non-autonomous perturbation of the nonholonomic DAE may cause for the nonholonomic ODE. The matching of both perturbations and their relation to the continuous Lagrange-d'Alembert principle is left as a topic of further analysis in future works. 

Moreover, from the discretization of the Lagrange-d'Alembert principle we construct $D-$preserving variational integrators. We give precise conditions under which this property is ensured. It turns out that this property requires a redefinition of the original discretization nodes. We illustrate our procedure by means of two particular velocity nonholonomic integrators whose consistency properties are carefully studied in the case of simple mechanical systems.

We employ the developed techniques to treat the example of the nonholonomic particle in order to show how the discretizations may induce a perturbation of the original dynamics, particularly of the nonholonomic constraints. Our conclusions regarding this example can be summarized as follows: when we construct the velocity nonholonomic integrator redefining the nodes to preserve $D$ we obtain the maximum order of consistency achievable; when we preserve the original nodes  and perturb $D$ to some $D_{\epsilon}$ we loose one order of consistency; 
when we preserve the nodes and do not deform the distribution we loose one order of consistency; when we keep the original nodes and deform the constraints in order to preserve the original distribution in the numerical scheme we loose one order of consistency as well. This is an interesting phenomenon which we are going to consider in more detail and in a more general situation in future works.

Finally, in the light of the possible deformation of the distribution generated by discretizations, we extended the theory developed before to allow even nonlinear deformations of the constraints (consequently we are not dealing with a {\it distribution} anymore but with a smooth constraint {\it submanifold}).


\begin{thebibliography}{99}

 \let\\, \newcommand{\by}[1]{\textsc{\ignorespaces #1}\\}
  \newcommand{\vol}[1]{{\bf{\ignorespaces #1}}}
  \newcommand{\info}[1]{\textrm{\ignorespaces #1}.}

\bibitem{Bl} \by{Bloch AM} \title{``Nonholonomic Mechanics and
Control'',} \info{Interdisciplinary Applied Mathematics Series
\vol{24}, Springer-Verlag New-York, (2003)}



\bibitem{BlKrMaMu1996}
\by{Bloch AM, Krishnaprasad PS, Marsden JE and 
  Murray R}
\title{ ``Nonholonomic mechanical systems with symmetry,''}
\info{Arch. Rational Mech. Anal., {\bf 136(1)}, pp. 21--99, (1996)}




\bibitem{CCMD}
\by{Cantrijn F,  Cort{\'e}s J, de~Le{\'o}n M and  Mart{\'{\i}}n~de Diego D}
\title{ ``On the geometry of generalized {C}haplygin systems,''} 
\info{Math. Proc. Cambridge Philos. Soc., {\bf 132(2)}, pp. 323--351, (2002)}







\bibitem{Cort}
\by{Cort{\'e}s J}
\title{ ``Geometric, control and numerical aspects of nonholonomic
  systems'',} 
\info{Lecture Notes in Mathematics, {\bf 1793}, Springer-Verlag, Berlin, (2002)}

\bibitem{CoMa} 
\by{Cort\'es J and Mart\'{\i}nez E} 
\title{``Nonholonomic integrators'',} 
\info{Nonlinearity, \vol{14(5)}, pp. 1365--1392, (2001)}



\bibitem{FeZen}
\by{Fedorov YN and  Zenkov  DV}
\title{ ``Discrete nonholonomic {LL} systems on {L}ie groups,''}
\info{Nonlinearity, {\bf 18(5)}, pp. 2211--2241, (2005)}





\bibitem{SDD1}
\by{Ferraro S, Iglesias D and Mart\'in de Diego, D} 
\title{``Momentum and energy preserving integrators for nonholonomic dynamics'',}
\info{Nonlinearity, {\bf 21(8)}, pp. 1911--1928, (2008)}

\bibitem{SDD2}
\by{Ferraro S, Iglesias D and Mart\'in de Diego, D}
\title{ ``Numerical and geometric aspects of the nonholonomic SHAKE and RATTLE methods'',}
\info{Discrete and Continuous Dynamical Systems, {\bf Supplement}, pp. 220--229,  (2009)}



\bibitem{SJD}
\by{Ferraro S, Jim\'enez F and Mart\'in de Diego, D}
\title{``New developments on the Geometric Nonholonomic Integrator'',}
\info{Preprint: \url{http://arxiv.org/abs/1312.1587}. Submitted to {\it Nonlinearity}}

\bibitem{FiSch}
\by{Fielder B and Scheurle J}
\title{``Discretization of homoclinic orbits, rapid forcing and {\it invisible} chaos''}
\info{Memoirs of the American Mathematical Society, \vol{119(570)}, (1996)}

\bibitem{DAERheinboldt2}
\by{Fink JP and Rheinboldt WC}
\title{``On the Discretization Error of Parametrized Nonlinear Equations'',}
\info{SIAM Journal on Numerical Analysis, \vol{40(4)}, pp. 732--746, (1983)}


\bibitem{GeMar}
\by{Ge Z and Marsden JE}
\title{``Lie--Poisson integrators and Lie--Poisson Hamilton--Jacobi theory,''}
\info{Phys. Lett. A., {\bf 133}, pp. 134--139, (1988)}



\bibitem{Hair}  \by{Hairer E, Lubich C and Wanner G} \title{``Geometric Numerical  Integration,
Structure-Preserving Algorithms for Ordinary Differential
Equations'',} \info{Springer Series in Computational Mathematics,
{\bf 31}, Springer-Verlag Berlin, (2002)}

%
\bibitem{IMMM}
\by{Iglesias D, Marrero JC, Mart\'in de Diego and Mart\'inez E}
\title{``Discrete Nonholonomic Lagrangian Systems on Lie Groupoids'',}
\info{Journal of Nonlinear Sciences, \vol{18}, pp. 351-397, (2008)}

\bibitem{KFMD}
\by{Kobilarov M, Mart\'in de Diego D and Ferraro S}
\title{``Simulating Nonholonomic Dynamics'',} 
\info{ Bolet\'in de la Sociedad de Matem\'atica Aplicada SeMA,  {\bf 50}, pp. 61--81, (2010)}


\bibitem{Koiller}
\by{Koiller J}
\title{``Reduction of some classical nonholonomic systems with symmetry'',}
\info{Arch. Rational Mech. Anal., {\bf 118(2)}, pp. 113--148, (1992)}


\bibitem{Koon}
\by{Koon WS and Marsden JE}
\title{``Poisson reduction for nonholonomic mechanical systems with symmetry'',}
\info{Rep. Math. Phys., \vol{42}, pp. 101-134, (1998)}


\bibitem{Leite}
\by{H\"uper K and Silva Leite F}
\title{``On the geometry of rolling and interpolating curves on $S^n$, $SO_n$ and Grassman manifolds'',}
\info{Journal of Dynamical and Control Systems, {\bf 13(4)}, pp. 467--502, (2007)}

\bibitem{LD}
\by{de~Le{\'o}n  M and Mart{\'{\i}}n~de Diego D}
\title{``On the geometry of nonholonomic {L}agrangian systems'',} 
\info{J. Math. Phys., {\bf 37(7)}, pp. 3389--3414, (1996)}




\bibitem{McPer}
\by{McLachlan R and Perlmutter M}
\title{``Integrators for Nonholonomic Mechanical Systems.''}
\info{J. Nonlinear Science, \vol{16}, pp. 283--328, (2006)}


\bibitem{MarsdenWest}
  \by{Marsden JE and  West M}
  \title{``Discrete Mechanics and variational integrators''.}
  \info{Acta Numerica, \vol{10}, pp. 357--514, (2001)}



\bibitem{Mose}
\by{Moser J  and Veselov AP} 
\title{``Discrete versions of some classical
integrable systems and factorization of matrix polynomials''} 
\info{Comm. Math. Phys. {\bf 139}, pp. 217--243, (1991)}



\bibitem{GeomTreat}
\by{Rabier PJ and Rheinboldt WC}
\title{``A Geometric Treatment of Implicit Differential-Algebraic Equations'',}
\info{Journal of Differential Equations, \vol{109}, pp. 110-146, (1994)}

\bibitem{RigidBodyDAE}
\by{Rabier PJ and Rheinboldt WC}
\title{``Nonholonomic Motion of Rigid Mechanical Systems from a DAE Viewpoint'',}
\info{Society for Industrial and Applied Mathematics, (1987)}


\bibitem{Reich1}
\by{Reich S}
\title{``On an Existence and Uniqueness Theory for Nonlinear Differential-Algebraic Equations'',}
\info{Circuits, Systems and Signal Processing, \vol{9(4)}, (1990)}

\bibitem{Reich2}
\by{Reich S}
\title{``On a Geometrical Interpretation of Differential-Algebraic Equations''}
\info{Circuits, Systems and Signal Processing, \vol{10(3)}, pp. 343--359, (1991)}


\bibitem{DAERheinboldt1}
\by{Rheinboldt WC}
\title{``Differential-Algebraic Systems as Differential Equations on Manifolds'',}
\info{Mathematics of Computation, \vol{43(168)}, pp. 473--482, (1984)}




\end{thebibliography}
\end{document}